\theoremstyle{definition}
\newtheorem{thm}{Theorem}[section]
\newtheorem{lem}[thm]{Lemma}
\newtheorem{rem}[thm]{Remark}
\newtheorem{exm}[thm]{Example}
\newtheorem{algo}[thm]{Algorithm}
\numberwithin{equation}{section}
\newcommand{\abs}[1]{\left\vert#1\right\vert}
\begin{document}
	
	\title[A simple and fast FDM for  the variable-order fractional Laplacian]{A simple and fast finite difference method for  the integral fractional  Laplacian of  variable order}

	\author{Zhaopeng Hao}
	\address{School of Mathematics, Southeast University, Nanjing, China}
	\email{ zphao@seu.edu.cn; zhaopenghao2015@gmail.com}
	
	\author{Siyuan Shi}
	\address{School of Mathematics, Southeast University, Nanjing, China}
	\email{ mshisiyuan@163.com}

\author{Zhongqiang Zhang}
\address{Department of Mathematical Sciences, Worcester Polytechnic Institute, Worcester, Ma 01609 USA }
\email{zzhang7@wpi.edu}
 
	\author{Rui Du}
	\address{School of Mathematics, Southeast University, Nanjing, China}
	\email{(Corresponding author)   rdu@seu.edu.cn}

	\subjclass[2010]{ 35B65, 65M70, 41A25, 26A33}

	\date{\today}

	% REQUIRED
	
	\keywords{Nonlocal Laplacian, Pseudo-differential operator, Approximation Properties, Convergence, Fast solver }
	\date{\today}
	% REQUIRED
	\subjclass[2010]{ 65N06, 65N12, 65T50, 35R11, 26A33}
	
	% REQUIRED
	\begin{abstract}
	%	We propose a simple and easy-to-implement finite difference method for the multi-dimensional fractional Laplacian of variable order. We analyze the convergence of the discrete operator in the spectral Barron space and apply the finite difference approximation in the numerical solution for some nonlocal partial differential equations (PDEs).  Moreover, we present an efficient solver with quasi-linear complexity for the fast evaluation of the nonlocal and singular operator and associated fractional PDEs. Numerical examples show the accuracy and efficiency of our method.

For the fractional Laplacian of variable order, an efficient and accurate numerical evaluation in multi-dimension  is a challenge for the nature of a singular integral. We propose a simple and easy-to-implement finite difference scheme for the multi-dimensional variable-order fractional Laplacian defined by a hypersingular integral. We    prove that the scheme is of  second-order convergence and apply the developed finite difference scheme to solve various  equations with the variable-order fractional Laplacian. We  present a fast solver with quasi-linear complexity of the scheme for computing variable-order fractional Laplacian and corresponding PDEs. Several numerical examples  demonstrate  the accuracy and efficiency of our  algorithm and verify our theory.

 % {\color{blue}  For the fractional Laplacian with variable order, how to construct an efficient approximation is a challenging work. The aim of this paper is to propose a simple and easy-to-implement finite difference scheme for the variable order fractional Laplacian  in multi-dimension defined in singular- integral form, which is equivalent to  Fourier- type definition, based on the generating function. We prove the scheme is of  second-order convergence rigorously.  A fast solver with quasi-linear complexity  to efficiently compute the variable-order operator and related PDEs is presented.  We apply the developed finite difference scheme to solve various fractional PDEs with variable-order fractional Laplacian. Numerical experiments  show the accuracy and efficiency of our theory and algorithm. }
		
	\end{abstract}

	\maketitle
	
%
%\tableofcontents
 
	%%%%%%%%%%%%%%%%%%%%%%%%%%%%%%%%%%%%%%%%%%%%%%%%%%%%%%%%%%
	
	\section{Introduction}

 %	\textit{Why prefer the integral formulation?} 
	%Compared to the integer-order counterpart,  fractional models employ integral rather than differential operators, which allows them to relax   regularity constraints of conventional partial differential equations (PDEs) and to capture nonlocal effects arising from long-range forces at the microscale and mesoscale,  not accounted for by PDEs. Consequently, fractional models provide exceptional simulation fidelity for a broad spectrum of applications with limited regularity properties, such as fracture mechanics, image denoising, and phase field models in which jumps across lower-dimensional subsets and sharp transitions across interfaces(see Refs. \cite{Gatto-Hesthaven2015,Gunzburger-18,Laskin2000,Woyczynski2001}).

During the past few decades, fractional calculus has been extensively explored as a tool for developing more sophisticated but computationally tractable models. They can accurately describe complex physical phenomena, manifesting in long-range and nonlocal interactions, self-similar structures, sharp interfaces, and memory effects, which can not be adequately captured by  their integer counterpart.  
One of the most important fractional calculus tools is the fractional   Laplacian  of the  constant $\alpha$-th order,  defined 
as an singular integral \cite{bucur2016nonlocal} 
\begin{equation} 
(-\Delta)^{\alpha/2}u(x) :=  c_{d, \alpha} \text{P.V.} \int_{\mathbb{R}^d } \frac{u(x) - u(y)} {|x-y|^{d+\alpha}} \, \text{d}y ,\quad c_{d,\alpha } := \frac{2^{\alpha -1} \alpha \Gamma \big( (\alpha+d)/2 \big)}{\pi^{d/2} \Gamma \big( 1-\alpha/2 \big)} \label{def-c-int-frac-lap},
\end{equation} 
 with a normalized constant $c_{d,\alpha}$, 
 has been intensively investigated in the literature in the past two decades.   In particular, various applications have been found for the nonlocal operators  \eqref{def-c-int-frac-lap} such as fracture mechanics, image denoising, and phase field models in which jumps across lower-dimensional subsets and sharp transitions across interfaces (see \cite{Gatto-Hesthaven2015,Gunzburger-18,Laskin2000,Woyczynski2001}). 
 
 %Due to its rich physical   background and application-driven needs, the fractional Laplacian   has attracted increasing attention recently. Among these research, some scholars attempt to modify and  generalize the nonlocal operator by incorporating the spatial variability 

 \subsection{Motivation for  the variable-order  model}
%Although the fractional operator has achieved great success in the modeling certain physical phenomenon. However, it was pointed out that  
%
The constant-order operator \eqref{def-c-int-frac-lap} may be less sufficient for  the heterogeneous effect due to the spatial variability of complex medium.  To account  for  heterogeneity,  the variable-order operators depending on the spatial location variable  have been alternatively proposed. In this work, we consider the following  $\alpha(x)$-th  order fractional Laplacian
	\begin{equation}
		(-\Delta)^{\alpha(x)/2} u(x) :=  c_{d,\alpha(x)} \text{P.V.}  \int_{\mathbb{R}^d} \frac{ u(x)-u(y)} {|x-y|^{d+\alpha(x)}}\text{d}y,   \quad c_{d,\alpha (x)} := \frac{2^{\alpha(x) -1} \alpha(x) \Gamma \big( (\alpha(x)+d)/2 \big)}{\pi^{d/2} \Gamma \big( 1-\alpha(x)/2 \big)}.  \label{def-v-int-frac-lap}
	\end{equation}
 
 The variable-order extension  variable order  enables us to truly capture the non-smooth effects such as fractures by prescribing variable degree of smoothness across the scales. For example, the authors in \cite{Antil-Rautenberg-2019} advocated the variable-order operator to  perform regularization in image processing. They used large value of the order $\alpha(x)$ in the flat region and the smaller value in the neighborhood of  edges. 
In \cite{Giusti-2020,Darve-DElia-Garrappa-2022}, the authors pointed out that the usage of the variable order is partially motivated  from the theoretical study of galaxy rotation curves and  the dark matter.  Their study requires the field equation for the gravitational potential to become a variable-order fractional Poisson’s equation.
Another example is from the groundwater flow. The authors in \cite{Liu-Sun-Zhang-2019} justified that  the super-diffusion depends on spatial variables due to the local variation of aquifer properties. We also refer the interested readers to other  applications including  biology \cite{farquhar2018computational}, geophysics \cite{MuHWZ2021},  spatial statistics   \cite{Ruiz-MedinaAA2004} and so on. 
%The scaling rate or the displacement strength can be enhanced (when groundwater flow becomes more channelized along the solute particle trajectory) or decreased (when the surrounding abundant low-permeable deposits separate or change the direction of groundwater flow), depending on the plume position and architecture of the medium; see \cite{Liu-Sun-Zhang-2019}.
%  
%%% ??? this is 
%In addition to the above appealing applications in terms of  the modeling perspective, there is also mathematical structure consideration. 
%% ???

When restricted to  the bounded domain, 
equations
 with the fractional Laplacian of constant order as the leading operator usually have non-physical weak boundary singularity (boundary layer) unless boundary conditions are special. 
The  boundary layer is often undesirable for the real world applications. As a remedy, the authors in \cite{Zheng-Wang-2020} suggested to use the Caputo-type variable-order operator and analyzed a one-dimensional (1D) two-point boundary value problem. 
%Equations with the fractional Laplacian of variable order as the leading operator may not have  non-physical  singularity (weak singularity, namely, the first order derivative of the solution does not exist). 
 %
 In Section \ref{sec-num-experm}, we use a variable-order fractional Laplacian  requiring variable order $\alpha(x)=2$ along the boundary and observe in  Example \ref{Ex-known-solution}  
 that a full convergence order can be achieved on uniform meshes, which may suggest full regularity of solutions to such equations. 
 This observation may lead to  more practical fractional calculus in real applications.

\subsection{Related definitions of variable-order fractional Laplacian}
  Changing directly the constant-order  into the variable-order may increase not only the model capability but also  the complexity in the computation.  
  For example, one can  use  the variable-order function  $\alpha (x,y)$ in place of $\alpha(x)$ in \eqref{def-v-int-frac-lap}, which depends on both the spatial variable $x$ and the integration  variable $y$ in the singular integral in the recent work \cite{Marta-Glusa-2022,Ok2023}.   
  The wellposedness of corresponding fractional Poisson equations or other partial differential equations   can be straightforwardly established from the variational formulation. However,  the definition  does not  allow a simple Fourier transform of the fractional Laplacian as in the constant-order fractional Laplacian and Fourier-transformed based computational methods  are not readily applicable.

  The authors  in \cite{Darve-DElia-Garrappa-2022}  used the Fourier transform to extend the variable-order fractional Laplacian.  
  However, the  Fourier-type definition therein 
  (see also Remark \ref{rem:variable-fourier-v}) is different from the one considered in this work.  
  Here we prove  that the singular integral type definition \eqref{def-v-int-frac-lap} can be rewritten via Fourier transforms  but is different from the definition in \cite{Darve-DElia-Garrappa-2022}; see Theorem \ref{prop-equivalence-definitions} for the details.  One  desirable feature of their definition is that the  kernel is convolution-type but its explicit form is not available and have to be calculated through the forward and backward Fourier transforms.    
  
  Except for the above commonly used approaches to  define the variable-order  fractional Laplacian, one can also consider the spectral definition (see  \cite{YuZZZ-2022}) and harmonic-extension type definition (see \cite{Antil-Rautenberg-2019}). For the spectral definition,  the associated computation is straightforward when the eigen-values or eigen-functions are explicitly available.   In this paper, we will not go through every definition  but limit our attention into the singular-integral type \eqref{def-v-int-frac-lap}.     %and the derivation of the FDM in this paper does not apply. 

 \subsection{Literature review and research gap}
 %To fulfill  the great potentialities and wider applications of the variable-order Laplacian in the practice, the efficient computation  of the operator  has become of  paramount importance,  since 
 The close or explicit form of the $\alpha(x)$-th order fractional Laplacian of  functions seldom exists.  In this work, we aim for an efficient numerical evaluation
	for the fractional Laplacian of variable order \eqref{def-v-int-frac-lap} with its applications in  numerical solutions to nonlocal PDEs. 
  In 1D case, we note that the authors in \cite{ZhuangLAT2009} showed the equivalence between the Fourier-type definition and two-sided Riemann-Liouville derivatives  reformulation. Following upon their work, finite difference methods and fast solvers  have been developed for the  fractional Laplacian of coordinate-dependent type (see  \cite{Pang-Sun-2021,WangSLL2023}).  However, to the best of our knowledge,  
	we are not aware of any numerical results  in  2D/3D  in the literature due to the complexity of the operator discretization. 
  Even if one can extend the finite difference scheme such as the popular Grunwald approximation to compute 1D variable-order fractional operators, the rigorous convergence analysis is not provided yet.  The current research aims to fill this computational and theoretical  gap.

 %Even for the constant case, the numerical study of the fractional operators is still relatively new. 
 
 For the constant-order fractional Laplacian, some progress has been made and    extensive numerical methods have been developed among the computational mathematics community  particularly during the past around five years. 
 Finite or boundary  element methods are   developed in the context of the boundary integral equations, and they  can be straightforwardly extended to solve the integral equations involving the volume integral (see \cite{AnisworthG17}). However, it is extremely   difficult to implement the finite element methods (FEM)   particularly in 3D as it entails complicated numerical quadrature for the double integrals \cite{Feist-Bebendorf-2022}.  To avoid the expensive computation of the double integrals, meshless methods  or  collocation methods   have been developed. 
 For special domains such as a disk,  the accurate and efficient spectral method  using the eigen-functions was proposed in \cite{Xu-Darve-2020} and \cite{Hao-Li-Zhang-Zhang-2021}. For a general domain,  radial basis functions (RBF) methods using the standard Gaussian functions \cite{Burkard-Wu-Zhang-2021} and other generalized multi-quadratic functions have been developed. However,   RBF methods suffer from the notorious ill-condition issue which makes it harder for solving the large-scale size problem.    
 
 For the large-scale computation and easy implementation in multi-dimensions for fractional Laplacian of constant order, finite difference methods or collocation methods have been proposed recently. 
 Using the singularity subtraction technique, the authors in \cite{Minden-Ying-2020} proposed a simple solver via the Taylor expansion under the high-regularity assumption that the target function $u(x) \in C^6(\Omega)$ but the stability of the finite difference scheme is unknown. The authors in \cite {Duo-Zhang-2019} converted the fractional Laplacian with the strong singularity into the one with the weak singularity through the integration by parts, and then constructed a quadrature-based finite difference approximation. However, the stability estimates  were not provided and it is not clear if the associated finite difference scheme is stable  when applied to the numerical solution of PDEs involving the fractional Laplacian operator. The authors in \cite{Antil-Sinc-2021} presented the collocation method  based on the sinc basis functions  and rigorous analysis of the method was later provided in \cite{Antil-2023-Sinc}. %It should be pointed out that the Sinc basis method in \cite{Antil-Sinc-2021} is very close to our previous method in   \cite{Hao-Zhang-Du-2021} from the discretization point of view, and may also be extended to the variable-order case.
  Nonetheless, the  analysis  therein relies on the properties of Fourier transform  and cannot  be directly extended to the problem considered in this work.

 %no analysis of the method was not provided \cite{}. %and 
 %At the same time, we presented a new second-order approximation based on the generating function approximation theory \cite{Hao-Zhang-Du-2021}.  The associated finite difference scheme is simple and easy to implement and the proof of the stability estimates for the scheme  was given. The nice features of the approximation in \cite{Hao-Zhang-Du-2021} compared to the other two is that the computation of the coefficients in the finite difference approximation can be straightforwardly computed via the FFT and thus easy to implement. 

We remark that the Fourier transform of fractional Laplacian of constant-order allows designs of various and efficient numerical methods.  However, this is not the case for the variable-order fractional Laplacian as   the semi-group  and symmetry  properties cannot extend,  and   the explicit and elementary representation  of the Fourier transform of variable order does not exist.  Without these properties, straightforward extensions of previous working numerical methods are highly  nontrivial including various discretization methods such as spectral methods  in \cite{tang2018hermite,Tang-Wang-Yuan-Zhou-2020}, the fast finite element implementation \cite{Sheng-2023}, Dunford-Taylor reformulation based finite element methods and spectral methods and so on.

 		In   \cite{Hao-Zhang-Du-2021} we establish an easy-to-implement and efficient finite difference method based on the generating functions approximation theory. The computation of the coefficients or weights in  finite difference approximation can be  efficiently calculated through the fast discrete Fourier transform and a fast solver is implemented for the \textit{structured} resulting linear systems.  Along this line of research, we aim to extend the idea of simple finite difference discretization  in  \cite{Hao-Zhang-Du-2021} to the variable-order context, and  propose an efficient solver for the resulting \textit{unstructured} matrix induced by the non-convolution kernel in the variable-order operator \eqref{def-v-int-frac-lap}.

\subsection{Contributions and outline of this work}
 
The \textit{significance and novelty} of the paper is that it is the first work addressing a fast and accurate finite difference approximation  for the numerical evaluation of the  multi-dimensional fractional Laplacian of variable order \eqref{def-v-int-frac-lap}.  Our approximation is  robust and stable  in the sense that we can get a second-order approximation even for the nonsmooth piecewise variable-order function $\alpha(x)$ (see Tables \ref{Approx_VO_Lap_1D}, \ref{Approx_VO_Lap_2D} and \ref{Approx_VO_Lap_3D}).
 
 The approximation is derived from the discrete Fourier transform.  Although the explicit form of the Fourier transform for the variable-order case does not exist, we find that it can be defined through the Fourier transform as in the constant-order case.  Such equivalence allows us to discretize the singular operator in the frequency space which avoids the complicated quadrature. 
 
 Compared to the constant-order counterpart,  the variable-order definition in multi-dimensions uses the non-convolution kernel  and thus the corresponding discrete counterpart becomes non-Toeplitz, which brings up the extra computational difficulty regarding the efficient solver part. Nonetheless,  by the Fourier transformation and the low-rank approximation   of the Fourier symbol  $|\xi|^{\alpha(x)}$ (see Lemma \ref{lem-low-rank-approximation-fourier-symbol}), we are able to overcome the difficulty and construct  a fast solver  based on the observation that the unstructured resulting matrix can be approximately decomposed into the linear combination of the structured matrices. 
 
 Several numerical results are demonstrated to show  the accuracy and efficiency of the proposed scheme. Numerically, we  find that the appropriately  selected variable-order model can avoid the non-physical boundary singularity as in the constant-order model. 

In a nutshell, the main contributions of this work are described as follows. 
	\begin{itemize}
\item We  establish the equivalence and provide its proof  between Fourier-type definition and singular-integral form. 
\item  We  derive  a second-order  finite difference approximation for the  variable-order fractional Laplacian in multi-dimensions and present a rigorous convergence analysis. 
\item  We provide a fast solver with quasi-linear complexity  to efficiently compute the variable-order operator and related PDEs.  
		\item We apply the developed finite difference discretization to solve various fractional PDEs with variable-order fractional Laplacian and carry out the extensive numerical experiments to show the accuracy and efficiency of our theory and algorithm. 
	\end{itemize}

	We organize this paper as follows. In Section \ref{sec-FDM-approx}, we  present a fractional finite difference approximation based on the equivalence between the Fourier-type definition and singular integral form.  Moreover, we describe its implementation with  a quasilinear fast solver, and show  the accuracy and robustness of our approximation.   
  In Section \ref{sec-app-frac-ellip-eqn}, we apply the discrete approximation to the numerical solution for the nonlocal elliptic PDEs.   We carry out the extensive  numerical experiments and demonstrate the accuracy and efficiency of our scheme.  We  put the proofs of the theoretical results  in Section \ref{sec-proofs} for the interested readers, including  the second-order approximation and the convergence analysis for the  finite difference method in 1D case.  Finally, we conclude this work in Section \ref{sec-conclusion}.

%%%%%%%%%%%%%%%%%%%%%%%%%%%%%%%%%%%%%%%%%%%%%%%%%%%%%%%%%%%%%%%%%%%%%%%%%%%%%%%%%%%%%%%%%%%%%%%%	
	\section{ A second-order finite difference approximation }\label{sec-FDM-approx}

	In Subsection \ref{subsection-equvalence-with-fourier-transform}, we establish the equivalence   for  fractional-order Laplacian between singular integral form and  the Fourier transform. Such equivalent characterization provides us different perspectives and a starting point to design an efficient numerical method, the finite difference method via the semi-discrete Fourier transform which will be described in Subsection \ref{subsec-finite-difference-approx}. We describe the implementation for the calculation of the coefficients or weights associated with the approximation in Subsection \ref{subsection-impletation-coefficients} and the fast solver in Subsection \ref{subsec-implementation}. Finally we present a numerical example in Subsection \ref{subsection-numer-exeriments-approximation}. 

 %%%%%%%%%%%%%%%%%%%%%%%%%%%%%%%%%%%%%%%%%%%%%%%%%%%%%%%%%%%%%%%%%%%%%%%%%%%%%%%%%%%%%%%%%%%%%%%%
	\subsection{Variable-order fractional Laplacian defined via Fourier transform}\label{subsection-equvalence-with-fourier-transform}
 
	In our discussion, we assume that the variable-order function in the definition \eqref{def-v-int-frac-lap} is continuous and bounded,  that is $\alpha(x) \in C(\mathbb{R}^d)$,  and 
	\begin{equation}
		0<\alpha_{\min} \leq  \alpha (x)  \leq  \alpha_{\max} < 2. \label{assumption-alpha-function}
	\end{equation}
	
	The Fourier transform and its inverse are defined respectively as follows
	\begin{equation}
		\mathcal{F} [u] (\xi)= \int_{\mathbb{R}^d} e^{-\mathbf{i} \xi\cdot x } u(x)\text{d}x  \label{def-fourier-transform}	
	\end{equation}
	and 
	\begin{equation}
		\mathcal{F}^{-1} [ \hat{u}] (x)= \frac{1}{(2\pi)^d } \int_{\mathbb{R}^d} e^{\mathbf{i} \xi\cdot x } \hat{u}(\xi)\text{d}\xi.	\label{def-inv-fourier-transform}
	\end{equation}	
	where $ \mathbf{i}$ is the complex symbol $\mathbf{i}^2 =-1$. 
	Following the argument in the constant case as in \cite{bucur2016nonlocal}, 
	we show that the equivalence of  two definitions still holds in the variable-order context. 
	\begin{thm}[Equivalence of the definitions]\label{prop-equivalence-definitions}
		Let $u(x)\in C_b^2(\mathbb{R})$, which is the space of bounded and twice continuously differentiable functions.  The     $\alpha(x)$-th  variable-order fractional Laplacian is equivalent to the one defined in the Fourier transform, i.e., 
		\begin{equation}
			(-\Delta)^{\alpha(x)/2} u(x) = \mathcal{F}^{-1} \big[|\xi|^{\alpha(x)} \mathcal{F} [u] (\xi)\big](x), \label{def-v-frac-lap}
		\end{equation}
		when both sides of the equality  \eqref{def-v-frac-lap} are well-defined. 
	\end{thm}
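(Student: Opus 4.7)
The plan is to reduce the variable-order identity to the classical constant-order Fourier characterization by a pointwise \emph{freezing} argument. Fix an arbitrary point $x_0 \in \mathbb{R}^d$ and inspect the singular integral in \eqref{def-v-int-frac-lap}: the order $\alpha$ is evaluated only at the outer point $x$, not at the integration variable $y$. Hence
\begin{equation*}
(-\Delta)^{\alpha(x)/2} u(x)\big|_{x=x_0} = c_{d,\alpha(x_0)}\, \text{P.V.} \int_{\mathbb{R}^d} \frac{u(x_0)-u(y)}{|x_0-y|^{d+\alpha(x_0)}}\,\text{d}y,
\end{equation*}
which is exactly the \emph{constant}-order fractional Laplacian of order $\beta := \alpha(x_0)$ applied to $u$ and evaluated at $x_0$.

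Next, I would invoke the equivalence in the constant-order setting (Proposition 3.3 of \cite{bucur2016nonlocal}, or a direct Schwartz-class computation): for any fixed $\beta \in (0,2)$ and $u \in C_b^2(\mathbb{R}^d)$,
\begin{equation*}
c_{d,\beta}\, \text{P.V.} \int_{\mathbb{R}^d} \frac{u(x_0)-u(y)}{|x_0-y|^{d+\beta}}\,\text{d}y = \mathcal{F}^{-1}\!\left[|\xi|^{\beta} \mathcal{F}[u](\xi)\right](x_0).
\end{equation*}
Assumption \eqref{assumption-alpha-function} gives $\beta = \alpha(x_0) \in [\alpha_{\min},\alpha_{\max}] \subset (0,2)$, so this identity applies at $x_0$; substituting it into the previous display yields \eqref{def-v-frac-lap} at $x_0$. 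Since $x_0$ is arbitrary, the pointwise identity holds on all of $\mathbb{R}^d$.

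The proof is essentially a one-line reduction once one recognizes that the variable-order operator is, at each base point, merely a constant-order operator parametrized by that point. The only care required is that both sides be simultaneously well-defined. On the left, convergence of the principal-value integral follows from the $C_b^2$ assumption and $\alpha(x_0)<2$ via the standard near-field/far-field split (Taylor expansion of $u$ near $x_0$, boundedness of $u$ far from $x_0$). On the right, $\mathcal{F}[u]$ is in general only a tempered distribution, so the product $|\xi|^{\alpha(x_0)} \mathcal{F}[u](\xi)$ and its inverse Fourier transform at the single point $x_0$ must be interpreted distributionally; this is exactly the meaning of the ``when both sides are well-defined'' clause in the statement. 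I do not foresee a genuine obstacle here. The main bookkeeping is to phrase the constant-order identity so that the normalizing constant $c_{d,\beta}$ matches the convention of \eqref{def-c-int-frac-lap} uniformly in $\beta$ on compact subintervals of $(0,2)$; if one preferred a self-contained derivation to citing \cite{bucur2016nonlocal}, it would go through symmetrization $u(x_0)-u(y) = \tfrac{1}{2}(2u(x_0) - u(x_0+z) - u(x_0-z))$ after the substitution $y = x_0+z$, followed by the standard Fourier symbol calculation producing $|\xi|^{\beta}$, with a density argument from $\mathcal{S}(\mathbb{R}^d)$ to $C_b^2(\mathbb{R}^d)$ controlled by the near-field/far-field split above.
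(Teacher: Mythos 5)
Your proposal is correct and follows essentially the same route as the paper: the paper likewise freezes $x$, invokes the constant-order cosine identity $c_{d,\alpha(x)}\int_{\mathbb{R}^d}\frac{1-\cos(\xi\cdot y)}{|y|^{d+\alpha(x)}}\,\text{d}y=|\xi|^{\alpha(x)}$ from \cite{bucur2016nonlocal}, and passes through the symmetrized second difference $2u(x)-u(x+y)-u(x-y)$ with Fubini's theorem to conclude. The only cosmetic difference is that the paper writes out this constant-order symbol computation explicitly rather than citing it as a black box, which is exactly the self-contained variant you sketch at the end.
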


 \begin{rem}\label{rem:variable-fourier-v}
 In \cite{Darve-DElia-Garrappa-2022}, the authors proposed the Fourier-type fractional Laplacian as follows: 
\begin{eqnarray}
(-\Delta)^{\alpha(x)/2} u(x) :=  \mathcal{F}^{-1} \bigg[  \frac{  1  }{\mathcal{F} [K] } \mathcal{F} [u]   (\xi)\bigg](x), \quad  K(x) := \frac{ c_{d,-\alpha(x)} } {|x|^{d-\alpha(x)}}.  \label{def-darve-fourier}
\end{eqnarray}
It is clear that the two Fourier-type definitions are different since  the inverse of the Fourier transform of kernel function $K$ is the univariate function in variable $\xi$ which is not equal to the  function $|\xi|^{\alpha(x)}$. Note that the definition \eqref{def-darve-fourier} leads to the convolution type operator while the definition \eqref{def-c-int-frac-lap} is the type of non-convolution. 
 \end{rem}

	The equivalence of the definitions inspires us to discretize the operator in the Fourier or frequency spaces rather than the direct discretization   in the physical space using the complicated finite difference-quadrature.

	%%%%%%%%%%%%%%%%%%%%%%%%%%%%%%%%%%%%%%%%%%%%%%%%%%%%%%%%%%%%%%%%%%%%%%%%%%%%%%%%%%%%%%%%%%%%%%%%%%%%%%%%%%%%%%%%%%%
	
	\subsection{A second-order finite difference approximation}\label{subsec-finite-difference-approx}	
	
	Denote $h$ as the spatial stepsize of the grids.	Let  $h \mathbb{Z}^d$ be the set of infinite grids, i.e., 
	\begin{equation}
		h\mathbb{Z}^d=\{jh: j=(j_1,j_2,\cdots, j_d) \in \mathbb{Z}^d  \}. \label{def-grid-points-set}
	\end{equation}
	Throughout the paper, the summation index $j$ is multi-index, so is the index $k$. 
	
	For a function $u_h: h \mathbb{Z}^d \longrightarrow \mathbb{R}$, define the semi-discrete Fourier transform as 
	\begin{equation}
		\mathcal{F}_h [u_h] (\xi)= h^d \sum_{x\in h \mathbb{Z}^d } e^{- \mathbf{i}\xi \cdot x } u_h(x). \label{def-discrete-fouier-transform}
	\end{equation}	
	Denote the parameterized domain $D_h=\big[ -\frac{\pi}{h}, \frac{\pi}{h}\big]^d$. 	Then we have the discrete Fourier inversion formula
	\begin{equation} 
		u_h(x) = \mathcal{F}_h^{-1} \big[ \mathcal{F}_h [u_h] \big ] (x) =\frac{1}{(2\pi)^{d}} \int_{D_h} e^{\mathbf{i }\xi \cdot x } \mathcal{F}_h [u_h](\xi) \text{d}\xi. \label{def-inverse-discrete-transform}
	\end{equation}

	For a continuous function $u(x)$, let $u_h (x)= u (x)$ for $ x\in h\mathbb{Z}^d $. 	Inspired by the idea for 1D in \cite{huang2016finite} and our previous work \cite{Hao-Zhang-Du-2021} for multi-dimensional cases, we propose the discrete variable-order Laplacian operator as follows: 
	\begin{equation}
		(-\Delta_h)^{\alpha(x)/2} u(x) : = \mathcal{F}_h^{-1} \big[ M_h(\xi) ^{ \alpha (x) /2} \mathcal{F}_h [u_h] (\xi) \big](x)  \label{def-discrete-frac-lap}
	\end{equation}
	with  
	\begin{equation}
		M_h(\xi) := \sum_{p=1}^d \frac{4}{h^2} \sin^2 \big(\frac{\xi_p h}{2}\big). \label{def-multiplier}
	\end{equation}

Introduce the following spectral Barron space (see \cite{EMW2022, MengM2022})
	\begin{equation}
		\mathcal{B}^s(\mathbb{R}^d):= \{ u\in L^1(\mathbb{R}^d):  \int_{\mathbb{R}^d}( 1+|\xi|^{s}) |\mathcal{F}[u](\xi)| \text{d}\xi < \infty \}.
	\end{equation}
	and the induced norm $\|\cdot \|_{	\mathcal{B}^s(\mathbb{R}^d)} $. It is clear that  $\|u \|_{	\mathcal{B}^s(\mathbb{R}^d)} \leq c  \|u \|_{	\mathcal{B}^t(\mathbb{R}^d)}$ for $s<t$.

	For any grid function $v_h$ on $\Omega_h$, define the discrete maximum norm as $$ \|v_h\|_{L_h^\infty}= \max_{x\in \Omega_h} \{|v_h(x)|\}. $$
	
	We state the main theoretical result in this work.  
	\begin{thm}[Approximation properties]\label{thm-approximation-property}
		Suppose $\alpha (x)$ satisfy the assumption \eqref{assumption-alpha-function}. 
		Let $u \in \mathcal{B}^{s+\alpha_{\max}} (\mathbb{R}^d)$ with $s \leq 2$.  For the discrete fractional Laplacian operator defined in \eqref{def-discrete-frac-lap} and $x\in \Omega_h$, it holds that
		\begin{equation}
			\|(-\Delta)^{\alpha(x)/2} u(x)- (-\Delta _h)^{\alpha(x)/2} u_h(x)\|_{L_h^\infty} \leq c   h^{s} \|u\|_{ \mathcal{B}^{s+\alpha_{\max}}}.  \label{thm-convergence-order}
		\end{equation}
	\end{thm}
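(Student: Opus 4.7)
The plan is to reduce both sides to inverse Fourier integrals via Theorem~\ref{prop-equivalence-definitions} and the definition~\eqref{def-discrete-frac-lap}, and then split the pointwise error at a fixed grid point $x$ into an aliasing part and a symbol-approximation part. For each fixed $x$ the exponent $\alpha(x)$ is merely a number, so the analysis reduces to the constant-order case pointwise in $x$; the genuinely new task is to keep every constant uniform over $\alpha(x)\in[\alpha_{\min},\alpha_{\max}]$. By the Poisson summation formula, $\mathcal{F}_h[u_h](\xi)=\sum_{k\in\mathbb{Z}^d}\mathcal{F}[u](\xi+2\pi k/h)$ on $D_h$ (absolutely convergent because $|\xi|^{s+\alpha_{\max}}\mathcal{F}[u]\in L^1$), so that
\[(-\Delta)^{\alpha(x)/2}u(x) - (-\Delta_h)^{\alpha(x)/2}u_h(x) = E_1 + E_2 - E_3,\]
where $E_1=(2\pi)^{-d}\int_{D_h} e^{\mathbf{i}\xi\cdot x}[|\xi|^{\alpha(x)}-M_h(\xi)^{\alpha(x)/2}]\mathcal{F}[u](\xi)\,\text{d}\xi$ is the symbol error on the fundamental cell, $E_2=(2\pi)^{-d}\int_{\mathbb{R}^d\setminus D_h} e^{\mathbf{i}\xi\cdot x}|\xi|^{\alpha(x)}\mathcal{F}[u](\xi)\,\text{d}\xi$ is the continuous tail, and $E_3=(2\pi)^{-d}\int_{D_h} e^{\mathbf{i}\xi\cdot x}M_h(\xi)^{\alpha(x)/2}\sum_{k\neq 0}\mathcal{F}[u](\xi+2\pi k/h)\,\text{d}\xi$ is the aliasing contribution.

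Each piece is then estimated by a tailored Fourier-symbol inequality combined with the Barron tail bound $\int|\xi|^{\alpha_{\max}+s}|\mathcal{F}[u](\xi)|\,\text{d}\xi \leq \|u\|_{\mathcal{B}^{s+\alpha_{\max}}}$. For $E_1$, I would start from the elementary inequality $0\leq t^2-(4/h^2)\sin^2(th/2)\leq C h^s|t|^{2+s}$ valid for $|th|\leq\pi$ and $s\leq 2$, which summed over coordinates gives $0\leq|\xi|^2-M_h(\xi)\leq C h^s|\xi|^{2+s}$ on $D_h$; applying the mean-value inequality $|a^{\alpha/2}-b^{\alpha/2}|\leq(\alpha/2)(\max\{a,b\})^{\alpha/2-1}|a-b|$ with $a=|\xi|^2$, $b=M_h(\xi)$, then yields $||\xi|^{\alpha(x)}-M_h(\xi)^{\alpha(x)/2}|\leq C h^s|\xi|^{\alpha(x)+s}$. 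For $E_2$, on $\mathbb{R}^d\setminus D_h$ we have $\max_p|\xi_p|\geq\pi/h$, so $|\xi|^{\alpha(x)}\leq(h/\pi)^s|\xi|^{\alpha_{\max}+s}$. For $E_3$, I would change variables $\eta=\xi+2\pi k/h$ and use the $2\pi/h$-periodicity $M_h(\xi)=M_h(\eta)$; since the translated cells $D_h+2\pi k/h$, $k\neq 0$, tile $\mathbb{R}^d\setminus D_h$, the sum-integral collapses to a single integral over this region, on which the trivial bound $M_h(\eta)\leq 4d/h^2\leq(4d/\pi^2)|\eta|^2$ gives $M_h(\eta)^{\alpha(x)/2}\leq C|\eta|^{\alpha(x)}$, reducing $E_3$ to the same type of tail bound as $E_2$.

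The main obstacle is keeping every constant uniform in $x$, i.e., uniform in $\alpha(x)\in[\alpha_{\min},\alpha_{\max}]$. The prefactor $\alpha/2$ in the mean-value inequality is harmlessly bounded by $\alpha_{\max}/2$, and wherever $|\xi|^{\alpha(x)}$ must be absorbed into the Barron norm the monotonicity of the Barron scale in its index allows us to majorise $\alpha(x)$ by $\alpha_{\max}$ without loss; likewise, the implicit constant in the symbol bound for $E_1$ is easily made independent of $\alpha(x)$ by monotonicity. A secondary technicality is the interchange of the Poisson sum with the outer integral in $E_3$, which is justified by Fubini once $|\xi|^{s+\alpha_{\max}}|\mathcal{F}[u](\xi)|\in L^1(\mathbb{R}^d)$ is observed. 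Adding the three bounds $|E_1|,|E_2|,|E_3|\leq C h^s\|u\|_{\mathcal{B}^{s+\alpha_{\max}}}$ and taking the supremum over $x\in\Omega_h$ yields the desired estimate~\eqref{thm-convergence-order}.
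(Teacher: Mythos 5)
Your proposal is correct and follows essentially the same route as the paper: the identical three-term splitting into a symbol error on $D_h$, a high-frequency tail, and an aliasing term controlled by the Poisson summation formula, with each piece absorbed into the Barron norm $\|u\|_{\mathcal{B}^{s+\alpha_{\max}}}$ after extracting a factor $h^s$. The one substantive difference is to your credit: your $\xi$-dependent symbol estimate $\bigl||\xi|^{\alpha(x)}-M_h(\xi)^{\alpha(x)/2}\bigr|\le Ch^{s}|\xi|^{\alpha(x)+s}$ is the form actually needed to bound the paper's $I_2$, whereas the bound stated in \eqref{def-multiplier-equivalence} is written as a uniform $O(h^2)$ on $D_h$, which is not literally valid near $|\xi|\sim\pi/h$ and should be read as your version.
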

We postpone the proof of second-order approximation properties and put it in the section \ref{sec-proofs}. 
%%%%%%%%%%%%%%%%%%%%%%%%%%%%%%%%%%%%%%%%%%%%%%%%%%%%%%%%%%%%%%%%%%%%%%%%%%%%%%%%%%%%%%%%%%%%%%%%
 \subsection{Implementation}\label{subsection-impletation-coefficients}
	
	In the practical computation, we want to numerically evaluate the operator at the grid points,  that is,  $x=x_j=jh\in \Omega_h$.  Thus, we have 
	\begin{eqnarray}
		(-\Delta_h)^{\alpha(x_j)/2} u(x_j) & =& \frac{1}{(2\pi)^d} \int_{D_h} e^{\mathbf{i} \xi x_j} M_h(\xi)^{\alpha(x_j)/2} \bigg( h^d\sum_{x_k\in \Omega_h}e^{- \mathbf{i}\xi  x_k} u(x_k) \bigg) \text{d}\xi \notag\\
		&=& h^d \sum_{x_k\in \Omega_h} \bigg( \frac{1}{(2\pi)^d} \int_{D_h} e^{- \mathbf{i} \xi \cdot (x_k-x_j)}  M_h(\xi)^{\alpha(x_j)/2}   \text{d}\xi \bigg) u(x_k) \notag\\
		&=&  \frac{1}{h^{\alpha(x_j)} } \sum_{x_k\in \Omega_h} a_{k-j}^{ (\alpha(x_j))} u(x_k) , \label{def-quasi-convolution-approximation}
	\end{eqnarray}	
	where the weights 
	\begin{eqnarray}
		a_{k-j}^{(\alpha(x_j))} :=	 \frac{1}{(2\pi)^d} \cdot h^{d+\alpha(x_j) } \int_{D_h} e^{- \mathbf{i} \xi \cdot (x_k-x_j)}  M_h(\xi)^{\alpha(x_j)/2}   \text{d}\xi .
	\end{eqnarray}
	Making a change of variable $\eta=h \xi$ leads to 
	\begin{eqnarray}
		a_{k-j}^{(\alpha(x_j))} =	 \frac{1 }{(2\pi)^d} \int_{[-\pi,\pi]^d} e^{- \mathbf{i}\eta \cdot (k-j)}  \bigg(\sum_{p=1}^d 4\sin\big (\frac{\eta_p}{2}\big)^2\bigg) ^{\alpha(x_j)/2}   \text{d}\eta. \label{def-a_k_alpha}
	\end{eqnarray}
	In the practical computation, for each fixed grid point $x_j$, we need to compute  the coefficients $	a_{n}^{(\alpha (x_j))}$ for $ |j_p| \leq N_p$ for $p=1,\dots,d$  (Here $n=k-j$ and $N_p$ denotes the number of grid points in $x_p$ direction. For the sake of simplicity, let us assume they all equals to $N$).
	
	\iffalse
	\begin{eqnarray}
		a_{k}^{(\alpha (x_j))} =	 \frac{1 }{(2\pi)^d} \int_{[-\pi,\pi]^d} e^{- \mathbf{i}\eta \cdot k}  \bigg(\sum_{\nu=1}^d 4\sin\big (\frac{\eta_\nu}{2}\big)^2\bigg) ^{\alpha(x_j)/2}   \text{d}\eta.
	\end{eqnarray}
	\fi
	To alleviate the burden of  tedious notations, we drop the dependence on the variable $x_j$  and abbreviate it as $a_{n}^{(\alpha_j )}$ without the confusion.

	In 1D, the analytical expression of the above integrals can be derived.  In fact,  recall the formula \cite[Page 399, 3.631.9]{gradshteyn2014table}
	\begin{eqnarray}
		\int_0^{\pi/2} \cos ^{\nu-1 }(  x)  \cos(ax) \text{d}x = \frac{\pi}{ 2^\nu \nu  B(\frac{\nu+ a+1}{2} , \frac{\nu-a+1}{2} )} ,\quad  %\mbox{Re}
		\nu >0. \label{table-of-integrals-formula}
	\end{eqnarray}
	Here $B(\cdot, \cdot)$ denotes the Beta function. 
	Take $\nu=\alpha_j+1$, $a= 2n$ and make a change of variable $x= \pi/2-t$. Then the formula \eqref{table-of-integrals-formula} reduces to 
	\begin{eqnarray}
		(-1)^n	\int_0^{\pi/2} \sin ^{\alpha_j }(  x)  \cos(2nt) \text{d}t &=& \frac{\pi}{ 2^{\alpha_j+1} (\alpha_j+1)  B(\frac{\alpha_j+1+ 2n+1}{2} , \frac{\alpha_j+1-2n+1}{2} )} \notag\\ 
		&=& \frac{\pi \Gamma(\alpha_j+1)}{ 2^{\alpha_j+1} \Gamma( \frac{\alpha_j}{2} +n+1) \Gamma ( \frac{\alpha_j}{2} -n+1)}.
	\end{eqnarray}

	Due to the symmetry of the integrand, the coefficients \eqref{def-a_k_alpha} can be reduced to 
	\begin{eqnarray}
		a_{n}^{(\alpha_j)} &:=&	 \frac{2^{\alpha_j} }{\pi} \int_0^\pi    \sin^{\alpha_j}(\eta/2) \cos(k \eta  )   \text{d}\eta \notag\\
   &=& \frac{2^{\alpha_j+1}}{\pi} \int_{0}^{\pi/2} \sin ^{\alpha_j}( t)  \cos(2n t)  \text{d}t  \notag\\
		&= &\frac{  (-1)^n \Gamma( \alpha_j +1) } {\Gamma( \frac{\alpha_j}{2} +n+1) \Gamma ( \frac{\alpha_j}{2}-n+1)}.
	\end{eqnarray}
	
	In multi-dimension cases, since the explicit formula is difficult to derive, we will use the standard numerical integration to compute the integrals. Specifically,  take an integer number $M>\max\{ N_p, \, p=1,...d\}$ (here $N_p$ denote the number of grid points in $x_p$ direction) and step-size  $ \delta= 2\pi/M$. Denote  $\varphi(\eta)= [\sum_{p=1}^d 4\sin^2(\frac{\eta_p}{2}) ) ]^{\alpha_j/2}$. Applying the trapezoidal rule, we have 
		\begin{eqnarray}\label{numerical-quadrature-coefficients}
			a^{(\alpha_j)}_{n} &=& \frac{1}{(2\pi)^d}\int_{[-\pi,\pi]^d} \varphi(\eta) e^{-\mathbf{i} (\sum_{p=1}^dn_p\eta_p)}\text{d}\eta  \notag\\
			&\approx& \frac{1}{M^d} \sum_{m_1=0}^{M-1} \sum_{m_2=0}^{M-1} \cdots \sum_{m_d=0}^{M-1}\varphi(m_1\delta,m_2\delta,\cdots,m_d\delta) e^{-\mathbf{i} \sum_{p=1}^d(m_pn_p\delta)} \notag\\
   &:=&  	\tilde{a}^{(\alpha_j)}_{n}.
		\end{eqnarray}
		With the expression above we can use Matlab built-in function {`\sf{ifftn}'} to compute the coefficients 	$\tilde{a}^{(\alpha_j)}_{n}$ for $ 0\leq n_p\leq M-1$ efficiently with the computational cost $ \mathcal{O}(M^d\log^d M)$.  
  \iffalse
  \begin{rem}
    To achieve the second-order convergence in the computation, it suffices to take the optimal number $M$ as a constant multiple of $N$. In this case, the approximation error will not be polluted by the quadrature error.    See our previous work \cite{Hao-Zhang-Du-2021} for detailed discussion.  
  \end{rem}
 \fi

  	%%%%%%%%%%%%%%%%%%%%%%%%%%%%%%%%%%%%%%%%%%%%%%%%%%%%%%%%%%%%%%%%%%%%%%%%%%%%%%%%%%%%%%%%%%%%%%%%%%%%%%%%%%%%%%%%%%%%%%%%%%
		
		\subsection{ Fast computation of the discrete variable-order Laplacian } \label{subsec-implementation}
		
In this subsection we present a fast solver for the numerical evaluation of the fractional Laplacian.  In general, we are interested in the numerical evaluation of the target function $u(x)$ restricted in the bounded computational domain $\Omega$. Suppose that $\Omega$ is a square (or a rectangular domain if we take a nonisotropic different mesh size  in each spatial direction in the last subsection). If not, we can embed the general bounded domain $\Omega$  into a larger square (or rectangular) domain $\tilde{\Omega}$, consider the evaluation on the  domain $\tilde{\Omega}$ instead and do the restriction in the domain $\Omega$.   The following lemma is a key to 	develop a fast algorithm.

		\begin{lem}\label{lem-low-rank-approximation-fourier-symbol}
			For any tolerance $\epsilon>0$, there exists integer $r \approx \log \epsilon $ such that 
			\begin{equation}
				|	M(\xi)^{\alpha(x)}  - \sum_{q=1}^r L_q(\alpha(x)) M(\xi)^{\overline{\alpha}_q}| \leq \epsilon, 
			\end{equation}
			where  $\overline{\alpha}_q$'s are Chebyshev points in the interval $(\alpha_{\min}, \alpha_{\max})$ and $L_q(\cdot)$'s are associated Lagrange polynomials. 
		\end{lem}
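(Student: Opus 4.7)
The plan is to recognize the sum $\sum_{q=1}^r L_q(\alpha(x)) M(\xi)^{\overline{\alpha}_q}$ as the Lagrange interpolation polynomial, in the variable $\alpha$, of the map $\alpha \mapsto M(\xi)^{\alpha}$ at the Chebyshev nodes $\overline{\alpha}_1,\ldots,\overline{\alpha}_r$, evaluated at $\alpha=\alpha(x)$. With this identification the lemma becomes a statement about the rate of convergence of Chebyshev interpolation of an analytic function, and the dependence on $x$ and $\xi$ decouples cleanly: the interpolation is performed in $\alpha$ for each frozen $\xi$, and $x$ only enters through the coefficients $L_q(\alpha(x))$.

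First I would fix $\xi$ and set $\phi_\xi(\alpha) := M(\xi)^{\alpha}$ on $[\alpha_{\min},\alpha_{\max}]$. Writing $\phi_\xi(\alpha) = \exp(\alpha \log M(\xi))$ for $M(\xi)>0$ shows that $\phi_\xi$ extends to an entire function of the complex variable $\alpha$; the degenerate case $M(\xi)=0$, which occurs only at $\xi=0$ in $D_h$, is trivial since $\alpha_{\min}>0$ forces both sides to vanish. Next I would invoke the classical Bernstein-type error estimate for Chebyshev interpolation: for any $\rho>1$,
$$\max_{\alpha \in [\alpha_{\min},\alpha_{\max}]} \Bigl|\phi_\xi(\alpha) - \sum_{q=1}^r L_q(\alpha)\,\phi_\xi(\overline{\alpha}_q)\Bigr| \leq \frac{C\rho^{-r}}{\rho-1} \max_{z\in E_\rho}|\phi_\xi(z)|,$$
where $E_\rho$ is the Bernstein ellipse associated with $[\alpha_{\min},\alpha_{\max}]$ and $C$ is absolute.

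The remaining task is to control $\max_{z\in E_\rho}|\phi_\xi(z)|$ uniformly in $\xi$. On $E_\rho$ one has $|\mathrm{Re}\,z| \leq K_0\rho$ for a constant $K_0 = K_0(\alpha_{\min},\alpha_{\max})$, so $|\phi_\xi(z)| \leq \max(M(\xi), M(\xi)^{-1})^{K_0\rho}$. Using the uniform bound $M(\xi) \leq 4d/h^2$ on $D_h$ together with a separate treatment of the neighborhood of $\xi=0$ (where $\phi_\xi(\alpha)\to 0$ continuously in $\alpha$), one obtains an error bound of the form $C(h,d,\alpha_{\min},\alpha_{\max})\,\rho^{-r} A^{K_0\rho}$ for an explicit constant $A$. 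Optimizing over $\rho$ yields exponential decay in $r$, and choosing $r = \mathcal{O}(\log(1/\epsilon))$ drives the error below $\epsilon$.

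The main obstacle is the uniformity in $\xi$, since the derivative $\partial_\alpha^k \phi_\xi = (\log M(\xi))^k M(\xi)^\alpha$ blows up both as $M(\xi)\to 0$ and as $M(\xi)\to\infty$. Overcoming this requires exploiting the compactness of $D_h$ (giving the upper bound on $M(\xi)$) together with the vanishing of $\phi_\xi$ near $\xi=0$ (which handles the lower side). An alternative and perhaps cleaner route is to reparametrize so that $\alpha$ is mapped to $[-1,1]$ and then interpret the claim as low-rank approximation of the bivariate function $(x,\xi)\mapsto M(\xi)^{\alpha(x)}$, for which analyticity in the first argument gives the same exponential convergence rate by the standard Chebyshev theory.
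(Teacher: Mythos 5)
Your identification of the sum as the Lagrange interpolant, in the exponent variable $t$, of $t\mapsto a^{t}$ (with $a=M(\xi)$ frozen) at the Chebyshev nodes is exactly the paper's starting point, so the core idea coincides; where you diverge is in the interpolation error estimate you invoke. The paper uses the elementary derivative-form remainder for interpolation at Chebyshev points, $|a^{t}-\sum_{q}L_q(t)a^{\overline{\alpha}_q}|\le |f^{(r)}(\theta)|\,[2^{r-1}r!\,(r-1)]^{-1}$ with $f^{(r)}(\theta)=(\ln a)^{r}a^{\theta}$, whereas you use the Bernstein-ellipse bound for analytic functions and then optimize over $\rho$; by Stirling the two give essentially the same super-exponential decay in $r$, so both routes are valid. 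What your version buys is an honest confrontation with the uniformity in $\xi$, which the paper's one-line proof glosses over: its constant $(\ln a)^{r}a^{\theta}$ with $a=M(\xi)\in[0,4d/h^{2}]$ degenerates both as $M(\xi)\to 0$ and as $h\to 0$, so the admissible $r$ in fact depends (logarithmically) on $h$, and you at least flag this and sketch how to treat the two regimes. One caveat on your side: near $\xi=0$ you should not lean on the Bernstein ellipse at all, since for large $\rho$ the ellipse contains points with $\mathrm{Re}\,z<0$ and then $|M(\xi)^{z}|=M(\xi)^{\mathrm{Re}\,z}$ blows up as $M(\xi)\to 0$; the clean fix is to note that for small $M(\xi)$ both $M(\xi)^{\alpha(x)}$ and every $M(\xi)^{\overline{\alpha}_q}$ are bounded by $M(\xi)^{\alpha_{\min}}$ while the Lebesgue constant of Chebyshev interpolation is only $O(\log r)$, so the quantity to be estimated is already below $\epsilon$ there without any interpolation argument.
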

		\begin{proof}
			Consider the univariant function $f(t)=a^t$ for $a>0$ and $t\in (\alpha_{\min}, \alpha_{\max})$. By the Lagrange interpolation associated with the Chebyshev points (see \cite{KLOKOV198793}), we have 
			\begin{equation}
				|	a^{t}  - \sum_{q=1}^r L_q(t) a^{\overline{\alpha}_q}|< |f^{(r)} (\theta)| [2^{r-1} r! (r-1) ]^{-1} <| (\ln  a)^r a^\theta| [2^{r-1} r! (r-1) ]^{-1} \leq \epsilon. 
			\end{equation}
			Here $\theta \in (0,1)$.
				Then replace $a$ with $M(\xi)$, and the exponent $t$ by $\alpha(x)$, respectively, which immediately leads to the desired result. 
		\end{proof}

		As direct consequence of the above lemma, we can use the superposition principle,  the linear combination  of the constant-order discrete fractional operators to approximate  the variable-order fractional Laplacian to achieve a fast computation. That is 
  \begin{equation}
(-\Delta_h)^{\alpha(x)/2} \approx \sum_{q=1}^r \mbox{diag}(L_q(\alpha(x)) ) (-\Delta_h)^{\overline{\alpha}_q/2}.
  \end{equation}

Based on the above discussion, we describe our algorithm for the numerical evaluation as follows: 

\begin{algo}
For the fractional Laplacian of the variable order, we have the following computation. 
  	\begin{itemize}
			\item Step 1,  generate the Chebyshev points $\overline{\alpha}_q \in (\alpha_{\min},\alpha_{\max})$, and evaluate Lagrange polynomials $ L_q(\alpha (x_j) )$ for $q\leq  r$ and $ x_j\in \Omega_h$ with $j_p=1,\dots,N$ for $p=1,\dots,d$. 
			
			\item Step 2, for each fixed constant-order $\overline{\alpha}_q$,  compute $(-\Delta_h)^{\overline{\alpha}_q/2} u(x_j) $ for $x_j\in \Omega_h$, and then multiply the diagonal matrix  $ L_q(\alpha(x_j)) $.
			
			\item Step 3, add them together over $q=1,2\cdots r$. 
		\end{itemize}
		
	\end{algo}	
		Note that, in Step 2,  we refer the readers to  \cite{Hao-Zhang-Du-2021} for  implementation details.

%%%%%%%%%%%%%%%%%%%%%%%%%%%%%%%%%%%%%%%%%%%%%%%%%%%%%%%%%%%%%%%%%%%%%%%%%%%%%%%%%%%%%%%%%%%%%%%%	
\subsection{Numerical experiments}\label{subsection-numer-exeriments-approximation}

In the following example, we take the Gaussian  function as our test function to illustrate the accuracy  of the theoretical result.  
 
		\begin{exm}[Approximations of variable-order fractional Laplacian in multi-dimensions]\label{exm-approximation-function}
			Consider the Gaussian function $u(x)=\exp(-|x|^2)$ with a truncated computational domain $[-4,4]^d$, where $d$ is for the dimension and $x=(x_1,x_2,\dots,x_d)^{\top}$. 
		\end{exm}
		
		In this experiment, the variable-order Laplacian of the Gaussian function is given as (see the Appendix for the derivation)
		\begin{eqnarray}
			\label{exact_solution}
			(-\Delta)^{\alpha(x)/2} u = (-\Delta)^{\alpha(x)/2}[\exp(-|x|^2)]=\frac{2^{\alpha(x)}\Gamma((d+\alpha(x))/2)}{\Gamma(d/2)}{_1F_1}((d+\alpha(x))/2;d/2;-|x|^2).
		\end{eqnarray}	
		 The error is measured  as $E_\infty(h) = ||(-\Delta_h)^{\alpha(\cdot)/2}u-(-\Delta)^{\alpha(\cdot)/2} u ||_{L_h^\infty}$,
		and the convergence order is estimated by $\log_2({E_\infty(2h)}/{E_\infty(h)})$.
		
		Note that the function $u(x)$ is smooth and satisfies the regularity requirement in   Theorem \ref{thm-approximation-property}. Here we examine the effect of the variable order $\alpha(x)$ on the accuracy of the finite difference approximation.  Tables \ref{Approx_VO_Lap_1D}-\ref{Approx_VO_Lap_3D} show the $\operatorname{\mathit{L_h^\infty}-\mathit{norm}}$ errors and convergence orders for different $\alpha(x)$ in 1D/2D/3D,  respectively. Three different $\alpha(x)$ with different range and smoothness are tested.  From the tables, we observe that our approximation is of second order, which agrees with the predicted convergence order in Theorem \ref{thm-approximation-property}. 
  In particular, the approximation is very robust in the sense that we are still able to get the second-order convergence even for the non-smooth piece-wise function $\alpha(x)$. 
  
       % Specifically, from the first and second column of Table \ref{Approx_VO_Lap_3D}, we observe that second convergence order is reached faster when the order $\alpha(x) = 1+0.9\tanh{|x|}\in (1,2)$ than when the order $\alpha(x) = 1-0.9\tanh{|x|}\in (0,1) $ 
     %    {\color{red}since the stiffness matrix from larger $\alpha(x)$ has bigger conditional number and it will affect the accuracy of approximation.}
      %  Furthermore, the last column of Table \ref{Approx_VO_Lap_1D}-\ref{Approx_VO_Lap_3D} shows that our finite difference approximation also applies for piecewice function.

		\begin{table}[h]
			\centering
			\caption{The errors and convergence orders for the approximation to  $(-\Delta)^{\alpha(x)/2}u(x)$ in 1D (Example \ref{exm-approximation-function}).  Here  $\alpha_1(x)\in (0.1,1)$ and $\alpha_2(x)\in (1,1.9)$.}
			\setlength{\tabcolsep}{11pt}{
				\begin{tabular}{lcccccc} % 控制表格的格式
					\toprule[1pt]
					\multirow{2}{*}{$h$} & \multicolumn{2}{l}{$\alpha_1(x) = 1-0.9\tanh(|x|)$}  & \multicolumn{2}{l}{$\alpha_2(x) = 1+0.9\tanh (|x|)$} &
                     \multicolumn{2}{l}{$\alpha_3(x) = 0.4\chi_{[x>0]} + 1.2\chi_{[x\leq0]}$}\\
					\cmidrule(r){2-3}  
					\cmidrule(r){4-5}
                    \cmidrule(r){6-7}
					& $E_\infty(h)$ & Order &  $E_\infty(h)$ & Order &$E_\infty(h)$ & Order\\
					\midrule[0.25pt]
					1/4 &1.17e-02 &*  &2.25e-02 &* &1.68e-02 &* \\
					1/8& 2.93e-03 &1.99   &5.69e-03 &1.98 & 4.23e-03 & 1.99\\
					1/16& 7.35e-04&2.00  & 1.44e-03 &2.00 & 1.06e-03 &2.00\\
					1/32 & 1.84e-04&2.00  &3.61e-04&2.00 &2.65e-04 &2.00\\
					1/64 & 4.61e-05&2.00 &9.03e-05&2.00 &6.62e-05 &2.00\\
     
					\bottomrule[1pt]
			\end{tabular}}
			\label{Approx_VO_Lap_1D}
		\end{table}
		
		\begin{table}[h]
			\centering
			\caption{The errors and convergence orders for the approximation to $(-\Delta)^{\alpha(x)/2}u(x)$ in 2D (Example \ref{exm-approximation-function}). Here $D = \{x\in \Omega~|~x_1,x_2>0\}$,     $\alpha_1(x)\in (0.1,1)$ and $\alpha_2(x)\in (1,1.9)$.  }
			\setlength{\tabcolsep}{11pt}{
				\begin{tabular}{lcccccc} % 控制表格的格式
					\toprule[1pt]
					\multirow{2}{*}{$h$} & \multicolumn{2}{l}{$\alpha_1(x) = 1-0.9\tanh (|x|)$}  & \multicolumn{2}{l}{$\alpha_2(x) = 1+0.9\tanh (|x|)$} & \multicolumn{2}{l}{$\alpha_3(x) = 0.4\chi_{[x\in D]}+1.2\chi_{[x\in D^c]}$}\\
					\cmidrule(r){2-3}  
					\cmidrule(r){4-5}  \cmidrule(r){6-7}
					& $E_\infty(h)$ & Order &  $E_\infty(h)$ & Order  &$E_\infty(h)$ & Order\\
					\midrule[0.25pt]
					1/4 &2.06e-02& * &2.68e-02 &  * &3.05e-02 &*\\
					1/8& 1.33e-02& 1.99  &5.19e-03 & 1.99 &7.69e-03&     1.99\\
					1/16& 3.32e-03  & 1.99 &1.31e-03 & 1.99& 1.93e-03     &     1.99\\
					1/32 & 8.28e-04 &2.00  &3.37e-04&1.97 &4.90e-04     &     1.98\\
					\bottomrule[1pt]
			\end{tabular}}
			\label{Approx_VO_Lap_2D}
		\end{table}
		
		\begin{table}[h]
			\centering
			\caption{The errors and convergence orders for the approximation to $(-\Delta)^{\alpha(x)/2}u(x)$ in 3D (Example \ref{exm-approximation-function}). Here $D = \{x\in \Omega ~|~x_1,x_2,x_3>0\}$,    $\alpha_1(x)\in (0.1,1)$ and $\alpha_2(x)\in (1,1.9)$.}
			\setlength{\tabcolsep}{11pt}{
				\begin{tabular}{lcccccc} % 控制表格的格式
					\toprule[1pt]
					\multirow{2}{*}{$h$} & \multicolumn{2}{l}{$\alpha_1(x) = 1-0.9\tanh{|x|}$}  & \multicolumn{2}{l}{$\alpha_2(x) = 1+0.9\tanh (|x|)$} &\multicolumn{2}{l}{$\alpha_3(x) = 0.4\chi_{[x\in D]}+1.2\chi_{[x\in D^c]}$}  \\
					\cmidrule(r){2-3}  
					\cmidrule(r){4-5}
					\cmidrule(r){6-7}
					& $E_\infty(h)$ & Order &  $E_\infty(h)$ & Order &  $E_\infty(h)$ & Order\\
					\midrule[0.25pt]
					$1$ & 3.98e-01     &*  &3.98e-01  & * & 5.83e-01     &* \\
					$1/2$& 1.10e-01     &1.86     & 1.43e-01    &  1.48 & 1.64e-01     &1.83\\
					$1/4$& 2.81e-02     &     1.96   & 3.97e-02     &     1.85     & 4.23e-02     &     1.96\\
					\bottomrule[1pt]
			\end{tabular}}
			\label{Approx_VO_Lap_3D}
		\end{table}

\vskip 10cm

\newpage

	%%%%%%%%%%%%%%%%%%%%%%%%%%%%%%%%%%%%%%%%%%%%%%%%%%%%%%%%%%%%%%%%%%%%%%%%%%%%%%%%%%%%%%%%%%%%%%%%%%%%%%%%%%%%%%%%%%%%%%%%%%%%%%%%%%%%%%%%%
	\section{Application on  fractional PDEs} \label{sec-app-frac-ellip-eqn}

			\subsection{Finite difference scheme for the elliptic equations}
		
		In this section, we will apply the proposed discrete operator into solving the  elliptic equations and parabolic equations   with the variable-order fractional Laplacian. 	For the parabolic equations, after the semi-discretization in temporal direction, they are reduced to the elliptic one at the each time step. Thus we only discuss  the elliptic case as below:
		\begin{eqnarray}
			&&	(-\Delta)^{\alpha(x)/2}u+ b(x) u(x)= f(x), \quad x\in \Omega\subset \mathbb{R}^d, \qquad \alpha(x) \in (0,2), \label{eq:2d-diffu-reac} \\
			&& u(x)=0, \quad x\in \Omega^{c}, \label{eq:2d-diffu-reac-bc}	\end{eqnarray}
		where the reaction coefficient {$b(x)$ is non-negative}, 
		\iffalse
		\begin{eqnarray}
			&&	u_t+(-\Delta)_d^{\alpha/2}u+\mathcal{N} (u)= f(x,t), \quad  (x,t)\in \Omega\times (0,T],  \\
			&& u=0 ,\quad  (x,t)\in   \Omega^c \times (0,T] , \label{eq:2d-model-bc} \\
			&& u(x,0)=u_0(x),\quad x\in \Omega , 	\end{eqnarray}
		where  $\mathcal{N} $ is nonlinear function of $u$ satisfying the standard assumption so that there exists a unique solution, 
		\fi
		the right-hand-side function $f(x)$ is given,    $\Omega$ is the standard bounded domain with the sufficiently smooth boundary  and $\Omega^{c}$ is the complement of  $\Omega$ in $\mathbb{R}^d$.	 

\begin{rem}
  Regarding the mathematical theory related to the variable-order fractional Laplacian, the authors in \cite{Fukushima-Uemura-2012} showed that the kernel admits the lower bounded Dirichlet form.  Later the authors in \cite{Felsinger-KV-2015} set up the elliptic and the parabolic Dirichlet problem for the linear nonlocal operators which cover the special case as in this work. The paper formulated the problem in the classical framework of Hilbert spaces and proved unique solvability using  standard techniques like the Fredholm alternative.   In \cite{Silvestre-2005}, the author investigated the H\"older regularity of the elliptic equations with a variable-order fractional Laplace operator.  %The well-posedness of problem (6) for nonsymmetric kernels was analyzed in Felsinger et al. [29] in a very general setting and in Schilling and Wang [30] for kernels of the form (7).     
\end{rem}

		Consider  Equation (Eq.) \eqref{eq:2d-diffu-reac} at the grid points $x_j=jh$ and we obtain
		\begin{eqnarray}
			&& 	(-\Delta)^{\alpha(x_j)/2} u(x_j)+ b(x_j) u(x_j) =f(x_j).\label{Discrete-eq-1}
		\end{eqnarray}
			Replacing the fractional Laplacian  by  the difference operator defined in Eq. \eqref{def-discrete-frac-lap}, Eq. \eqref{Discrete-eq-1} becomes
		\begin{eqnarray}
			&& 	(-\Delta_h)^{\alpha(x_j)/2} u(x_j)+b(x_j) u(x_j)  =f(x_j)+T_{u}(x_j),\quad   x_j \in \Omega_h,
			\label{Discrete-eq-2}
		\end{eqnarray}
		with the truncation error $T_u$ depending on the solution $u$. 
		By Theorem \ref{thm-approximation-property},  we assume 	$u \in  \mathcal{B}^{s+\alpha_{\max}} (\mathbb{R}^d)$ with a positive constant $s\leq 2$, such that  
		there exists a constant $c_u$   satisfying
		\begin{eqnarray}\label{trunction-err}
			|T_{u}|\leq  c_u{h^{s}}.
		\end{eqnarray}

		Omitting $T_u$ in Eq. \eqref{Discrete-eq-2} and
		denoting by $u_{j}$ the numerical approximation of $u(x_j)$,  and $\alpha(x_j)= \alpha_j$ (the short notation $\alpha_j$ will be frequently  used  throughout the following without the confusion), $b_j=b(x_j)$ and   $f_{j}=f(x_j)$, we get the finite    difference  scheme 
		\begin{eqnarray}
			&& 	(-\Delta_h)^{\alpha_j/2} u_{j}+ b_j u_{j} =f_{j}, \quad x_j \in  \Omega_h,\label{v-order-scheme}\\
			&& u_{j}=0, \quad  x_j \in  \Omega^c_h.\label{v-order-scheme-bc}
		\end{eqnarray}

		\begin{thm}[Stability and Convergence]\label{thm-stability-convergence}  
			For any $h>0$, the finite difference scheme   \eqref{v-order-scheme}-\eqref{v-order-scheme-bc} is uniquely solvable and  stable with respect to the right hand side $f$ in the following  sense 
			\begin{eqnarray}
				&&
				\|u\|_{L_h^\infty} \leq c_3 \|f\|_{L_h^\infty}. 
			\end{eqnarray}
			Moreover, let $U_{j}=u(x_j)$ be the  solution of  Eqs. \eqref{eq:2d-diffu-reac}-\eqref{eq:2d-diffu-reac-bc}  and $u_{j} $ be the numerical solution of the difference scheme \eqref{v-order-scheme}-\eqref{v-order-scheme-bc}.   Assuming  that the solution $u \in  \mathcal{B}^{s+\alpha_{\max}} (\mathbb{R}^d)$ with $s \leq 2$,
			it  holds that
			\begin{eqnarray}
				\quad  \|U-u\|_{L_h^\infty}\leq c {h^{s}}.
			\end{eqnarray}
		\end{thm}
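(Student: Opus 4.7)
The plan is to establish unique solvability, stability, and the error estimate by casting the scheme as a linear system with M-matrix structure and then combining the associated discrete maximum principle with a barrier-function argument. First, I would rewrite \eqref{v-order-scheme}--\eqref{v-order-scheme-bc} as a finite linear system $A u = f$ on $\Omega_h$, with entries $A_{jj}=h^{-\alpha_j}a_0^{(\alpha_j)}+b_j$ and $A_{jk}=h^{-\alpha_j}a_{k-j}^{(\alpha_j)}$ for $k\neq j$, the values of $u$ outside $\Omega_h$ being set to zero. From the Fourier representation \eqref{def-discrete-frac-lap} one reads off $\sum_n a_n^{(\alpha_j)}=M_h(0)^{\alpha_j/2}=0$; the explicit 1D formula derived just before \eqref{def-a_k_alpha} gives $a_0^{(\alpha_j)}>0$ and $a_n^{(\alpha_j)}\le 0$ for $n\neq 0$, and the same sign pattern in higher dimensions follows from the fact that $\xi\mapsto M_h(\xi)^{\alpha/2}$ is (discretely) conditionally negative definite on the torus $[-\pi/h,\pi/h]^d$, which identifies its off-zero Fourier coefficients as non-positive. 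Together with $b_j\ge 0$ and the strict positivity of the row sums (the infinite-stencil identity $\sum_{k\in\mathbb{Z}^d}a_{k-j}^{(\alpha_j)}=0$ leaves $\sum_{k\in\Omega_h}A_{jk}=h^{-\alpha_j}\sum_{k\notin\Omega_h}|a_{k-j}^{(\alpha_j)}|+b_j>0$), this identifies $A$ as an M-matrix. Hence $A$ is invertible, $A^{-1}\ge 0$ entry-wise, and the discrete maximum principle holds.

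For the stability estimate with a constant independent of $h$, I would construct a barrier $\phi\in C_b^2(\mathbb{R}^d)$, supported on a ball $B_R\supset\Omega$, satisfying $L_h\phi_j\ge 1$ for all $x_j\in\Omega_h$ uniformly in $h$. A natural candidate is a smoothed version of $\phi(x)=C_1(R^2-|x|^2)_+^{\alpha_{\max}/2}$ (or a positive bump), for which the continuous variable-order fractional Laplacian admits an explicit lower bound on $\Omega$ uniformly over $\alpha(x)\in[\alpha_{\min},\alpha_{\max}]$; one then transfers this lower bound to $L_h\phi_j$ via the consistency estimate of Theorem \ref{thm-approximation-property}, refining $h$ if needed. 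Applying the discrete maximum principle to $\pm u_j-\|f\|_{L_h^\infty}\phi_j$ yields $\|u\|_{L_h^\infty}\le (\max_j\phi_j)\,\|f\|_{L_h^\infty}$, which is the desired estimate with $c_3=\max_j\phi_j$. The convergence then follows immediately: the error $e_j=U_j-u_j$ satisfies $L_h e_j=T_u(x_j)$ on $\Omega_h$ with homogeneous boundary values, so combining the stability estimate just proved with the truncation bound \eqref{trunction-err} gives $\|U-u\|_{L_h^\infty}\le c_3\,\|T_u\|_{L_h^\infty}\le c\,h^s$.

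The main obstacle is the barrier step: one must exhibit a single function $\phi$ for which $L_h\phi_j\ge 1$ holds uniformly in $h$ and uniformly as $\alpha_j$ ranges over $[\alpha_{\min},\alpha_{\max}]$. The variable exponent prevents the scaling and pure Fourier-symbol arguments that work cleanly in the constant-order setting, so one likely has to combine pointwise lower bounds for the continuous variable-order Laplacian of $\phi$ on $\Omega$ with the discrete-consistency bound from Theorem \ref{thm-approximation-property}, handled carefully near the boundary where $\phi$ and its fractional powers degenerate. The multi-dimensional verification of the sign pattern $a_n^{(\alpha_j)}\le 0$ through conditional negative definiteness of $M_h(\xi)^{\alpha/2}$ is the other nontrivial ingredient on which the entire M-matrix argument rests.
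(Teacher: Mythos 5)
Your overall skeleton (M-matrix structure, discrete maximum principle, stability constant via a barrier, then convergence from the truncation error) matches the paper's, and your observation that the finite row sums equal $h^{-\alpha_j}\sum_{k\notin\Omega_h}|a_{k-j}^{(\alpha_j)}|+b_j>0$ is exactly the right starting point. But your proof has a genuine gap at the step you yourself flag as the main obstacle: the construction of a barrier $\phi$ with $L_h\phi_j\ge 1$ uniformly in $h$. The candidate $(R^2-|x|^2)_+^{\alpha_{\max}/2}$ is only H\"older continuous at $\partial B_R$, so it does not lie in $\mathcal{B}^{s+\alpha_{\max}}$ with $s=2$ and Theorem \ref{thm-approximation-property} cannot be used to transfer the continuous lower bound to $L_h\phi$; smoothing it destroys the torsion-function identity that made its fractional Laplacian positive, and for a generic smooth positive bump $(-\Delta)^{\alpha(x)/2}\phi$ changes sign, so no uniform positive lower bound on all of $\Omega$ is available. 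As written, the stability estimate is therefore not established. (A secondary, lesser issue: the multi-dimensional sign pattern $a_n^{(\alpha_j)}\le 0$ for $n\neq 0$ is asserted via conditional negative definiteness of $M_h(\xi)^{\alpha/2}$ but not proved; this is fixable by a Bernstein-function argument, and in any case the paper restricts its proof to 1D on $\Omega=(0,1)$.)

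The paper closes this gap without any smooth barrier: it takes the constant discrete vector $v=\mathbf{1}$ on $\Omega_h$ (extended by zero outside) and bounds $(S^{(\alpha)}v)_j$ from below \emph{directly at the discrete level}, using the quantitative tail estimate from Lemma \ref{lem-properties-coefficients}, namely $\sum_{|n|\ge N}|a_n^{(\alpha_j)}|\ge c_3 N^{-\alpha_j}$. Since the full row sum over $\mathbb{Z}$ vanishes and the off-diagonal weights are negative, the finite row sum equals the absolute tail sum, so $(S^{(\alpha)}v)_j\ge c_3\,h^{-\alpha_j}N^{-\alpha_j}=c_3(hN)^{-\alpha_j}$, which is bounded below by a constant independent of $h$ because $hN=|\Omega|$ is fixed. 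Combined with the monotonicity of $S^{(\alpha)}$ and the elementary Lemma \ref{lem:barrier}, this gives $\|(S^{(\alpha)})^{-1}\|_{l^\infty}\le c$ immediately. In effect the paper's barrier is the indicator of the domain rather than a $C_b^2$ function, which sidesteps entirely the need to invoke the consistency theorem for the barrier. You already had the zero-sum identity and the sign pattern in hand; the missing ingredient is the coefficient decay/tail estimate, which is what converts ``row sums are positive'' into ``row sums are bounded below uniformly in $h$.''
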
	

  \iffalse
			\begin{thm}[Convergence]\label{thm-convergence}
			Let $U_{j}=u(jh)$ be the  solution of  equation \eqref{eq:2d-diffu-reac}-\eqref{eq:2d-diffu-reac-bc}  and $u_{j} $ be the numerical solution of the difference scheme \eqref{v-order-scheme}-\eqref{v-order-scheme-bc}.   Assume  $\alpha(x)$ is smooth enough such that $u \in  \mathcal{B}^{s+\alpha_{\max}} (\mathbb{R}^d)$ with $s \leq 2$,
			it  holds that
			\begin{eqnarray}
				\quad  \|U-u\|_{L_h^\infty}\leq c {h^{s}}.
			\end{eqnarray}
			
		\end{thm}
		\fi

	%%%%%%%%%%%%%%%%%%%%%%%%%%%%%%%%%%%%%%%%%%%%%%%%%%%%%%%%%%%%%%%%%%%%%%%%%%%%%%%%%%%%%%%%%%%%%%%%%%%%%%
		\subsection{Numerical Experiments}\label{sec-num-experm}
		
		In this subsection,   several  numerical examples  are presented to show the efficiency  and accuracy  of the schemes. 	 We first apply the proposed finite difference scheme to solve the elliptic equation in Example \ref{Ex-known-solution} and  the time-dependent problem in Example \ref{Parabolic_Equation-2D}, respectively. To show the wide application of finite difference method, the finite difference  scheme is applied to solve Allen-Cahn equation in Example \ref{AppliedtoFACequation}. In addition, we consider the coexistence of anomalous diffusion problem in Example \ref{Ex-general-domain}. In the last Example \ref{parabolic-eq-3-D}, we turn to the time-dependent problem in  3D.
		
		Throughout the examples, the low-rank approximation terms $r$ is taken as $7$ to compute the Chebyshev points and the quadrature number $M$ is taken as $2^{14}$ to compute the coefficients $a^{(\alpha_j)}_{n}$ in Step 2  so that the accuracy of numerical solution will not be polluted. The tolerance of the BiCGSTAB (see \cite{VdV92}) method is set as $10^{-16}$ and the initial guess is fixed as zero in our simulations. All the simulations are performed in MATLAB 2021b on a 64-bit Ubuntu with Intel Xeon(R) Gold 6240 CPU processor at 2.60 GHz and 126 GB of RAM.

		\begin{exm}[Finite difference scheme for variable-order fractional elliptic equations]\label{Ex-known-solution}
			Consider the elliptical equations with variable-order fractional Laplacian $(-\Delta)^{\alpha(x)/2}  u+\mu u=f$ on $\Omega = [-1,1]^2$. Case 1: $\mu=1$ and the exact solution is set as $u(x)= (1-x_1^2)^\beta (1-x_2^2)^\beta$; Case 2:  $\mu=0$, the exact solution is unknown and the right-hand data is $f(x)=1$.
		\end{exm}	 
		
		In Case 1, we take $\beta = 4$. Since the right hand side $f(x)$ is unknown, we need to compute it with very fine step-size which is set as $f^{ref}\approx f_h=(-\Delta_h)^{\alpha(\cdot)/2}  u+u $ with small $h=2^{-9}$. In Table \ref{VOFDE_Lap_solve_case1}, the $\operatorname{\mathit{L_h^\infty}-\mathit{norm}}$ errors and convergence orders are listed with different $\alpha(x)$. Here,  the $\operatorname{\mathit{L_h^\infty}-\mathit{norm}}$ error is measured as $E_\infty(h) = ||u_h-u||_{L_h^\infty}$. Furthermore, from Table \ref{VOFDE_Lap_solve_case1}, we can observe that the convergence order of our scheme is of second order.
		
		Next we consider Case 2, where $f(x)=1$ and the exact solution $u(x)$ is unknown. Table \ref{VOFDE_Lap_solve_case2} shows the errors and convergence orders with different $\alpha(x)$. Here, the $\operatorname{\mathit{L_h^\infty}-\mathit{norm}}$ error is measured as $E_\infty(h) = ||u_h-u_{h/2}||_{L_h^\infty}$. From the table,   we observe that the convergence order is less than second order and is around $\tilde{\alpha}$, where $ \min\limits_{x\in\Omega} \alpha(x)/2 \leq \tilde{\alpha} \leq \max\limits_{x\in\Omega} \alpha(x)/2$. 

        In particular, if $\alpha(x)$ is a non-smooth piece-wise function, Table \ref{VOFDE_Lap_solve_case1}  shows that our scheme \eqref{v-order-scheme}-\eqref{v-order-scheme-bc}  still maintains second-order convergence when the exact solution $u(x)$ is smooth enough.  But  the convergence order reduces to $\min\limits_{x\in\Omega} \alpha(x)/2$ when the right-side data $f(x)=1$ and the exact solution $u(x)$ is unknown; see Table \ref{VOFDE_Lap_solve_case2}. The reason for the reduced convergence is that the solution may have the interior singularity inherited from the variable-order function.   
        
       To recover the second-order convergence rate, we take $\alpha(x)\in C(\mathbb{R}^2)$ satisfying $\alpha(x) = 2$ on $\partial \Omega$ and $\alpha(x)<2$ in $\Omega$.  Figure \ref{VO_Ex3_alpha} shows the profile of the function $\alpha(x)=0.8+1.2\max{(\abs{x_1},\abs{x_2})}$. From  Table \ref{VOFDE_Lap_solve_case2_special_order}, we observe that the appropriately selected variable-order function $\alpha(x)$  enables us to recover the second-order convergence. We remark that for the case 1 corresponding to the first column results  in Table \ref{VOFDE_Lap_solve_case2_special_order}, we further test the smaller case $h=1/128$ and $1/256$ and the convergence orders are 1.69 and 3.02, respectively. Thus the average order is around 2.  For the case 2 in the second column, we also take smaller stepsize $h=1/128$ and the convergence order is 2.1.      This implies that setting  the variable-order $\alpha(x)$ can change the regularity of the solution and avoid the non-physical boundary singularity which reduces the convergence order as in Table \ref{VOFDE_Lap_solve_case2}.
		%Specifically, when $\alpha(x)\in C(\Omega)$ satisfies $\alpha(x) = 2$ on $\partial \Omega$ and $\alpha(x)<2$ in $\Omega$, the convergence rate  approximates second order. %and the velocity of tendency seems to be depending on $\alpha_{min} = \min\limits_{x\in\Omega}\alpha(x)$.  That is, the smaller the $\alpha_{min}$ becomes, the slower the velocity of tendency gets. 

		\begin{table}[h]
			\centering
			\caption{The errors and convergence orders of the scheme \eqref{v-order-scheme}-\eqref{v-order-scheme-bc} for $(-\Delta)^{\alpha(x)/2}u(x) + u(x)=f(x)$ by the BiCGSTAB method. (Case 1 of Example \ref{Ex-known-solution}).}
			\setlength{\tabcolsep}{13pt}{
				\begin{tabular}{lcccccc} % 控制表格的格式
					\toprule[1pt]
					\multirow{2}{*}{$h$} & \multicolumn{2}{l}{$\alpha(x) = 1+|x|/4$}  & \multicolumn{2}{l}{$\alpha(x) = 1-0.5\tanh (|x|)$} & \multicolumn{2}{l}{$\alpha(x) = 0.4\chi_{[x_1\leq0]}+1.2\chi_{[x_1>0]}$}\\
					\cmidrule(r){2-3}  
					\cmidrule(r){4-5}
                    \cmidrule(r){6-7} 
					& $E_\infty(h)$ & Order &  $E_\infty(h)$ & Order&  $E_\infty(h)$ & Order\\
					\midrule[0.25pt]
					1/4 &2.26e-02&*&1.86e-02&*&1.15e-02 &* \\
					1/8&5.61e-03&2.01  & 4.46e-03&2.06&4.01e-03&1.52   \\
					1/16&1.40e-03&2.00  &1.11e-03&2.01&1.04e-03&1.94  \\
					1/32 &3.51e-04&2.00 &2.78e-04&1.99&2.63e-04&1.99\\
					\bottomrule[1pt]
			\end{tabular}}
			\label{VOFDE_Lap_solve_case1}
		\end{table}
		
		%\begin{table}[h]
		%	\centering
		%	\caption{The convergence orders and errors of the finite difference scheme for$(-\Delta)^{\alpha(x)/2}u=f$ by CG method. (Case 2 of Example 5.2)}
		%	\setlength{\tabcolsep}{30pt}{
			%		\begin{tabular}{lllll} % 控制表格的格式
				%			\toprule[1pt]
				%			\multirow{2}{*}{Nx} & \multicolumn{2}{l}{$\alpha(x) = 1+||\boldsymbol x||/10$}  & \multicolumn{2}{l}{$\alpha(\boldsymbol x) = 1-0.5\tanh (||\boldsymbol x||)$} \\
				%			\cmidrule(r){2-3}  
				%			\cmidrule(r){4-5}
				%			& $E_\infty(h)$ & Order &  $E_\infty(h)$ & Order\\
				%			\midrule[0.25pt]
				%			$16^2$ &3.24e-02&  &3.12e-02&   \\
				%			$32^2$&7.87e-03 & 2.04  & 7.28e-03&2.10   \\
				%			$64^2$&1.96e-03& 2.01 &1.79e-03&2.03  \\
				%			$128^2$ &4.88e-04& 2.00&4.50e-04&1.99 \\
				%			\bottomrule[1pt]
				%	\end{tabular}}
		%	\label{table4}
		%\end{table}
		
		\begin{table}[h]
			\centering
			\caption{The errors and convergence orders of the finite difference scheme for $(-\Delta)^{\alpha(x)/2}u(x)=1$ by the BiCGSTAB method. Here $D = \{x\in \Omega ~|~-0.8\leq x_1\leq 0.8,\,-0.8\leq x_2 \leq 0.8\}$(Case 2 of Example \ref{Ex-known-solution}).}
			\setlength{\tabcolsep}{13pt}{
				\begin{tabular}{lcccccc} % 控制表格的格式
					\toprule[1pt]
					\multirow{2}{*}{$h$} & \multicolumn{2}{l}{$\alpha(x) = 1+|x|/2$}  & \multicolumn{2}{l}{$\alpha(x) = 1-0.5\tanh (|x|)$} & \multicolumn{2}{l}{$\alpha(x) = 1.6\chi_{x\in D}+2\chi_{x\in D^c}$}\\
					\cmidrule(r){2-3}  
					\cmidrule(r){4-5}
					\cmidrule(r){6-7}
					& $E_\infty(h)$ & Order &  $E_\infty(h)$ & Order & $E_\infty(h)$ & Order\\
					\midrule[0.25pt]
					1/8 &6.88e-03&*  &3.40e-02&*  &1.28e-02 &*  \\
					1/16& 4.33e-03&0.67   &2.53e-02&0.43  &6.20e-03     &     1.05  \\
					1/32&2.68e-03&0.69 &1.95e-02&0.38 &3.68e-03     &     0.75 \\
					1/64 &1.63e-03&0.71 &1.56e-02&0.32 &1.96e-03     &     0.91  \\
					\bottomrule[1pt]
			\end{tabular}}
			\label{VOFDE_Lap_solve_case2}
		\end{table}

        \begin{table}[h]
			\centering
			\caption{The errors and convergence orders of the finite difference scheme for $(-\Delta)^{\alpha(x)/2}u(x)=1$ by the BiCGSTAB method. Here $g(x) = \max\{|x_1|,|x_2|\}$. (Case 2 of Example \ref{Ex-known-solution}). }
			\setlength{\tabcolsep}{8pt}{
				\begin{tabular}{lcccccccc} % 控制表格的格式
					\toprule[1pt]
					\multirow{2}{*}{$h$} & \multicolumn{2}{l}{$\alpha(x) = 0.8 + 1.2g(x)$}  & \multicolumn{2}{l}{$\alpha(x) = 1.2 + 0.8g(x)$} & \multicolumn{2}{l}{$\alpha(x) = 1.6 + 0.4g(x)$}&
                    \multicolumn{2}{l}{$\alpha(x) = 2$}\\
					\cmidrule(r){2-3}  
					\cmidrule(r){4-5}
					\cmidrule(r){6-7}
                    \cmidrule(r){8-9}
					& $E_\infty(h)$ & Order &  $E_\infty(h)$ & Order & $E_\infty(h)$ & Order & $E_\infty(h)$ & Order\\
					\midrule[0.25pt]
					1/8 &7.38e-03&*  &2.25e-02&*  &1.19e-03&* &2.65e-03&* \\
					1/16& 2.74e-03&1.43  &7.48e-03&1.59  &2.90e-04& 2.03 &6.76e-04 &1.97 \\
					1/32&9.36e-04&1.55 &2.26e-03&1.73 &7.14e-05& 2.02 &1.70e-04 &1.99 \\
					1/64 &2.99e-04&1.65 &6.46e-04&1.81 &1.76e-05&  2.02 &4.25e-05 &2.00\\
					\bottomrule[1pt]
			\end{tabular}}
			\label{VOFDE_Lap_solve_case2_special_order}
		\end{table}

        \begin{figure}[h]
			\centering\includegraphics[width=0.5\linewidth]{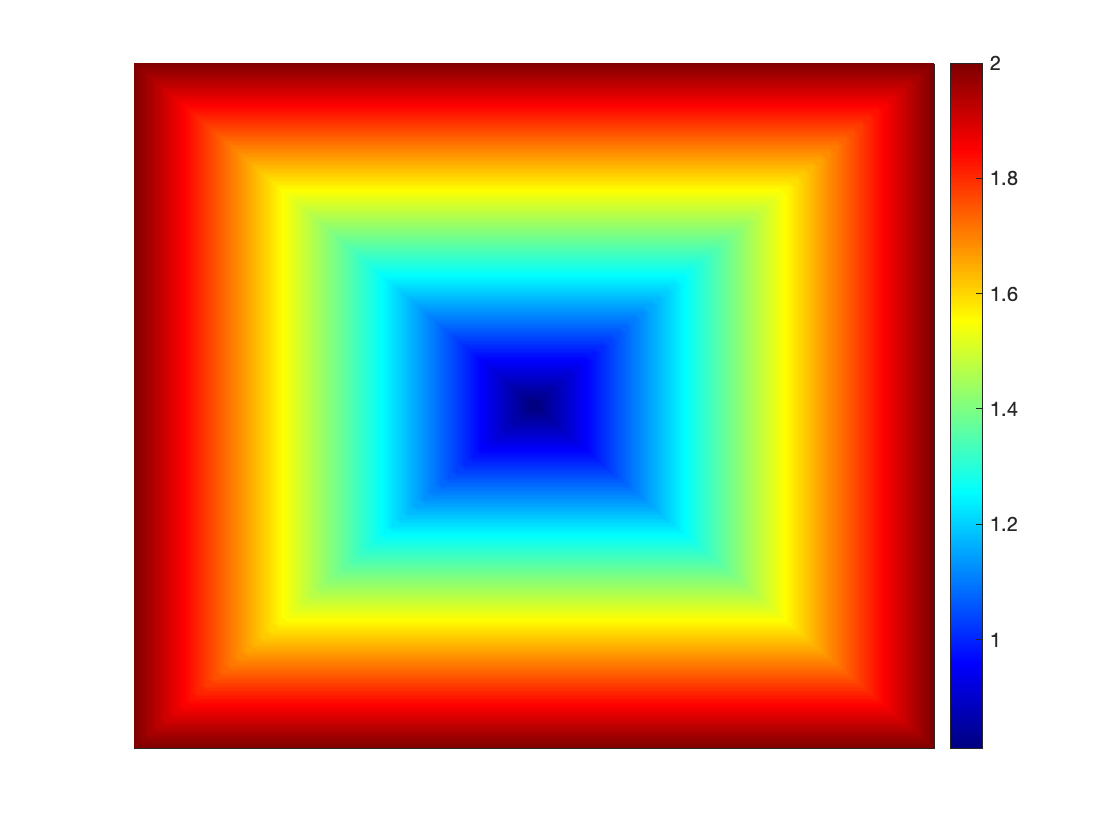}
				\caption[Figure3]{
					The profile of  the function $\alpha(x)=0.8+1.2g(x)$ with $g(x) = \max\{|x_1|,|x_2|\}$. (Example \ref{Ex-known-solution}).}
				\label{VO_Ex3_alpha}%文中引用该图片代号
				%		\end{minipage}
		\end{figure}
  
		\begin{exm}[Time-dependent problem]\label{Parabolic_Equation-2D}
			Consider the initial-boundary value problem with variable-order fractional Laplacian
			\begin{eqnarray}
				&&	u_t+(-\Delta)^{\alpha(x)/2}{u}= 0, \quad  (x,t)\in \mathbb{R}^2\times (0,T], \label{eq-2d-IBVP} \\
				&& \lim_{x\rightarrow \infty} u(x,t) = 0 ,\quad   t \in (0,T) \label{eq-2d-IBVP-ic}, \end{eqnarray}
			with the initial condition $ {u}(x,0) = e^{-|x|^2},\quad x\in \mathbb{R}^2$.
		\end{exm}
		In this example, we test the convergence order of spatial direction at the final time $T=N \Delta t =0.5$, where $\Delta t$ is time step-size.
		Since the solution  decays to zero, we truncate the finite domain  in space,    $\Omega=[-4,4]^2$   as our computational domain. For time discretization we use Crank-Nicolson scheme. Since the exact solution is unknown, the error is measured as $ \|u^N(\Delta t, h)-u^N({\Delta t/2,h/2}) \|_{L_h^\infty}$.
		
		Table \ref{VO_Lap_Time_dependent_2D} shows the errors and convergence orders in $\operatorname{\mathit{L_h^\infty}-\mathit{norm}}$ for different $\alpha(x)$,  from which the second-order convergence can be observed. This is in agreement with the second-order approximation of our theoretic prediction.  
		
		\begin{table}[h]
			\centering
			\caption{The spatial errors and convergence orders for initial-boundary value problem \eqref{eq-2d-IBVP}-\eqref{eq-2d-IBVP-ic} at final time T = 0.5. (Example \ref{Parabolic_Equation-2D}). }
			\setlength{\tabcolsep}{25pt}{
				\begin{tabular}{llllll} % 控制表格的格式
					\toprule[1pt]
					\multirow{2}{*}{$h$} &\multirow{2}{*}{$\Delta t$}  & \multicolumn{2}{l}{$\alpha(x) = 1+|x|/10$}  & \multicolumn{2}{l}{$\alpha(x) = 1-0.5\tanh (|x|)$} \\
					\cmidrule(r){3-4}  
					\cmidrule(r){5-6}
					& &error & Order & error & Order\\
					\midrule[0.25pt]
					1/2 & 1/2 &1.34e-02&*  &2.36e-02&*   \\
					1/4 & 1/4&3.07e-03&2.12   &4.54e-03&2.38   \\
					1/8& 1/8&7.85e-04&1.97 &1.12e-03&2.02  \\
					1/16 & 1/16&1.99e-04&1.98  &2.82e-04&1.99 \\
					\bottomrule[1pt]
			\end{tabular}}
			\label{VO_Lap_Time_dependent_2D}
		\end{table}
		
		\begin{exm}[Allen-Cahn equation with the
  variable-order fractional  Laplacian]\label{AppliedtoFACequation}
			%	The Allen-Cahn equation has been widely used in modeling phase field problems arising in materials science and fluid dynamics. Here, we apply our method to study the benchmark problem coalescence of two "kissing" bubbles - in the phase field models. 
			Consider the following nonlocal Allen-Cahn equation which is used in modeling phase field problems arising in materials science and fluid dynamics
			\begin{eqnarray}
				&&u_t +(-\Delta)^{\alpha(x)/2}u =- \frac{1}{\kappa^2} (u^3 - u), \quad  (x,t)\in \Omega\times (0,T], \label{AC1}\\
				&& u=-1, \quad  (x,t)\in   \Omega^c \times (0,T]. \label{AC2}
			\end{eqnarray}
		\end{exm}
		The computational domain is  $\Omega=[0,1]^2$, and $u$ is the phase field function. The initial condition is chosen as
		\begin{equation}
			u(x,0) = 1- \mathrm{tanh}( {d_1(x)}/{2 \kappa} ) - \mathrm{ tanh} ( {d_2(x)}/{2 \kappa} ),
			\nonumber
		\end{equation}
		where the constant $\kappa$ describes the diffuse interface width, the function $d_i(x) = \sqrt{(x_1- a_{i})^2 +(x_2-b_{i})^2 }$ and $a_{1}=b_{1}=0.42, a_{2}=b_{2}=0.58 $. Here, we apply our method to study the benchmark problem coalescence of two "kissing" bubbles in the phase field models. Letting $\overline{u} = u + 1$, we can reformulate the problem (\ref{AC1}) as 
		an equation of $\overline{u}$ with the extended homogeneous boundary conditions. Three-level linearized finite-difference scheme (see \cite{Wang-Hao-Du-2022})  is used  in the simulation.
		
		In the simulations, we take the mesh size $h=2^{-9}$ and the time step $\Delta t = 10^{-4}$. Figure \ref{VO_AC_Lap} shows the dynamics of the two bubbles described by the variable-order fractional Allen-Cahn equations with $\kappa = 0.01$.  Initially, the two bubbles are centered at points $(0.42,0.42)$ and $ (0.58,0.58)$, respectively. When $\alpha(x) = 1.8+|x|/8 \in [1.8,1.925]$, the two bubbles first coalesce into one bubble, and then the newly formed bubble shrinks and finally is absorbed by the fluid (see the  middle column in Figure \ref{VO_AC_Lap}). However, the smaller the value of the variable-order $\alpha(x)$ becomes, the slower coalescence and disappearing of the two bubbles get (see the right column in Figure \ref{VO_AC_Lap}). By contrast, if $\alpha(x) = 1.5-0.2\tanh(|x|) \in [1.3,1.5]$, the two bubbles do not merge, and they eventually vanish at the different time due to the inhomogeneous distribution of the variable-order function $\alpha(x)$ on $\bar{\Omega}$ (see the left column in Figure \ref{VO_AC_Lap}). From Figure \ref{VO_AC_Lap_evolution}, we can see the two single bubbles will be absorbed by the fluid. Figure \ref{VO_AC_Lap_profile_of_alpha} shows the profiles of different $\alpha(x)  $ on $ \bar{\Omega}$ tested in the simulation.
		
		\begin{figure}[h]
			\centering
			\subfigure[$t=0$]{
				\begin{minipage}[t]{0.3\linewidth}
					\centering
					\includegraphics[width=1\linewidth]{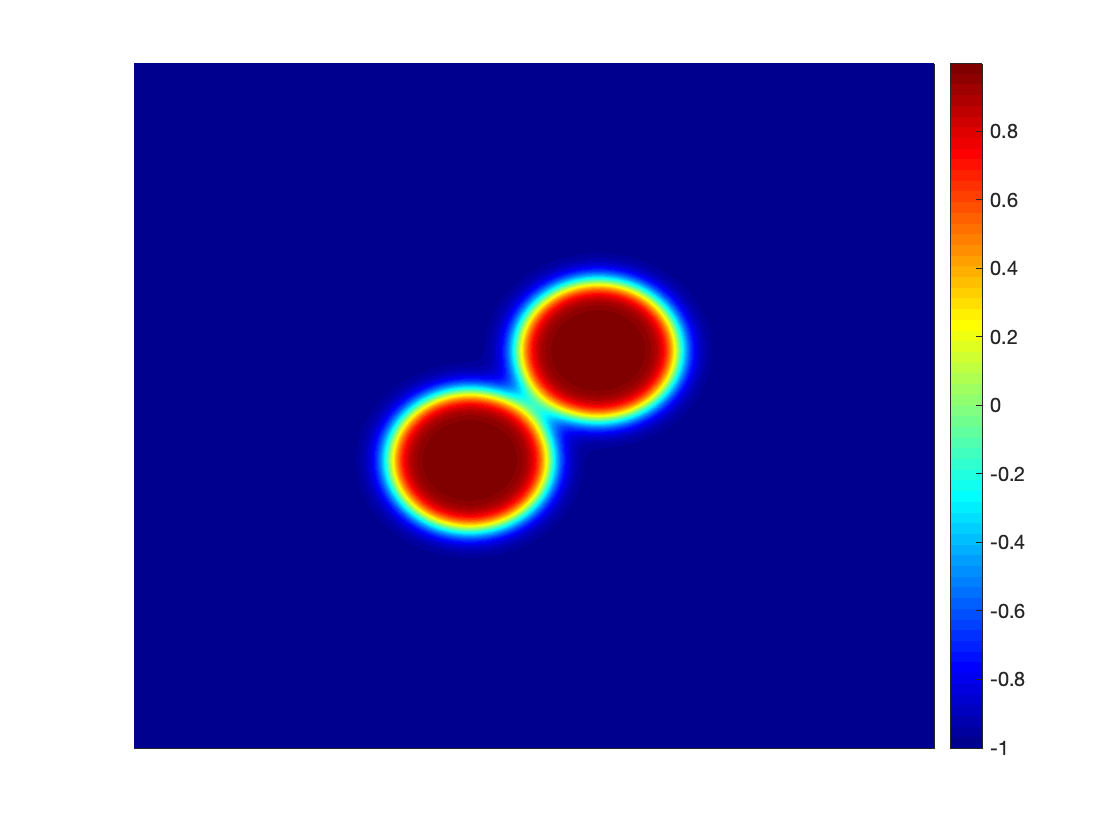}
				\end{minipage}
			}
			\subfigure[$t=0$]{
				\begin{minipage}[t]{0.3\linewidth}
					\centering
					\includegraphics[width=1\linewidth]{FDM_v_order_picture/Ex4_initial.png}
				\end{minipage}
			}
			\subfigure[$t=0$]{
				\begin{minipage}[t]{0.3\linewidth}
					\centering
					\includegraphics[width=1\linewidth]{FDM_v_order_picture/Ex4_initial.png}
				\end{minipage}
			}
			%%%%%%%%%%%%%
			\subfigure[$t=0.001$]{
				\begin{minipage}[t]{0.3\linewidth}
					\centering
					\includegraphics[width=1\linewidth]{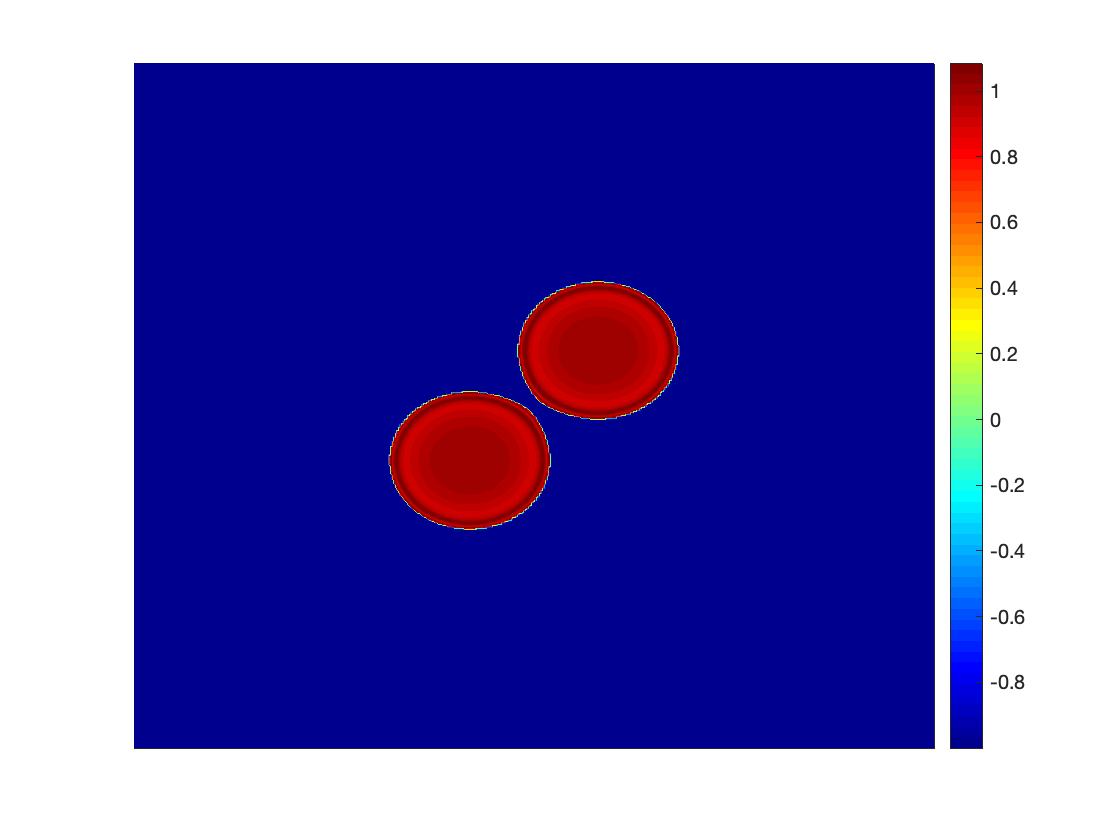}
				\end{minipage}
			}
			\subfigure[$t=0.001$]{
				\begin{minipage}[t]{0.3\linewidth}
					\centering
					\includegraphics[width=1\linewidth]{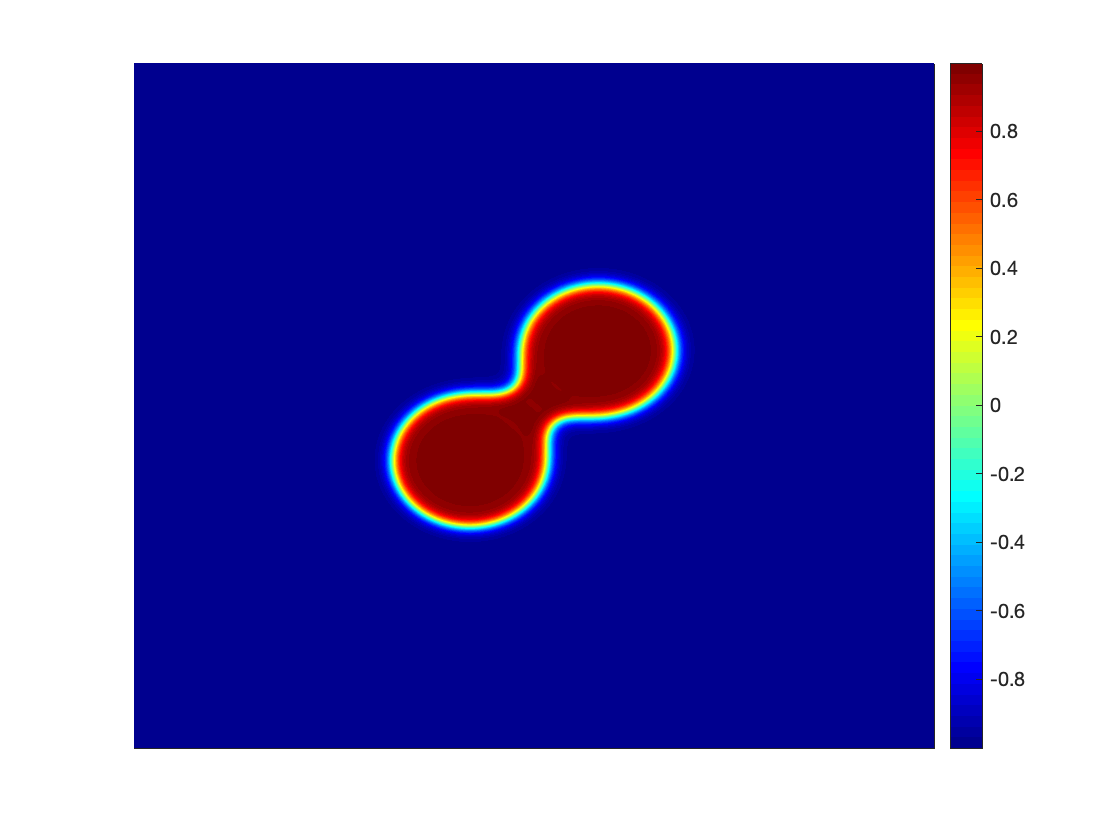}
				\end{minipage}
			}
			\subfigure[$t=0.001$]{
				\begin{minipage}[t]{0.3\linewidth}
					\centering
					\includegraphics[width=1\linewidth]{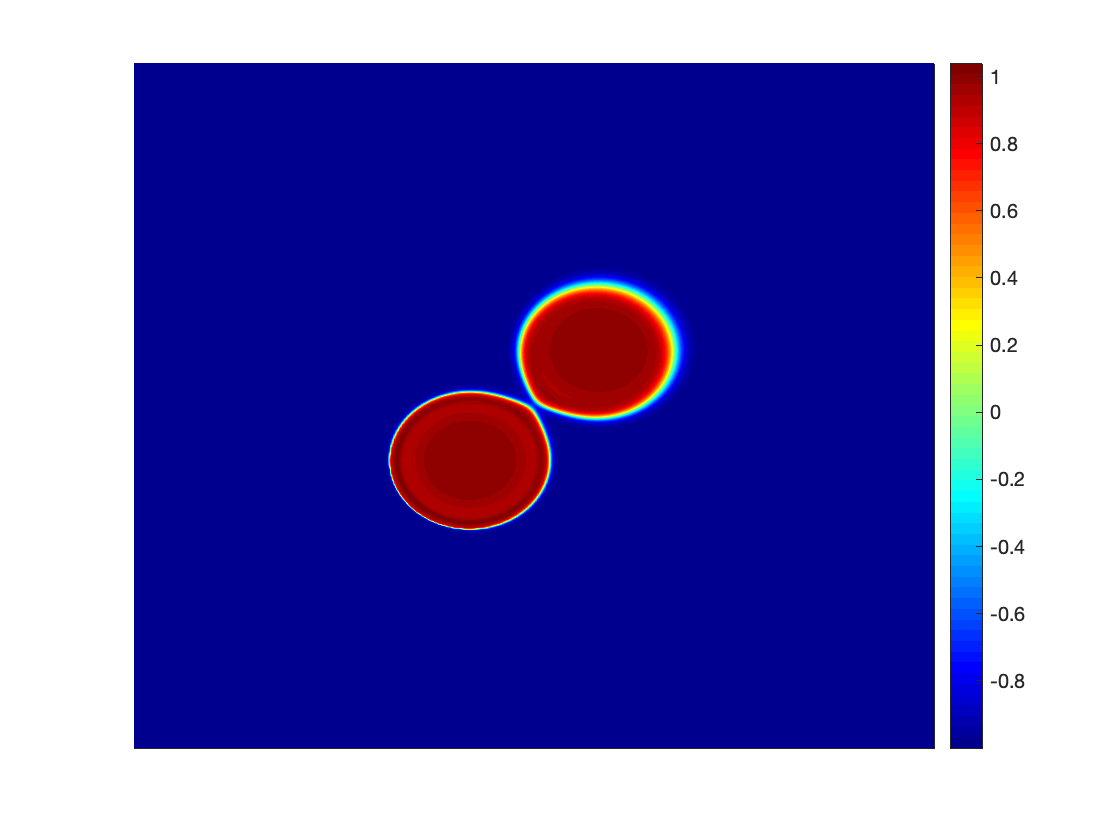}
				\end{minipage}
			}
			%%%%%%%%%%%%
			\subfigure[$t=0.004$]{
				\begin{minipage}[t]{0.3\linewidth}
					\centering
					\includegraphics[width=1\linewidth]{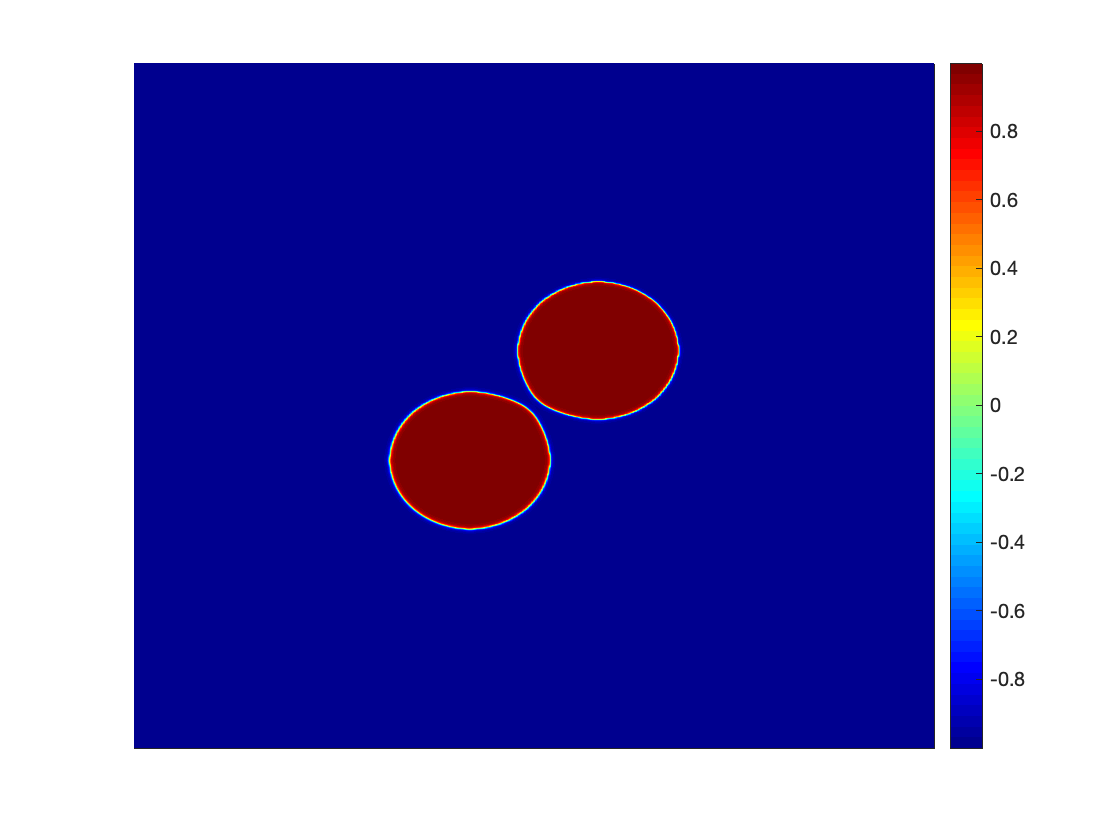}
				\end{minipage}
			}
			\subfigure[$t=0.004$]{
				\begin{minipage}[t]{0.3\linewidth}
					\centering
					\includegraphics[width=1\linewidth]{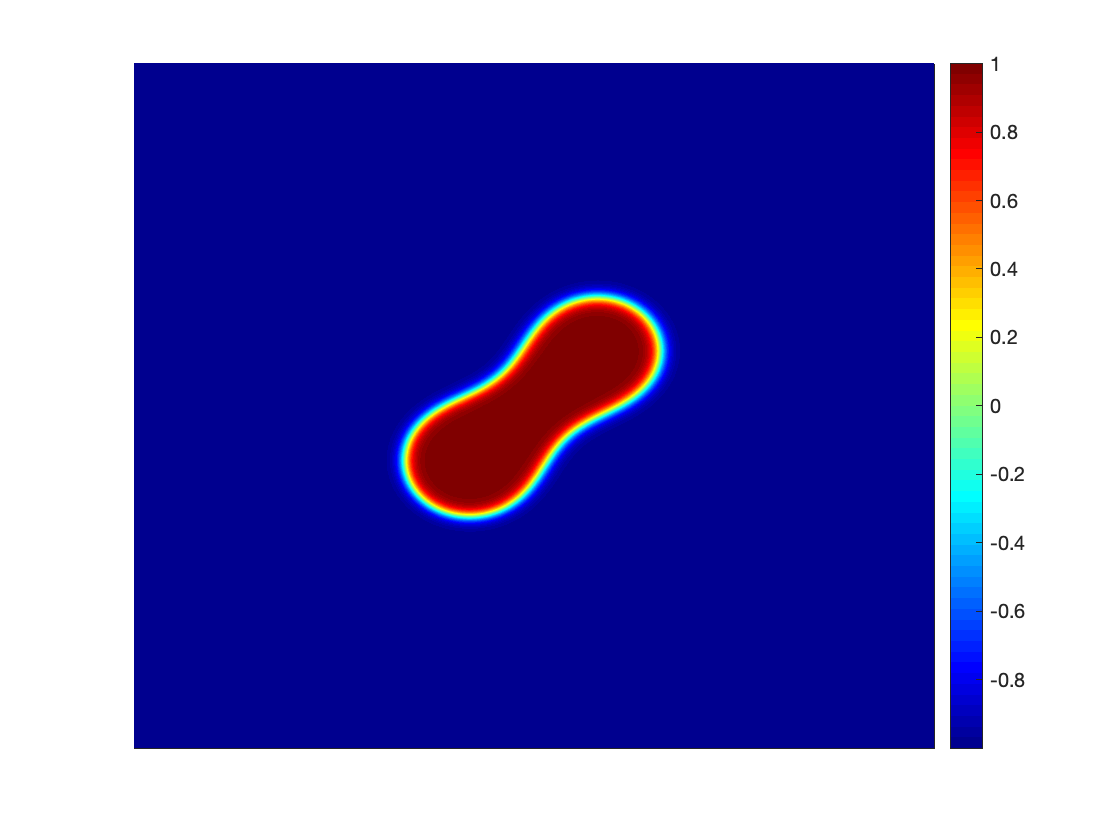}
				\end{minipage}
			}
			\subfigure[$t=0.004$]{
				\begin{minipage}[t]{0.3\linewidth}
					\centering
					\includegraphics[width=1\linewidth]{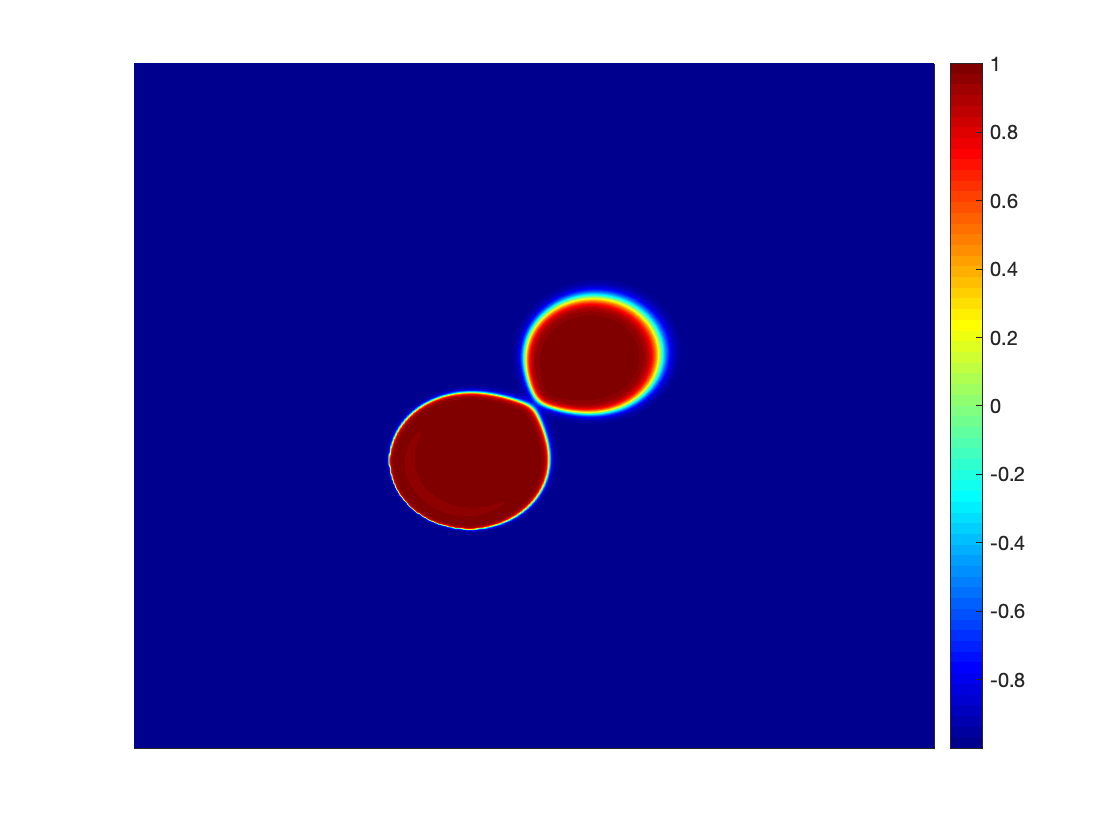}
				\end{minipage}
			}
			%%%%%%%%%%%%%%
			\subfigure[$t=0.01$]{
				\begin{minipage}[t]{0.3\linewidth}
					\centering
					\includegraphics[width=1\linewidth]{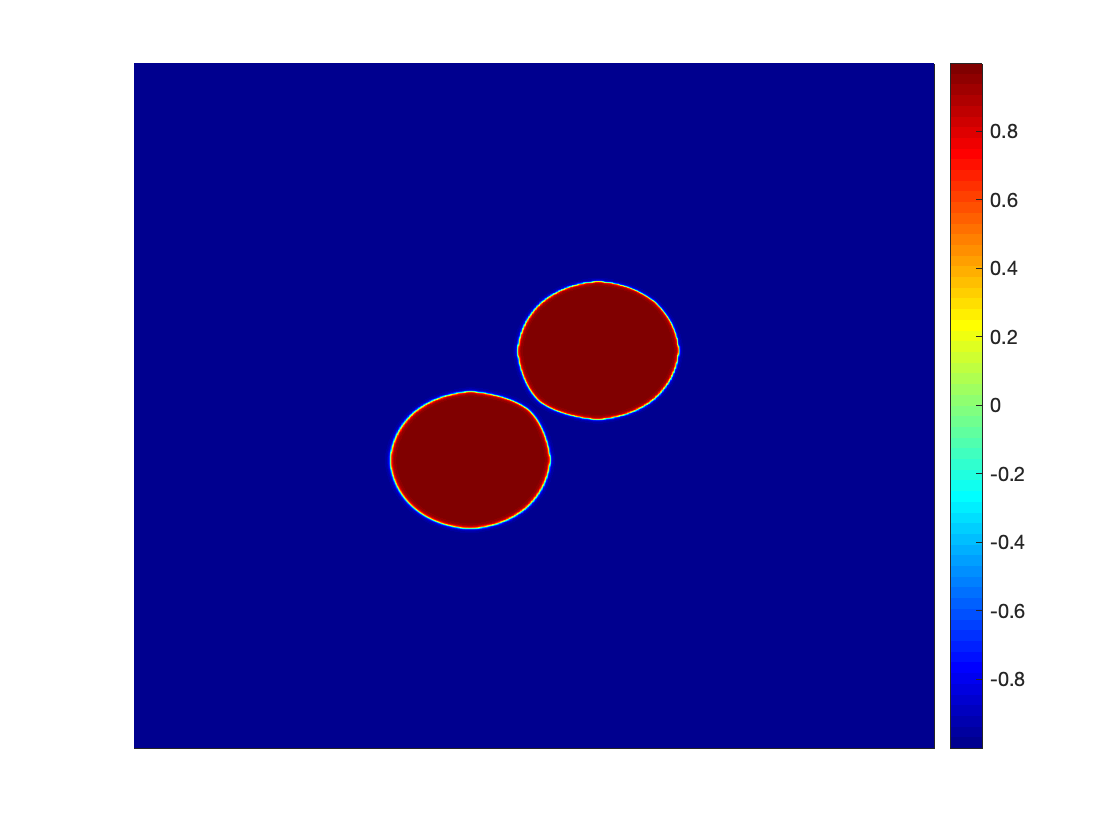}
				\end{minipage}
			}
			\subfigure[$t=0.01$]{
				\begin{minipage}[t]{0.3\linewidth}
					\centering
					\includegraphics[width=1\linewidth]{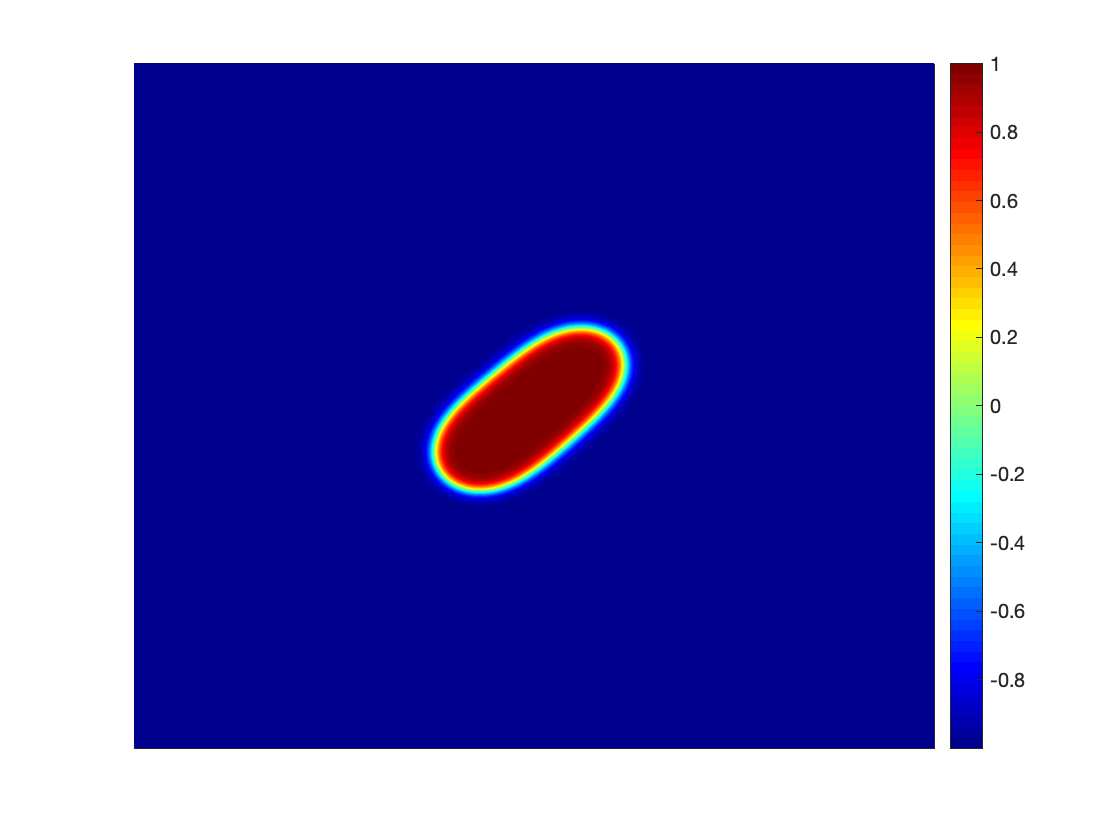}
				\end{minipage}
			}
			\subfigure[$t=0.01$]{
				\begin{minipage}[t]{0.3\linewidth}
					\centering
					\includegraphics[width=1\linewidth]{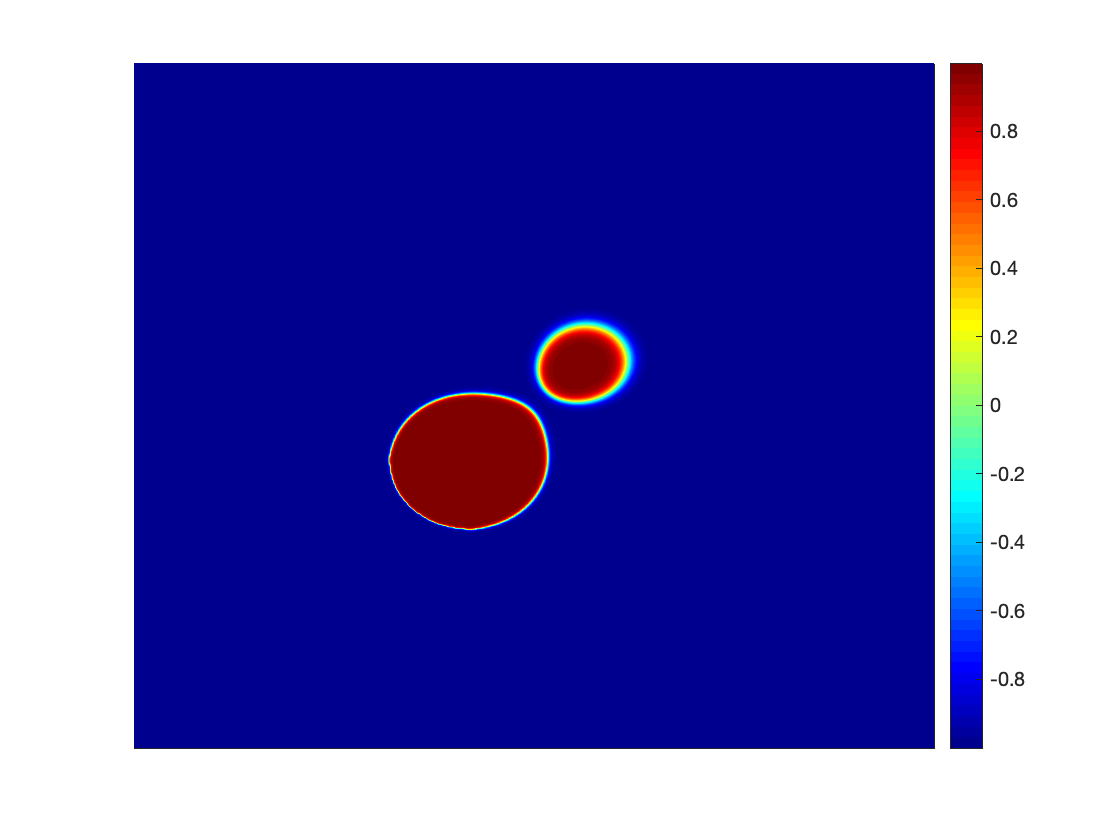}
				\end{minipage}
			}
			%%%%%%%%%%%%
			\caption{Dynamics of the two kissing bubbles for variable-order fractional Allen-Cahn equation (left: $\alpha(x)=1.5-0.2\tanh(|x|); $ middle: $\alpha(x)=1.8+|x|/8;$ right: $\alpha(x)=0.2\tanh(10(x_1-0.5)) + 0.2\tanh(10(x_2-0.5)) + 1.5$). (Example \ref{AppliedtoFACequation}).}
			\label{VO_AC_Lap}
		\end{figure}
		
		\begin{figure}[h]
			\centering
			\subfigure[$t=0.05$]{
				\begin{minipage}[t]{0.3\linewidth}
					\centering
					\includegraphics[width=1\linewidth]{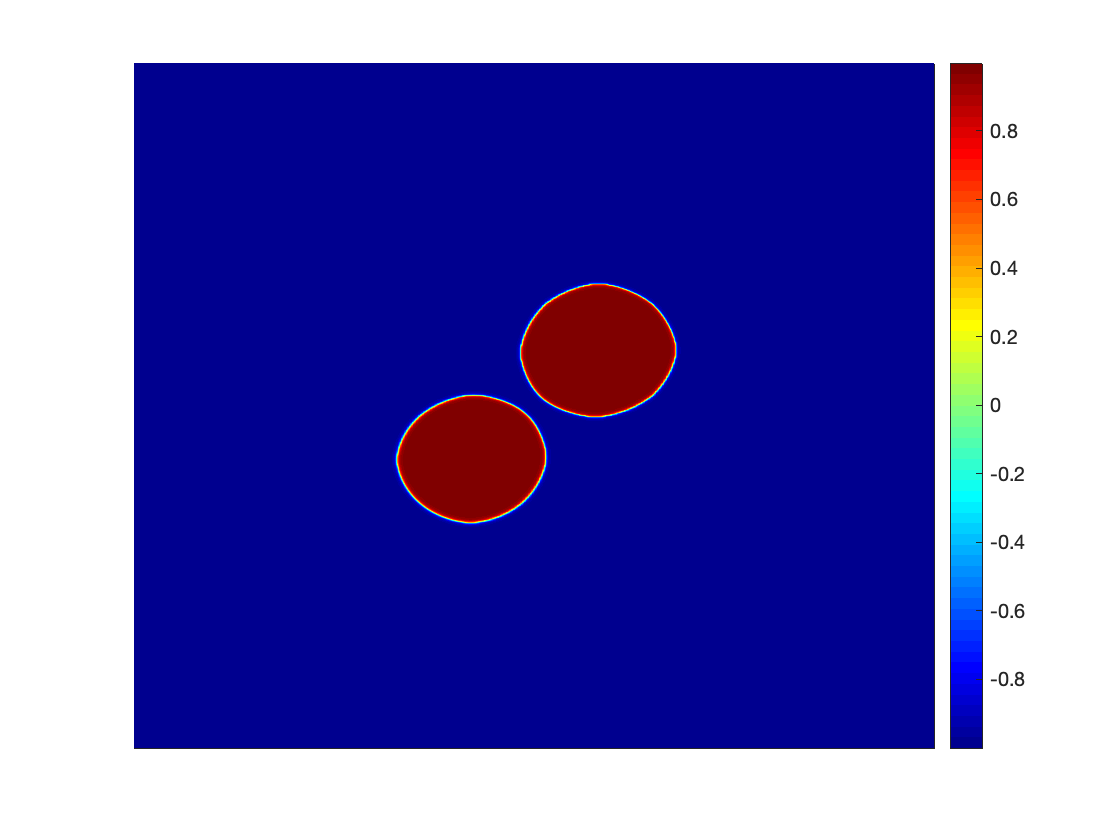}
				\end{minipage}
			}
			\subfigure[$t=0.1$]{
				\begin{minipage}[t]{0.3\linewidth}
					\centering
					\includegraphics[width=1\linewidth]{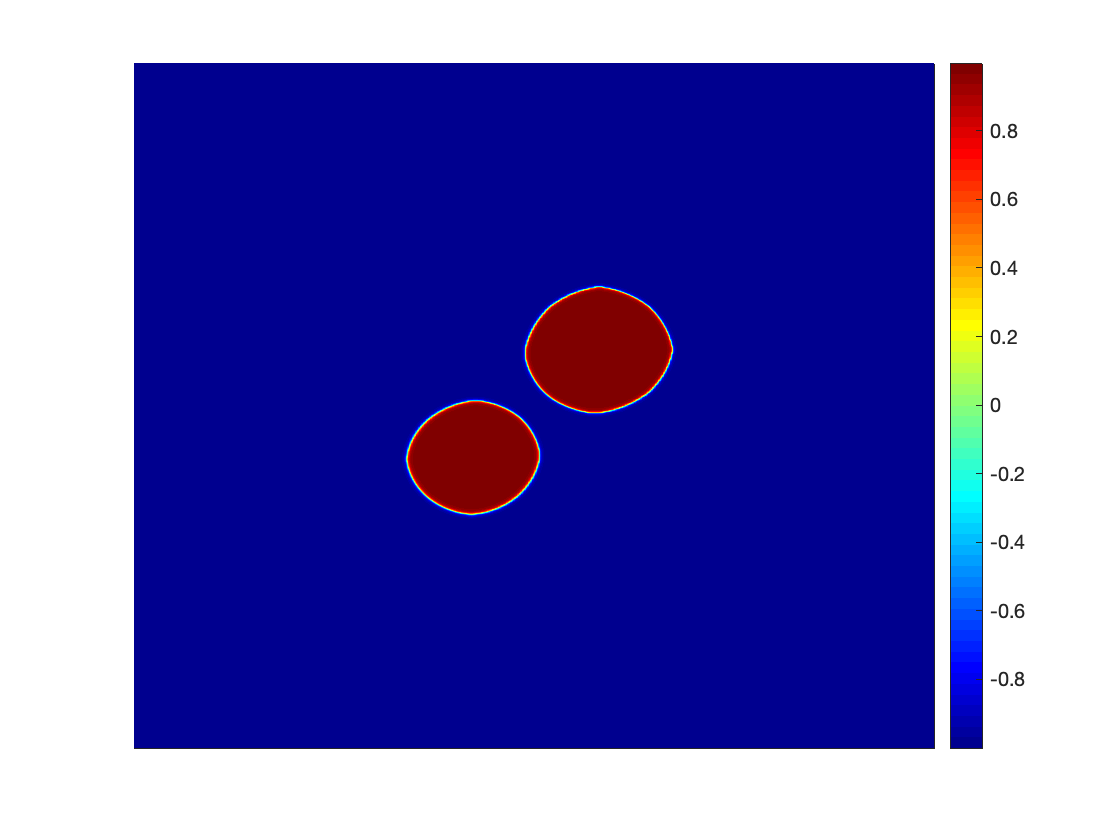}
				\end{minipage}
			}
			\subfigure[$t=0.2$]{
				\begin{minipage}[t]{0.3\linewidth}
					\centering
					\includegraphics[width=1\linewidth]{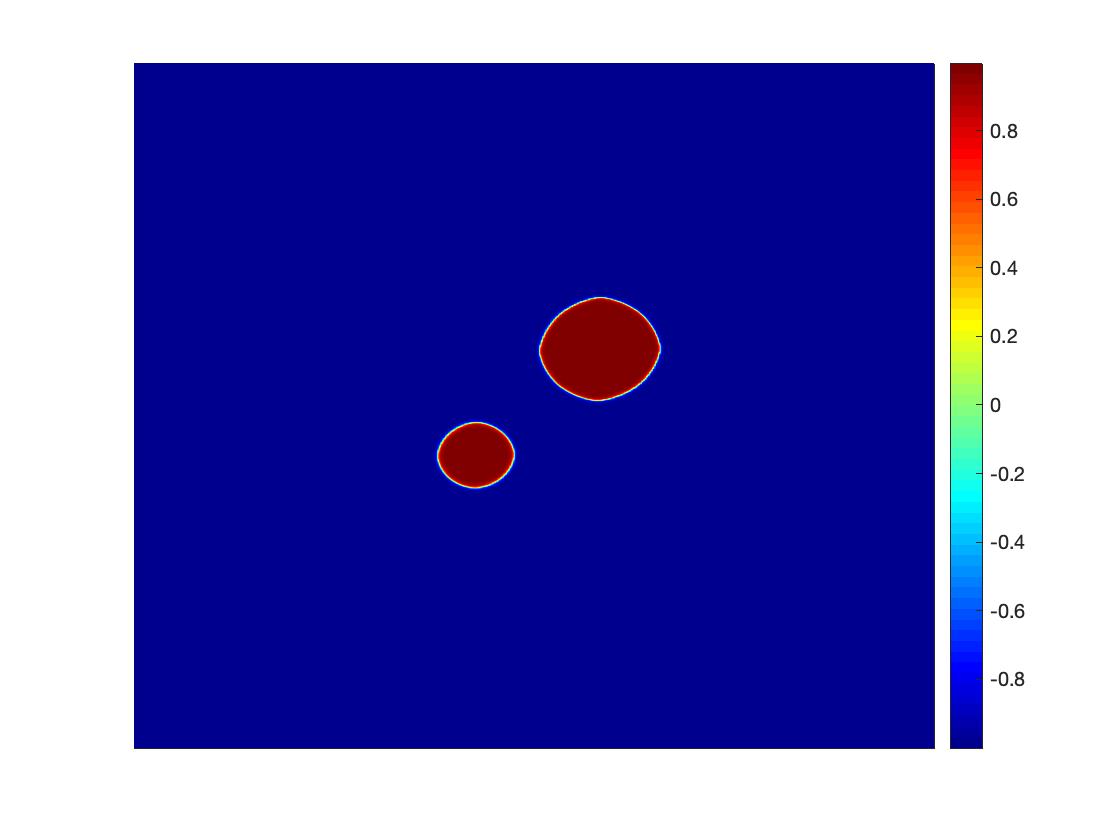}
				\end{minipage}
			}
			%%%%%%%%%%%%
			\caption{Evolution of the two “kissing” bubbles for variable fractional Allen-Cahn equation with $\alpha(x)=1.5-0.2\tanh(|x|)$. (Example \ref{AppliedtoFACequation}).}
			\label{VO_AC_Lap_evolution}
		\end{figure}
		
		\begin{figure}[h]
			\centering
			\subfigure[]{
				\begin{minipage}[t]{0.3\linewidth}
					\centering
					\includegraphics[width=1\linewidth]{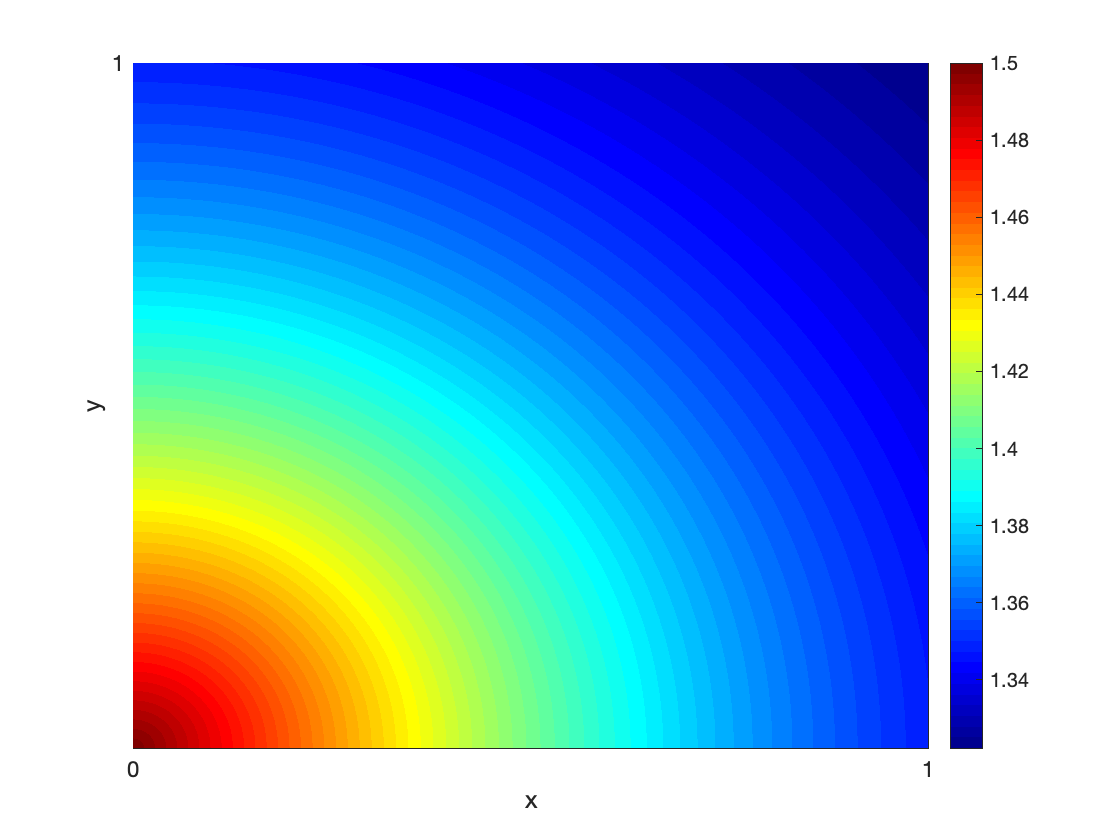}
				\end{minipage}
			}
			\subfigure[]{
				\begin{minipage}[t]{0.3\linewidth}
					\centering
					\includegraphics[width=1\linewidth]{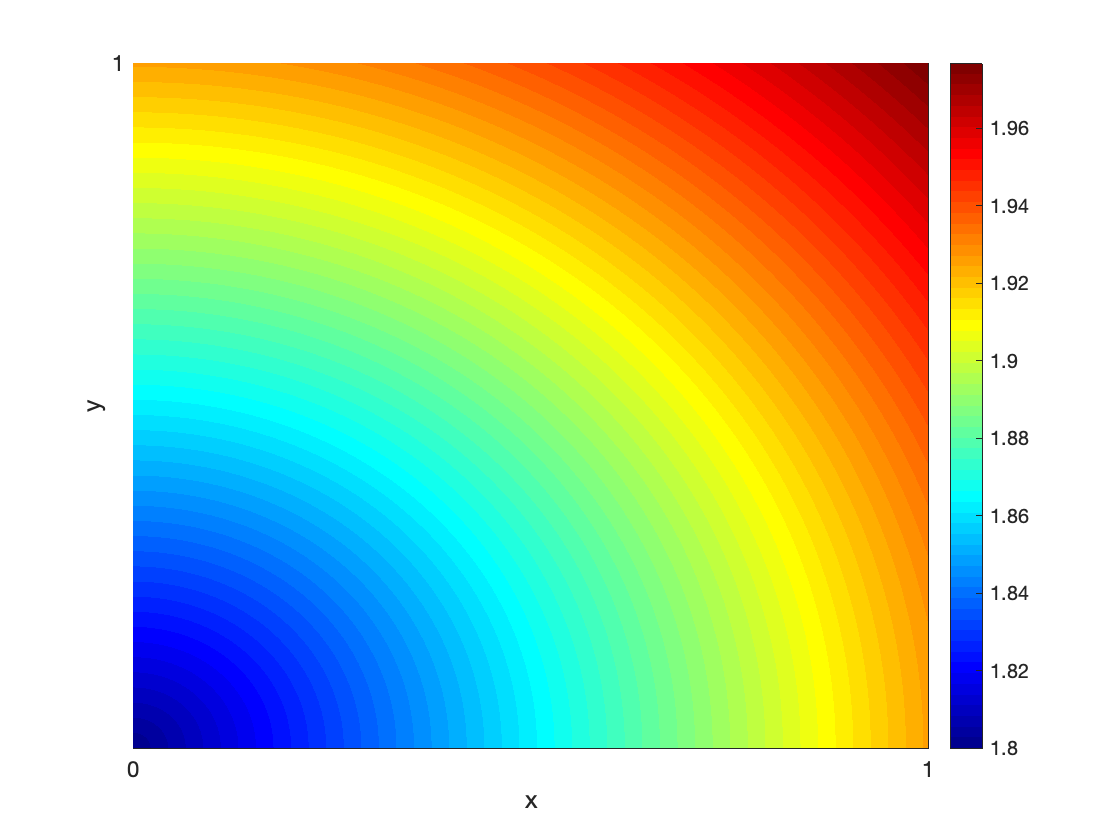}
				\end{minipage}
			}
			\subfigure[]{
				\begin{minipage}[t]{0.3\linewidth}
					\centering
					\includegraphics[width=1\linewidth]{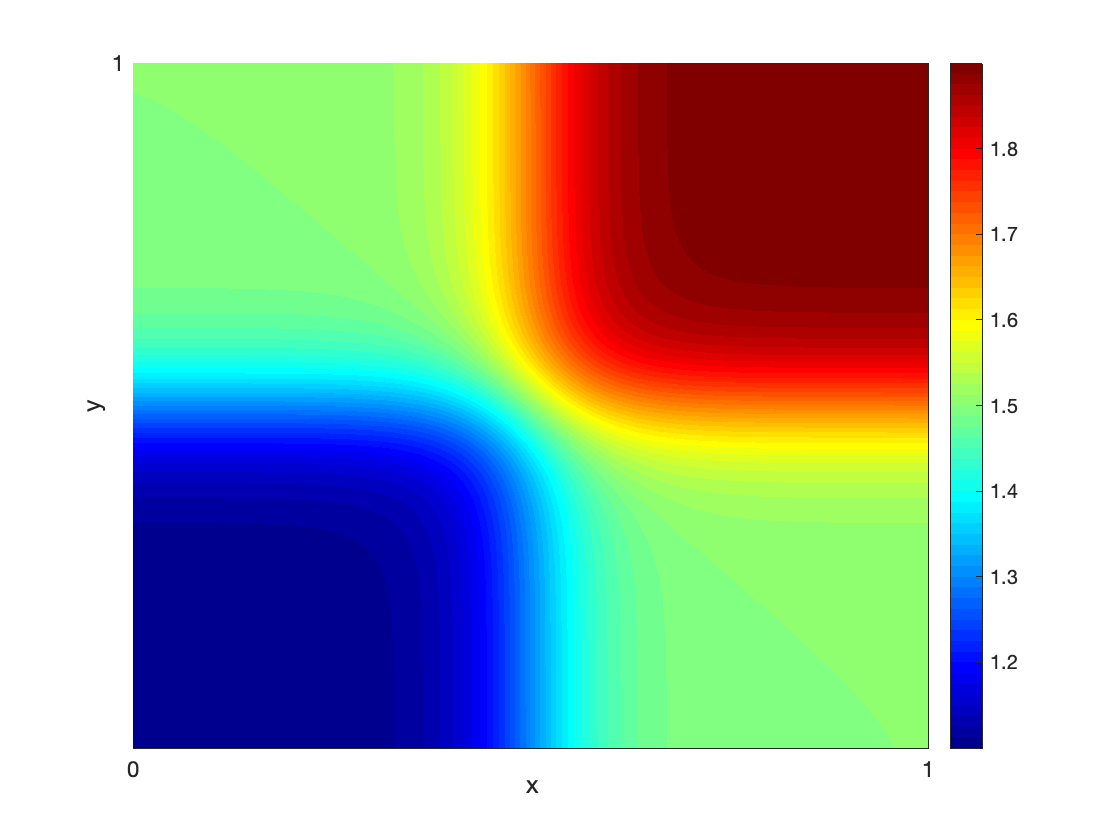}
				\end{minipage}
			}
			%%%%%%%%%%%%
			\caption{Profiles of order $\alpha(x) \, {\rm on} \, \Omega = [-1,1]^2$. (left: $\alpha(x)=1.5-0.2\tanh(|x|); $ middle: $\alpha(x)=1.8+|x|/8;$ right: $\alpha(x)=0.2\tanh(10(x_1-0.5)) + 0.2\tanh(10(x_2-0.5)) + 1.5$). (Example \ref{AppliedtoFACequation}).}
			\label{VO_AC_Lap_profile_of_alpha}
		\end{figure}
		
		\begin{exm}[Coexistence of anomalous diffusion problem \cite{lenzi2016anomalous}]\label{Ex-general-domain}
			
			Consider the coexistence of anomalous diffusion problem:
			\begin{eqnarray}
				&&\partial_t u(x, t)=-\kappa(-\Delta)^{ \alpha(x)/2 } u,  \quad \, x \in \Omega,\, t>0, \nonumber\\
				&&u(x, t)=0,  \quad \, x \in \mathbb{R}^2 \backslash \Omega,\, t \geq 0.\nonumber 
			\end{eqnarray}
		\end{exm}
		
		In the simulation,  we extend the irregular domain $\Omega$ (Figure \ref{VO_Lap_Coexistence_initial}) into a large rectangular one  such that $\Omega \in \mathbb{R}^2$. Then, we can transform the original problems into the one defined on rectangular domain \cite{Hao-Zhang-Du-2021}.
		
		Figure \ref{VO_Lap_Coexistence_dynamics} shows the dynamics of the coexistence of anomalous diffusion equation with different $\alpha(x)$. Here, we set the rectangular domain $\Omega = [-1,1]^2$, the diffusion coefficient $\kappa = 0.2$ and initial condition $u(x,0)=1$. For time discretization we use Crank-Nicolson scheme with $h=2^{-8} $ and $ \Delta t = 2\times10^{-4}$. We find that the smaller the order $\alpha(x)$, the slower the rate of diffusion (see the left and middle columns in Figure \ref{VO_Lap_Coexistence_dynamics}). However, when the order $\alpha(x)$ exhibits significantly heterogeneous at both ends of the center point, the large power side represents faster diffusion rate than the other side and even weaken it (see Figure \ref{VO_Lap_Coexistence_dynamics} for the right column).
		\begin{figure}[h]
			\centering
			%		\begin{minipage}{0.49\linewidth}
				\centering
				\includegraphics[width=0.5\linewidth]{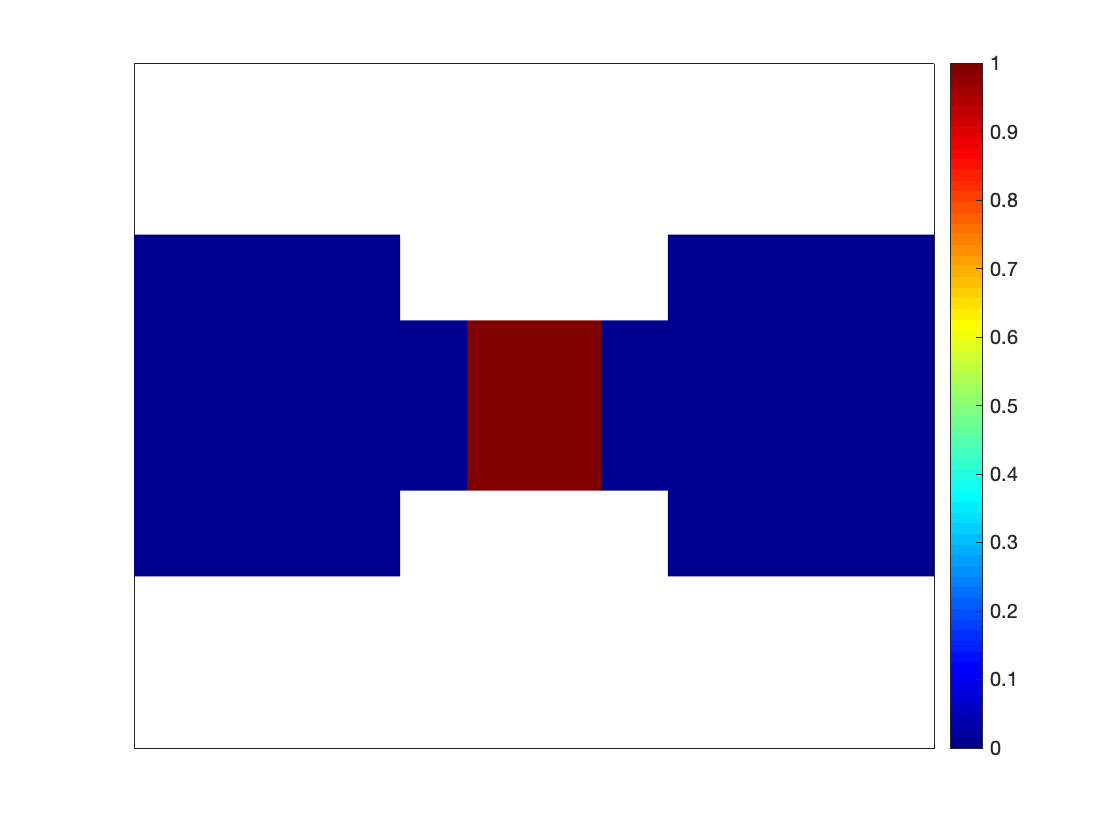}
				\caption[Figure3]{
					The profile of  solution at the initial time (Example \ref{Ex-general-domain}).}
				\label{VO_Lap_Coexistence_initial}%文中引用该图片代号
				%		\end{minipage}
		\end{figure}
		
		\begin{figure}[h]
			\centering
			\subfigure[$t=0.2$]{ 
				\begin{minipage}[t]{0.3\linewidth}
					\centering
					\includegraphics[width=1\linewidth]{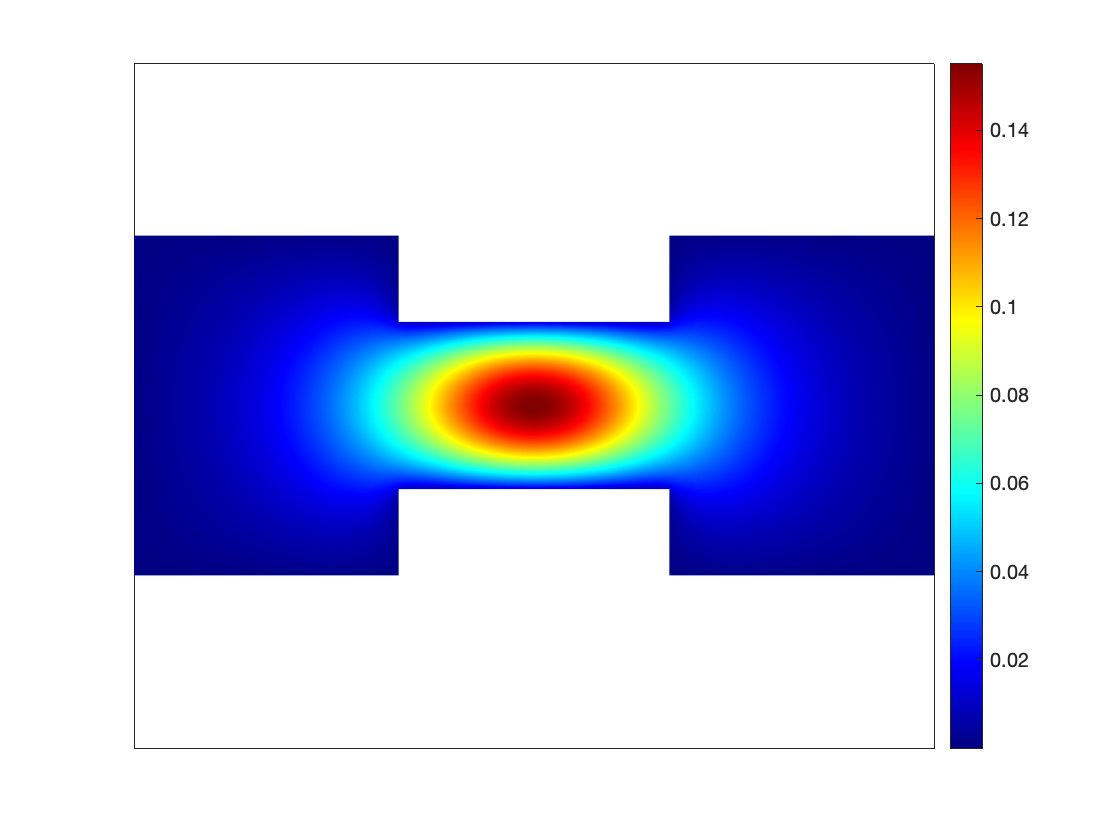}
				\end{minipage}
			}
			\subfigure[$t=0.2$]{
				\begin{minipage}[t]{0.3\linewidth}
					\centering
					\includegraphics[width=1\linewidth]{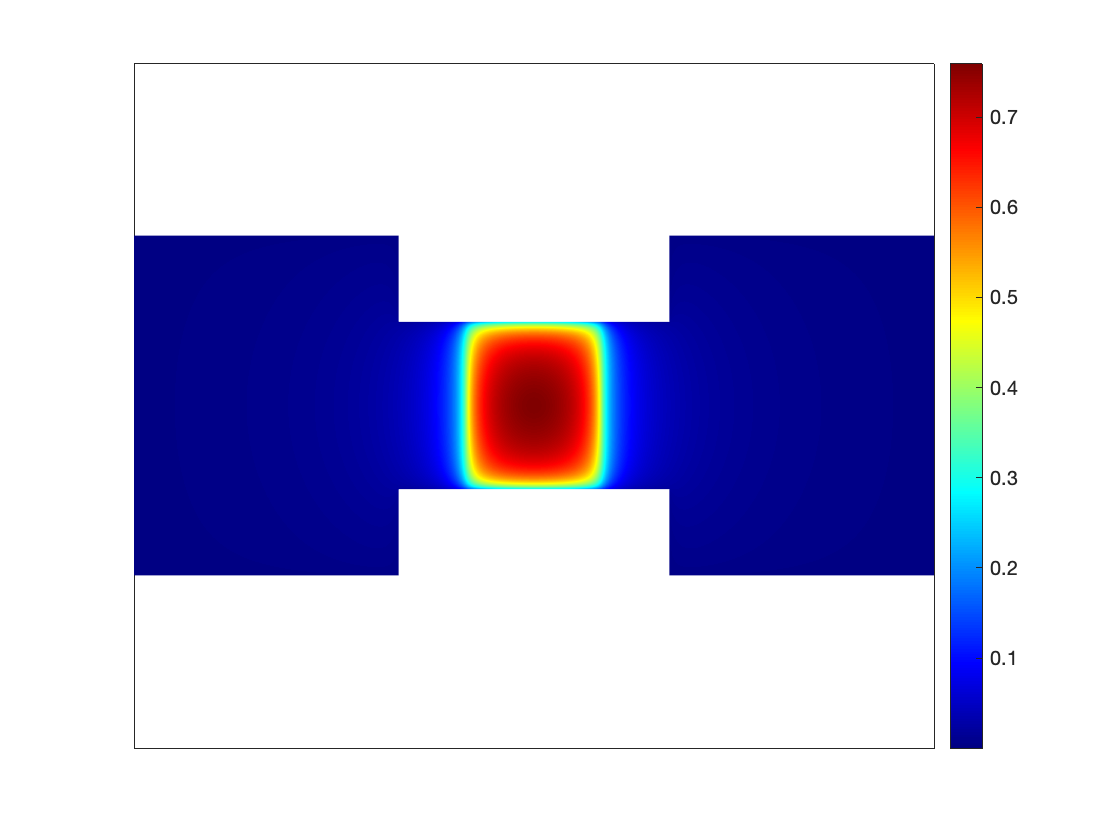}
				\end{minipage}
			}
			\subfigure[$t=0.2$]{
				\begin{minipage}[t]{0.3\linewidth}
					\centering
					\includegraphics[width=1\linewidth]{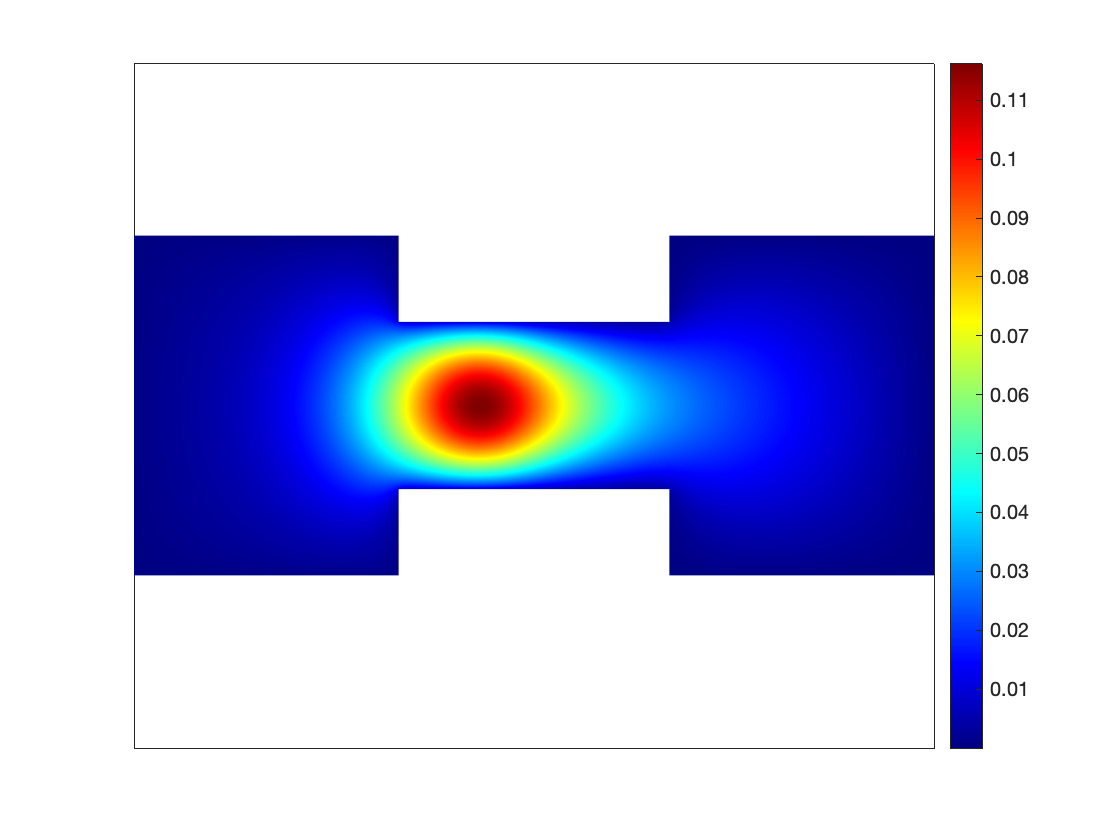}
				\end{minipage}
			}
			%%%%%%%%%%%%%
			\subfigure[$t=0.5$]{
				\begin{minipage}[t]{0.3\linewidth}
					\centering
					\includegraphics[width=1\linewidth]{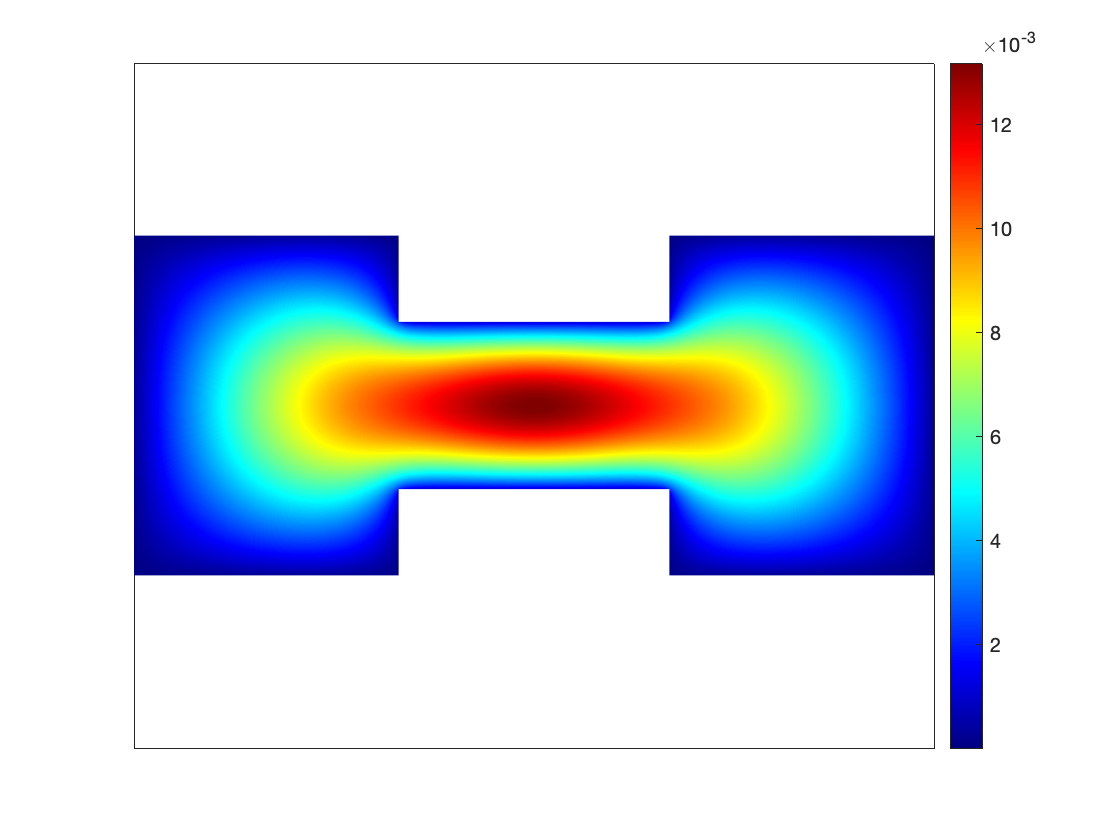}
				\end{minipage}
			}
			\subfigure[$t=0.5$]{
				\begin{minipage}[t]{0.3\linewidth}
					\centering
					\includegraphics[width=1\linewidth]{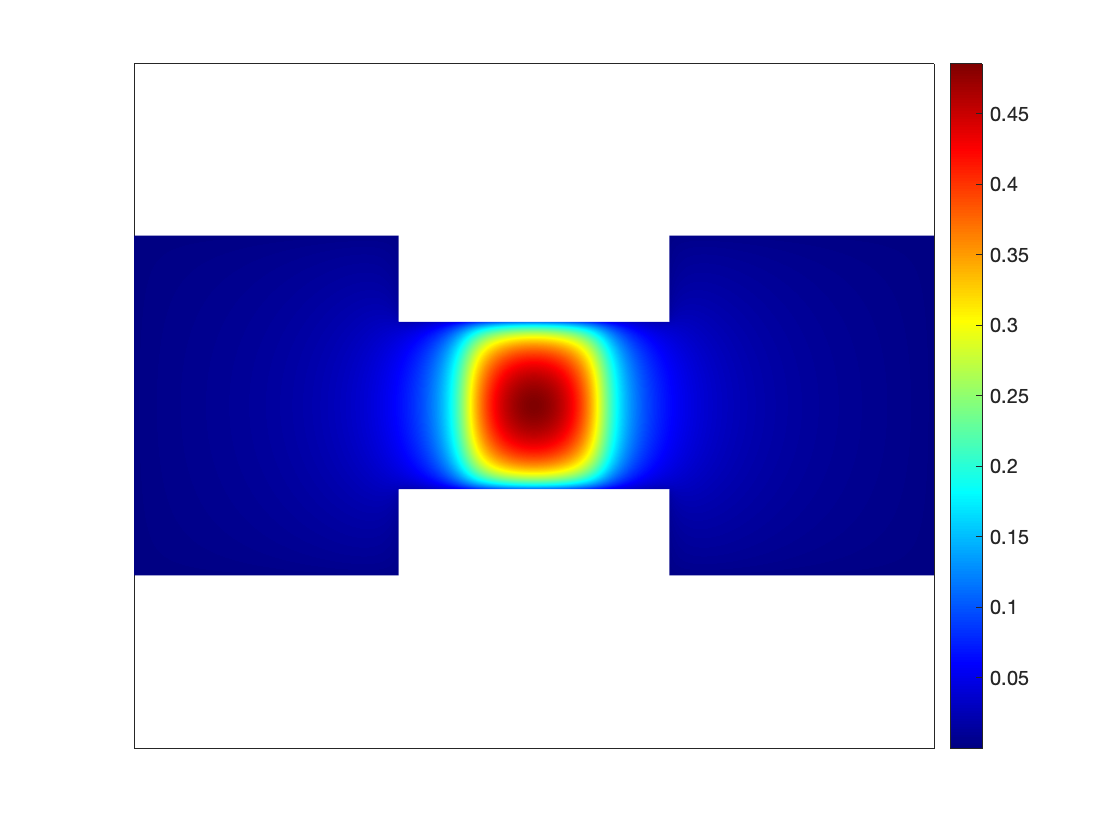}
				\end{minipage}
			}
			\subfigure[$t=0.5$]{
				\begin{minipage}[t]{0.3\linewidth}
					\centering
					\includegraphics[width=1\linewidth]{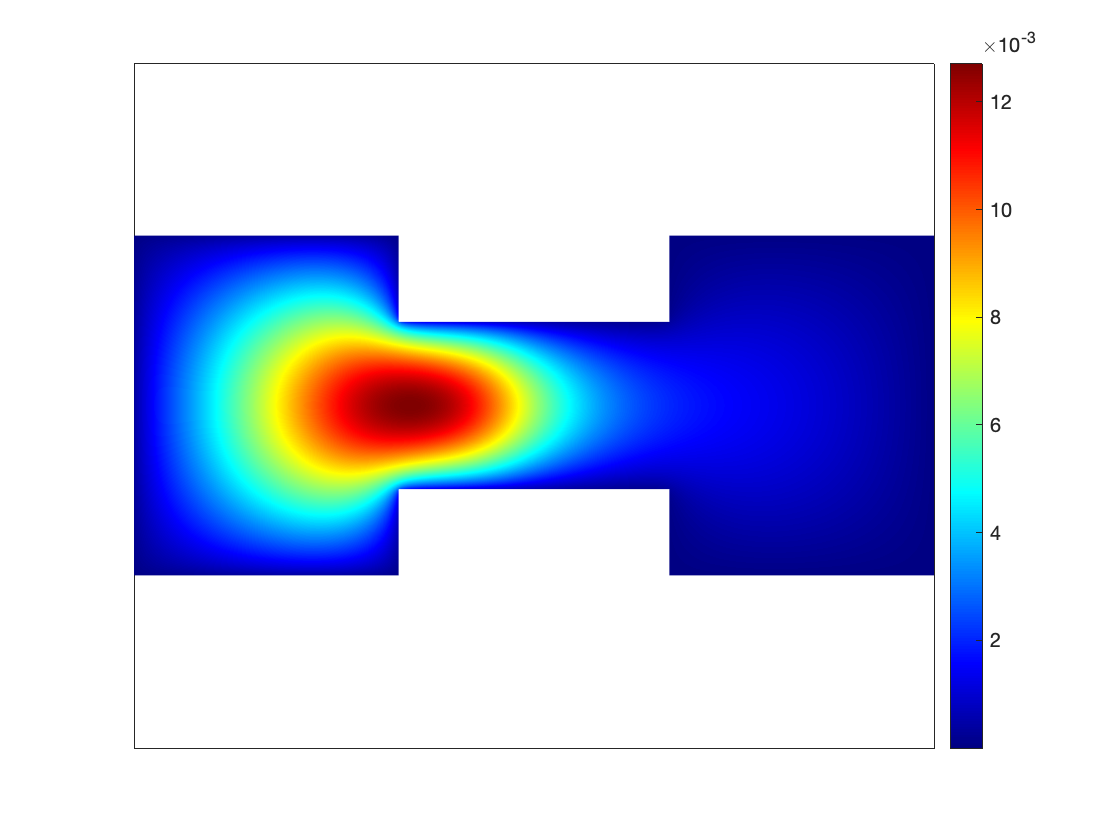}
				\end{minipage}
			}
			%%%%%%%%%%%%
			\subfigure[$t=1$]{
				\begin{minipage}[t]{0.3\linewidth}
					\centering
					\includegraphics[width=1\linewidth]{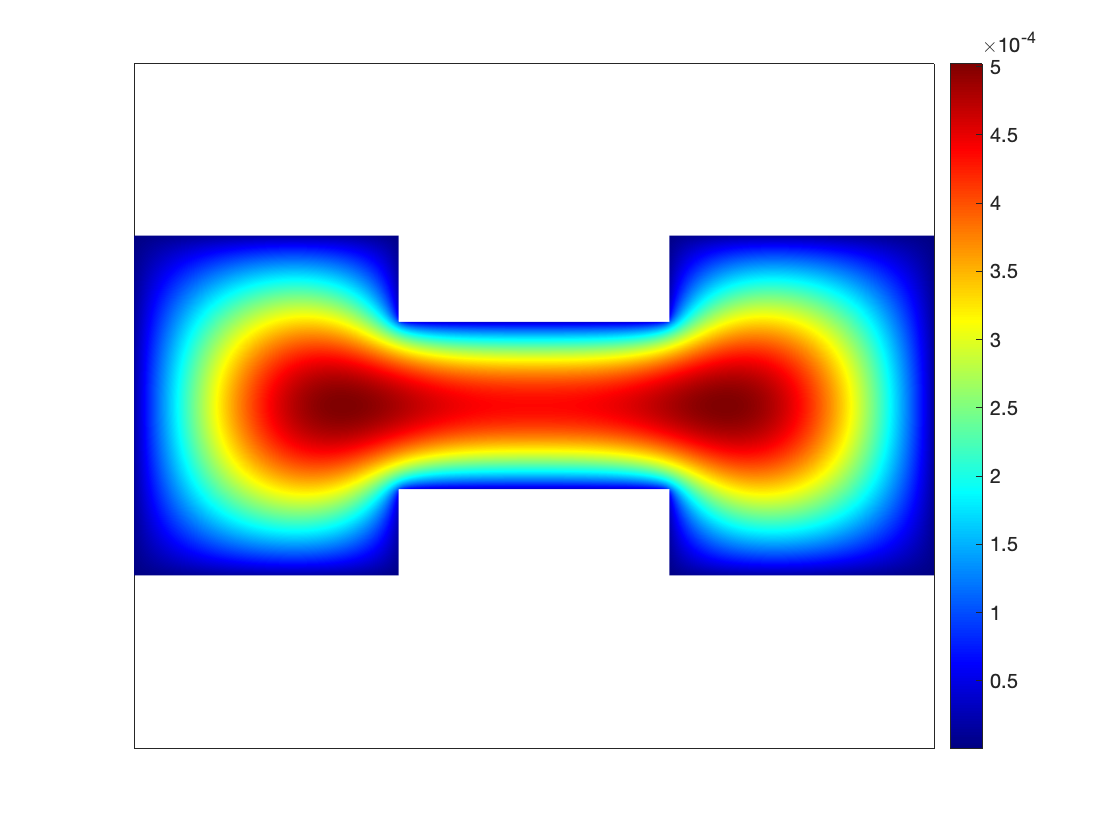}
				\end{minipage}
			}
			\subfigure[$t=1$]{
				\begin{minipage}[t]{0.3\linewidth}
					\centering
					\includegraphics[width=1\linewidth]{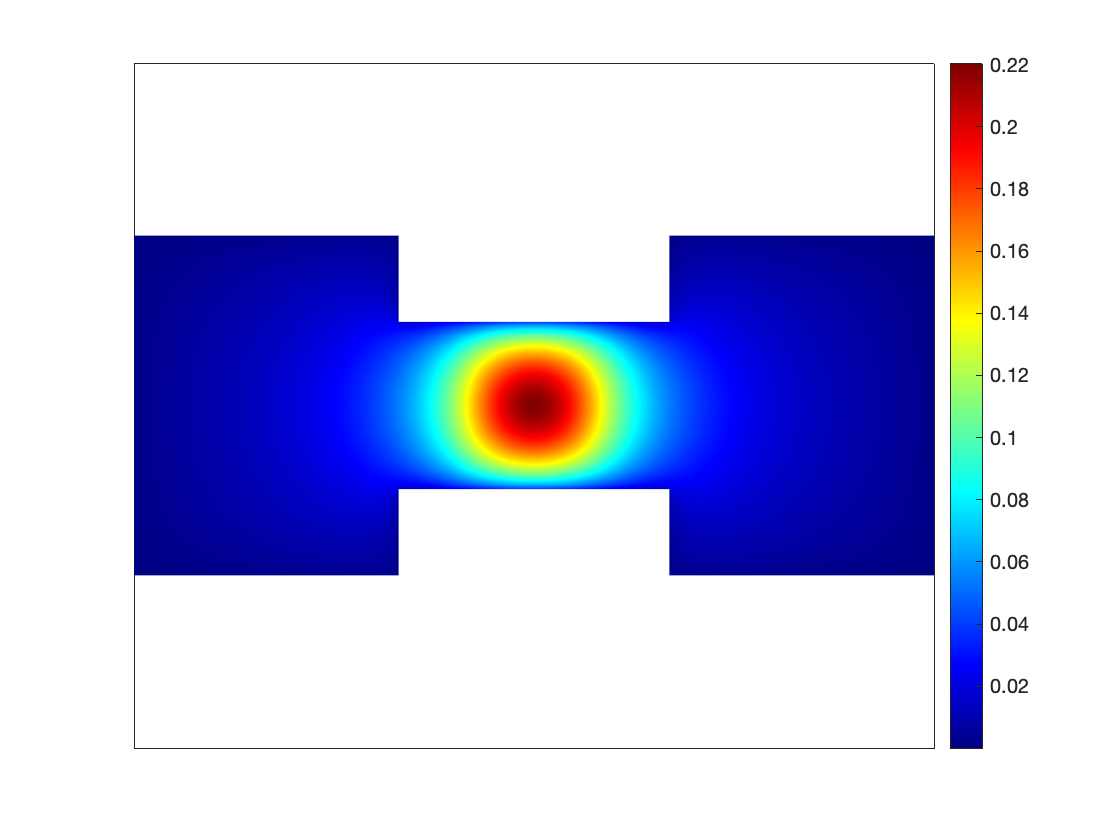}
				\end{minipage}
			}
			\subfigure[$t=1$]{
				\begin{minipage}[t]{0.3\linewidth}
					\centering
					\includegraphics[width=1\linewidth]{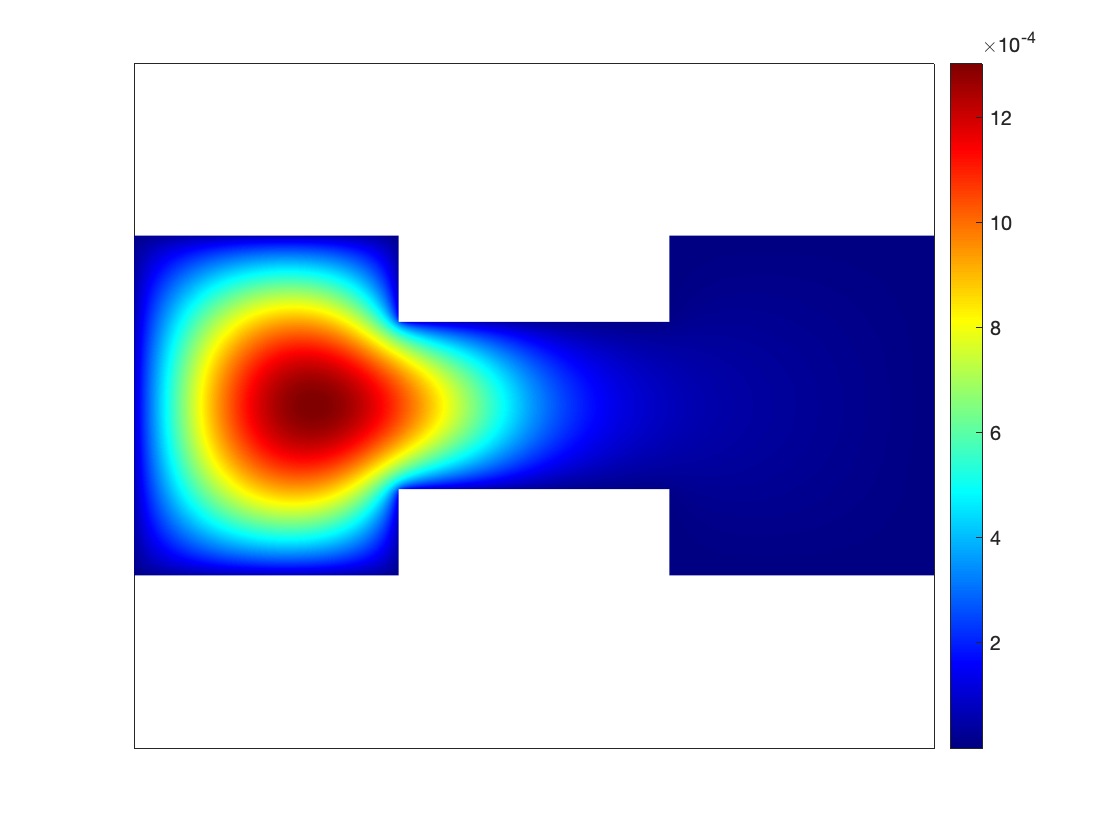}
				\end{minipage}
			}
			%%%%%%%%%%%%%%
			\subfigure[$t=2$]{
				\begin{minipage}[t]{0.3\linewidth}
					\centering
					\includegraphics[width=1\linewidth]{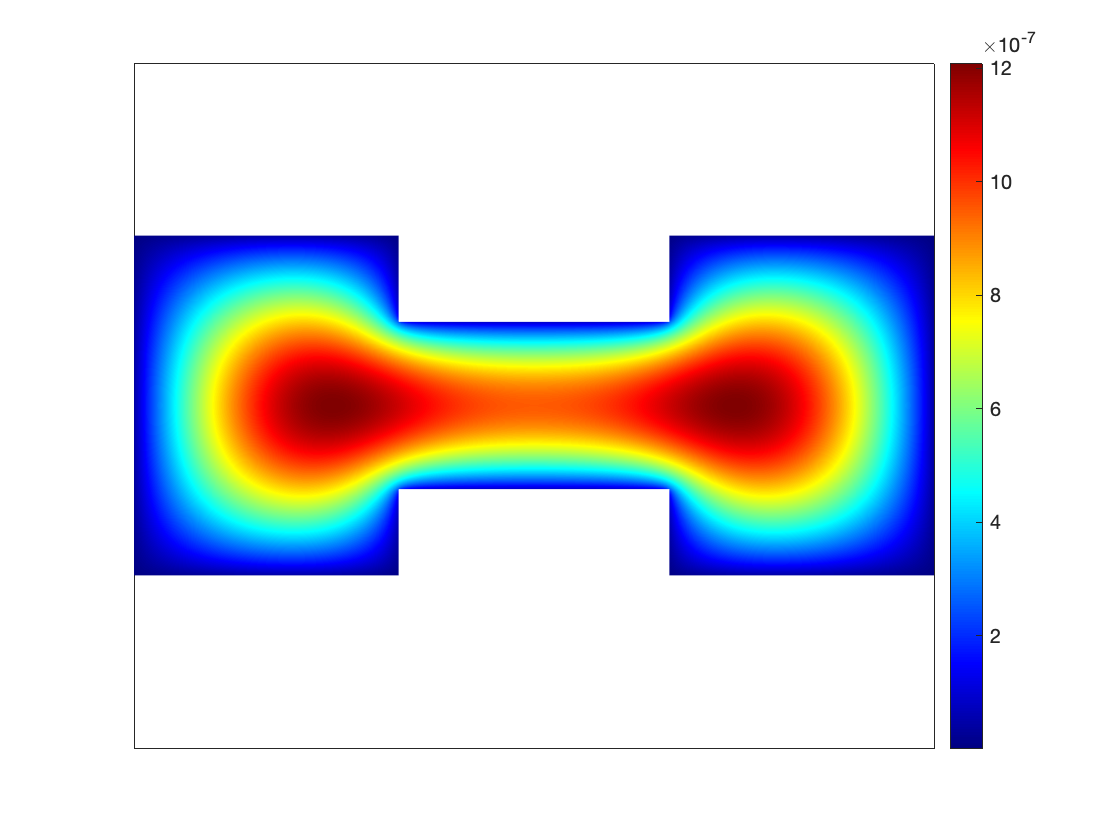}
				\end{minipage}
			}
			\subfigure[$t=2$]{
				\begin{minipage}[t]{0.3\linewidth}
					\centering
					\includegraphics[width=1\linewidth]{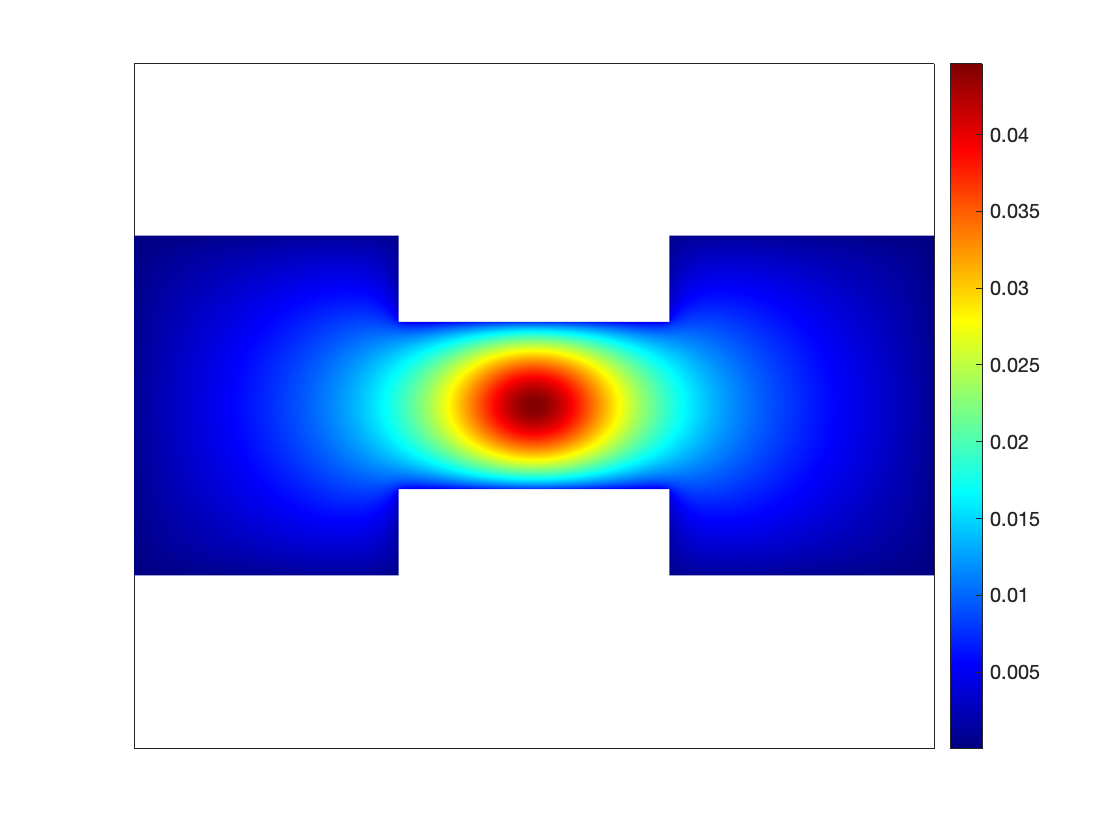}
				\end{minipage}
			}
			\subfigure[$t=2$]{
				\begin{minipage}[t]{0.3\linewidth}
					\centering
					\includegraphics[width=1\linewidth]{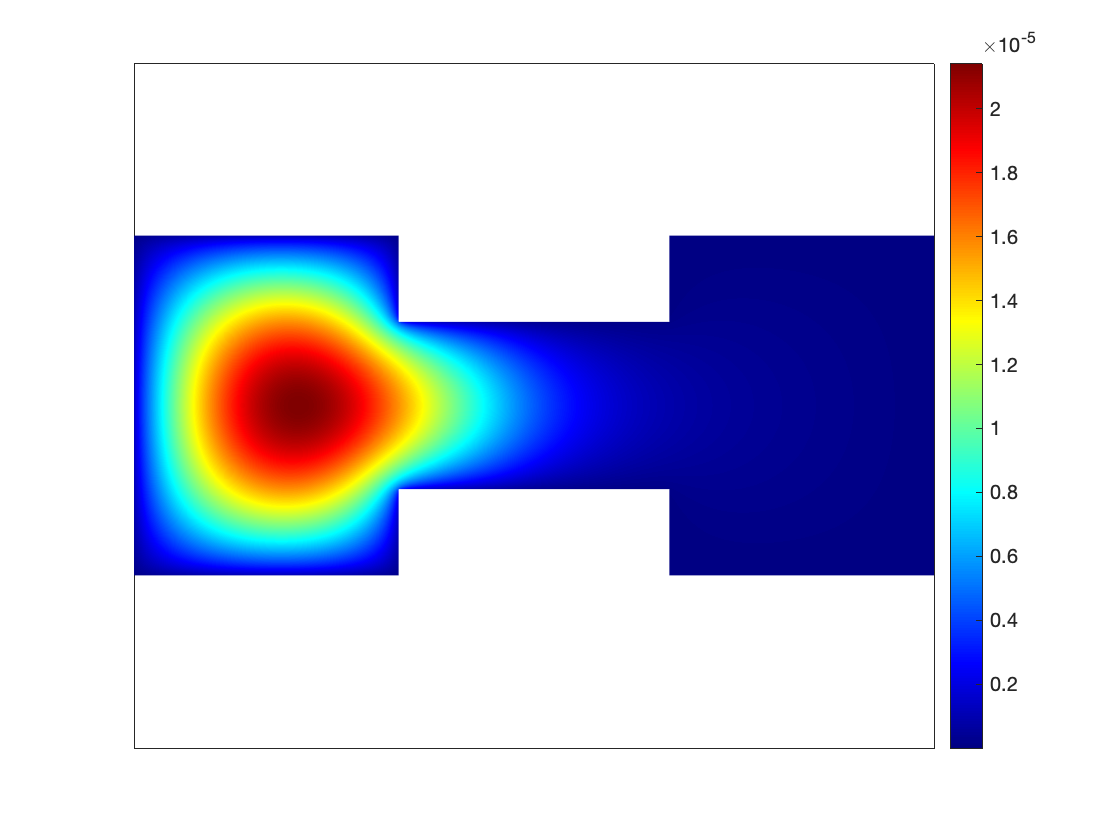}
				\end{minipage}
			}
			%%%%%%%%%%%%
			\caption{Dynamics of the coexistence of anomalous diffusion equation(left: $\alpha(x)=1.5+|x|/4; $ middle: $\alpha(x)=0.6+|x|/2;$ right: $\alpha(x)=0.3\tanh(10x_1)+1.7$. (Example \ref{Ex-general-domain}).}
			\label{VO_Lap_Coexistence_dynamics}
		\end{figure}
		
		\begin{exm}[Application to the time-dependent problem in 3D]
			\label{parabolic-eq-3-D}
			Consider the 3D time-dependent variable-order fractional diffusion problem  \cite{Minden-Ying-2020}:
			\begin{eqnarray}
				&&\partial_t u(x, t)=-(-\Delta)^{\alpha(x)/2}u(x,t) + f(x,t),  \quad \, x \in \Omega, \, t>0, \nonumber\\
				&&u(x, t)=0,  \quad \, x \in \mathbb{R}^3 \backslash \Omega,\,  t \geq 0,\nonumber \\
				&&u(x,t) = u_0(x), \quad  \, x \in \Omega.\nonumber
			\end{eqnarray}
		\end{exm}
		
		%Firstly, we consider a 3-D variable fractional eqution
		%$(-\Delta)^{\alpha/2}u = f \, on \, \Omega \in \mathbb{R}^3$. Here, the RHS is set as $\frac{2^{\alpha(x)}\Gamma(\frac{3+\alpha(x)}{2})}{\Gamma(\frac{3}{2})}_1F_1(\frac{3+\alpha(x)}{2};\frac{3}{2};-|x|^2)$ and the solution is known as $u(x_1,x_2,x_3)=\exp(-\|x\|^2))$. Table 7 shows the error and convergence order with different $\alpha(x)$.
		%
		%Next, we consider the 3-D time-dependent variable fractional diffusion problem:
		%\begin{eqnarray}
		%&&\partial_t u(\mathbf{x}, t)=-(-\Delta)^{\alpha/2}u(\mathbf{x},t) + f(\mathbf{x},t),  \text { for } \mathbf{x} \in \Omega, t>0, \nonumber\\
		%&&u(\mathbf{x}, t)=0,  \text { for } \mathbf{x} \in \mathbb{R}^3 \backslash \Omega, t \geq 0,\nonumber \\
		%&&u(\mathbf{x},t) = u_0(\mathbf{x}),\text { for } \mathbf{x} \in \Omega.\nonumber
		%\end{eqnarray}
		
		Here, the right hand side is set as $f(x) = 0$ with $\Omega = [-1,1]^3$ and the initial condition
		\begin{eqnarray}
			&&u_0(\mathbf{x})=\prod_{i=1}^{3}\frac{1}{4}(1+\cos(2\pi \upsilon_ix_i-\pi))^2,\nonumber
		\end{eqnarray}
		with $\upsilon_1=3, \upsilon_2=11, \upsilon_3=2$. We use the second-order finite difference approximation for spatial discretization and  a Crank-Nicolson method for the temporal discretization,  respectively. To demonstrate the efficiency of our methods, we present  the CPU time $t$ of solving the 3D linear system of equations $Ax=b$ by the BiCGSTAB method and corresponding BiCGSTAB iterations $n$ for a single time step with different $\alpha(x)$ in Table \ref{table8}. For convenience, at the bottom of Table \ref{table8} we give an estimate of the asymptotic decay rate of the error as $N$ increases, which is achieved by a least-squares fit of the log-error to log-$N$. It shows that for fixed $N$ and $\Delta t$, the time $t$ gets longer for larger $\alpha(x)$, because the stiffness matrix from larger $\alpha(x)$ has bigger conditional number and it will affect the BiCGSTAB iterations. Furthermore, the corresponding timing results are plotted in Figure \ref{VO_Lap_Time_3D_runtime}.
		%\begin{table}[h]
		%	\centering
		%	\caption{The convergence orders and errors of the finite difference scheme for $(-\Delta)^{\alpha(x)/2}u=f$ in 3-D by CG method. (Case 2 of Example 5.2)}
		%	\setlength{\tabcolsep}{18pt}{
			%		\begin{tabular}{lllllll} % 控制表格的格式
				%			\toprule[1pt]
				%			\multirow{2}{*}{$N$} & \multicolumn{2}{l}{$\alpha(x) = 1+||x||/20$}  & \multicolumn{2}{l}{$\alpha(x) = 1-0.5\tanh (||x||)$} &\multicolumn{2}{l}{$\alpha(x) = 1.5+||x||/20$}  \\
				%			\cmidrule(r){2-3}  
				%			\cmidrule(r){4-5}
				%			\cmidrule(r){6-7}
				%			& $E_\infty(h)$ & Order &  $E_\infty(h)$ & Order &  $E_\infty(h)$ & Order\\
				%			\midrule[0.25pt]
				%			$4^3$ &9.42e-01&  &9.91e-01 &  & 2.07e+00& \\
				%			$8^3$&1.79e-01&2.40  &1.81e-01&2.45 & 2.97e-01    &2.80\\
				%			$16^3$&3.55e-02&2.33  &3.94e-02&2.20  & 6.00e-02     &2.31\\
				%			$32^3$&8.30e-02&2.10  &8.30e-03&2.04  &1.43e-02    &2.06\\
				%			\bottomrule[1pt]
				%	\end{tabular}}
		%	\label{table7}
		%\end{table}
  \begin{table}[h]
        \centering
        \caption{Runtime $t$ in seconds and number of iterations $n$ required to compute 3-D time-dependent variable fractional diffusion problem for a single time step by BiCGSTAB method. (Example \ref{parabolic-eq-3-D}).}
		\setlength{\tabcolsep}{4pt}{
				\begin{tabular}{llllllllll} % 控制表格的格式
					\toprule[1pt]
					\multirow{2}{*}{$N^3$} &\multirow{2}{*}{$\Delta t$}  & \multicolumn{2}{l}{$\alpha(x) = 1-0.5\tanh (|x|)$}  & \multicolumn{2}{l}{$\alpha(x) = 1+|x|/4$} & \multicolumn{2}{l}{$\alpha(x) = 1.5+|x|/4$}& \multicolumn{2}{l}{$\alpha(x) = 1.6$}\\
					\cmidrule(r){3-4}  
					\cmidrule(r){5-6}
					\cmidrule(r){7-8}
					\cmidrule(r){9-10}
					& &$t$ & $n$ & $t$ &$n$ & $t$ &$n$& $t$ &$n$\\
					\midrule[0.25pt]
					$31^3$ & 1/32 &4.78e+01 & 13&5.33e+01&38 & 7.78e+01&94&6.31e-01 &61 \\
					$63^3$ & 1/64 &9.94e+01& 13 &1.86e+02&47 &4.58e+02&158 &6.82e+00 & 86\\
					$127^3$ & 1/128&6.23e+02&14  &1.70e+03&55 & 6.67e+03&243 &8.70e+01 &116\\
					$255^3$& 1/256&5.20e+03&14&1.77e+04 &63& 8.05e+04 &330 &1.03e+03 &153\\
					\midrule[0.25pt]
					\multicolumn{2}{l}{Rate}& 2.26&*&2.79&*& 3.35&* & 3.52&*\\
					\bottomrule[1pt]
			\end{tabular}}
			\label{table8}
		\end{table}
		\begin{figure}[h]
			\centering
			\centering
			\includegraphics[width=0.6\linewidth]{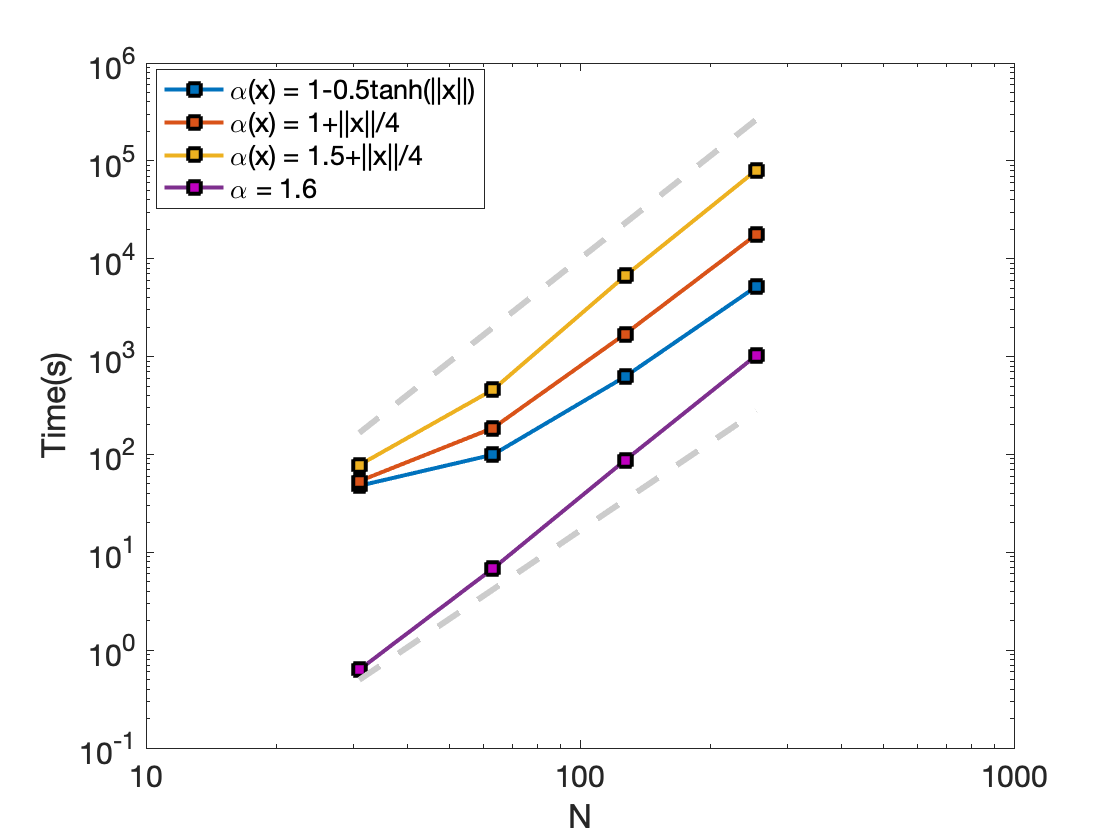}
			\caption[Figure3]{For 3D Example \ref{parabolic-eq-3-D}, we plot the runtime $t$ required for BiCGSTAB method to attain an accuracy of $\epsilon = 10^{-12}$ as in Table \ref{table8}. The top trend line is our computational time complexity $O(N^{3.5})$ and the bottom gray line is for the optimal complexity $O(N^3)$ (Here $N$ denotes the number of the grids in each spatial direction). }
			\label{VO_Lap_Time_3D_runtime} 
		\end{figure}

	 %%%%%%%%%%%%%%%%%%%%%%%%%%%%%%%%%%%%%%%%%%%%%%%%%%%%%%%%%%%%%%%%%%%%%%%%%%%%%%%%%%%%%%%%%%%%%%%%
	\section{Proofs}\label{sec-proofs}	
	In this section, we present the proofs of conclusions in Sections \ref{sec-FDM-approx} and 3.
 
\subsection{Proof of Theorem \ref{prop-equivalence-definitions}}
  \begin{proof}
		%For simplicity, we assume that the target function belongs to the Schwartz space.  
		By the definition of variable-order  fractional Laplacian \eqref{def-v-int-frac-lap}, we can rewrite it as follows
		\begin{eqnarray}
			(-\Delta)^{\alpha(x)/2} u= \frac{1}{2} c_{d,\alpha(x)}  \int_{\mathbb{R}^d} \frac{ 2u(x)-u(x+y)-u(x-y)} {|y|^{d+\alpha(x)}}\text{d}y, \quad \forall x\in \mathbb{R}^d. 
		\end{eqnarray}
  It remains to show that the definition of  variable-order  fractional Laplacian via Fourier transform
  \eqref{def-v-frac-lap} can be also written  in this form. 
  
		Recalling the identity (e.g., see \cite{bucur2016nonlocal})
		\begin{eqnarray}
			c_{d,\alpha(x)} \int_{\mathbb{R}^d} \frac{1- \cos(\xi \cdot y)}{|y|^{d+\alpha(x)}} \text{d}y = |\xi|^{\alpha(x)},
		\end{eqnarray}
		we have 	
		\begin{eqnarray}
			|\xi|^{\alpha(x)} \cdot	\mathcal{F} [u](\xi) &=&	c_{d,\alpha(x)} \int_{\mathbb{R}^d} \frac{1- \cos(\xi \cdot y)}{|y|^{d+\alpha(x)}} \text{d}y  \cdot	\mathcal{F} [u] (\xi) \notag\\
			&=&  \frac{1}{2} c_{d,\alpha(x)} \int_{ \mathbb{R}^d} \frac{2-e^{\mathbf{i} \xi \cdot y} -e^{-\mathbf{i} \xi \cdot y}}{|y|^{d+\alpha(x)}} \text{d}y  \cdot	\mathcal{F} [u] (\xi) \notag\\
			&=&  \frac{1}{2} c_{d,\alpha(x)} \int_{ \mathbb{R}^d} \frac{ \mathcal{F} \big[ 2u(x)-u(x-y)-u(x+y) \big]  (\xi)}{|y|^{d+\alpha(x)}} \text{d}y  .
		\end{eqnarray}
		Taking the inverse Fourier transform on both sides leads to 
		\begin{eqnarray}
			\mathcal{F}^{-1} \bigg( 	|\xi|^{\alpha(x)} \cdot	\mathcal{F} [u](\xi) \bigg)
			&=&  \frac{1}{2} c_{d,\alpha(x)} 	\mathcal{F}^{-1} \bigg(\int_{ \mathbb{R}^d} \frac{ \mathcal{F} \big[ 2u(x)-u(x-y)-u(x+y) \big]  (\xi)}{|y|^{d+\alpha(x)}} \text{d}y \bigg) \notag\\
			&=&  \frac{1}{2} c_{d,\alpha(x)} 	\int_{ \mathbb{R}^d}  \frac{\mathcal{F}^{-1}  \bigg( \mathcal{F}  \big[ 2u(x)-u(x-y)-u(x+y) \big]  (\xi) \bigg)}{|y|^{d+\alpha(x)}} \text{d}y  
			\notag\\
			&=&   \frac{1}{2} c_{d,\alpha(x)}  \int_{\mathbb{R}^d} \frac{ 2u(x)-u(x+y)-u(x-y)} {|y|^{d+\alpha(x)}}\text{d}y,
		\end{eqnarray}
		where we have used Fubini's theorem for the second equal sign. 
		This completes the proof. 
	\end{proof}

\subsection{Proof of Theorem \ref{thm-approximation-property}}

In this subsection, we present the proof for  the second-order approximation in Theorem \ref{thm-approximation-property}.  
 The semi-discrete Fourier transform is closely connected with the continuous Fourier transform.  The following lemma is needed in our proof. 
	\begin{lem}[Poisson summation formula]
		Let $u \in L^1 (\mathbb{R}^d) $ be such that its Fourier transform $ \mathcal{F}[u]$ is also absolutely integrable, and $u $ satisfies the two estimates $u(x) = \mathcal{ O} ((1+|x|) ^{-d-\varepsilon})$ and $ \mathcal{F}[u] (\xi) = \mathcal{O} ( (1+|\xi|)^{-d-\varepsilon})   $ for positive $\varepsilon $. Then we have the identity %equality
		\begin{equation*}
			\mathcal{F}_h [u_h] (\xi)= \sum_{\eta \in \frac{2\pi}{h}\mathbb{Z}^d} \mathcal{F}[u](\xi +  \eta), \quad \xi \in D_h=\big[ -\frac{\pi}{h}, \frac{\pi}{h}\big]^d. 	%\label{eq-possion-summation-formula}
		\end{equation*}	
		
	\end{lem}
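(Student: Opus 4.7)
The plan is to prove this identity by recognizing that the right-hand side
$$G(\xi) := \sum_{\eta \in (2\pi/h)\mathbb{Z}^d} \mathcal{F}[u](\xi + \eta)$$
is periodic with period $2\pi/h$ in each variable, expanding it as a multivariate Fourier series on the fundamental cube $D_h = [-\pi/h,\pi/h]^d$, and showing that its Fourier coefficients coincide with the samples $h^d u(-kh)$, which reassembles the semi-discrete Fourier transform $\mathcal{F}_h[u_h](\xi)$.

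First, I would verify that both sides make sense under the stated hypotheses. The decay estimate $\mathcal{F}[u](\xi) = \mathcal{O}((1+|\xi|)^{-d-\varepsilon})$ guarantees that the series defining $G$ converges absolutely and uniformly on compact sets, so $G$ is a continuous periodic function on $\mathbb{R}^d$. The pointwise estimate $u(x) = \mathcal{O}((1+|x|)^{-d-\varepsilon})$ ensures that $\sum_{j \in \mathbb{Z}^d} |u(jh)| < \infty$, so the series defining $\mathcal{F}_h[u_h](\xi) = h^d \sum_{j} e^{-\mathbf{i}\xi \cdot jh} u(jh)$ converges absolutely and uniformly in $\xi$.

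Next, since $G$ is periodic with fundamental domain $D_h$, I would write its Fourier series in the basis $\{e^{\mathbf{i} k \cdot \xi h}\}_{k \in \mathbb{Z}^d}$ appropriate to that period. The $k$-th Fourier coefficient is
$$c_k = \frac{h^d}{(2\pi)^d} \int_{D_h} G(\xi)\, e^{-\mathbf{i} k \cdot \xi h}\, d\xi.$$
Since $e^{-\mathbf{i} k \cdot \xi h}$ is itself periodic with period $2\pi/h$, the key unfolding step is
$$\int_{D_h} \sum_{\eta} \mathcal{F}[u](\xi+\eta)\, e^{-\mathbf{i} k \cdot \xi h}\, d\xi = \int_{\mathbb{R}^d} \mathcal{F}[u](\xi)\, e^{-\mathbf{i} k \cdot \xi h}\, d\xi,$$
where the interchange of sum and integral is justified by absolute integrability of $\mathcal{F}[u]$. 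Applying Fourier inversion to the right-hand side yields $(2\pi)^d u(-kh)$, so $c_k = h^d u(-kh)$. Substituting back and reindexing $j = -k$ gives
$$G(\xi) = \sum_{k \in \mathbb{Z}^d} h^d u(-kh)\, e^{\mathbf{i} k \cdot \xi h} = h^d \sum_{j \in \mathbb{Z}^d} u(jh)\, e^{-\mathbf{i} j \cdot \xi h} = \mathcal{F}_h[u_h](\xi).$$

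The main technical obstacle is justifying pointwise equality of $G$ with its Fourier series. Dini-type or dominated convergence arguments using the decay of $\mathcal{F}[u]$ give absolute summability of the Fourier coefficients $\{c_k\}$ (via the decay of $u(-kh)$), which in turn implies that the Fourier series converges uniformly and equals the continuous function $G$ pointwise. Once this is in place, the chain of equalities above closes the argument and yields the identity for every $\xi \in D_h$.
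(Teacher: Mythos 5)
Your proof is correct. The underlying mathematics is the same as the paper's -- both rest on the Poisson summation formula -- but you take a more self-contained route: the paper simply cites the classical formula on the unit lattice $\mathbb{Z}^d$ (from Pinsky), notes that the stated decay hypotheses upgrade the $L^1$ convergence to absolute, pointwise convergence, and then rescales via the dilation property of the Fourier transform to pass to $h\mathbb{Z}^d$. You instead prove the periodization identity from scratch, directly on the lattice $h\mathbb{Z}^d$: you form $G(\xi)=\sum_{\eta}\mathcal{F}[u](\xi+\eta)$, compute its Fourier coefficients on $D_h$ by the unfolding trick and Fourier inversion (which is where $u\in L^1$ and $\mathcal{F}[u]\in L^1$ are used), and then invoke absolute summability of the coefficients $c_k=h^d u(-kh)$ together with continuity of $G$ to get pointwise equality. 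All the steps check out, including the interchange of sum and integral and the uniqueness argument for continuous periodic functions with matching Fourier coefficients. The only implicit point worth flagging in both your argument and the paper's is that $u$ must be identified with its continuous representative for the pointwise samples $u(jh)$ and the pointwise Fourier inversion to be meaningful; this is standard and is guaranteed here since $\mathcal{F}[u]\in L^1$.
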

	\begin{proof}
		Recall the Poisson summation formula (e.g., see Chapter 4 in \cite{Pinsky-2002}.)   %\cite{ book-2003-page 205}
		\begin{equation}
			\sum_{j\in \mathbb{Z}^d} u(j) e^{-\mathbf{i} \xi\cdot j}= \sum_{j\in \mathbb{Z}^d} \mathcal{F}[u]( \xi + 2\pi j), \quad  \xi \in [-\pi,\pi]^d. \label{eq-possion-summation-formula}
		\end{equation}
		The convergence of the sums is  in $L^1 ([-\pi,\pi]^d)$. If $u $ satisfies the two estimates $|u(x)| = \mathcal{ O} ((1+|x|) ^{-d-\varepsilon})$ and $| \mathcal{F}[u] (\xi)| = \mathcal{O} ( (1+|\xi|)^{-d-\varepsilon})   $ for positive $\varepsilon $, then the two sums above are absolutely convergent and their limiting functions are continuous. Therefore the above identity holds pointwise.
		By the scaling property of the Fourier transform, we have
		\begin{equation*}
			\sum_{j h\in h\mathbb{Z}^d} u(jh) e^{-\mathbf{i} \xi\cdot jh}=	\sum_{j\in \mathbb{Z}^d} u(jh) e^{-\mathbf{i} \xi\cdot jh}= \frac{1}{h^d}\sum_{j\in \mathbb{Z}^d} \mathcal{F}[u]\bigg( \frac{\xi}{h} + \frac{2\pi}{h} j\bigg).  
		\end{equation*}
		By the definition of the semi-discrete transform \eqref{def-discrete-fouier-transform}, we can obtain the desired result. 
	\end{proof}
	The Poisson summation formula is essential   to prove the convergence of our discrete approximation.
By Taylor expansion,   it is not hard to prove  the following estimates for $M_h(\xi)$ in \eqref{def-multiplier}:
	\begin{eqnarray}
		&& \big(M_h(\xi)\big) ^{\alpha(x)/2} \leq c|\xi|^{\alpha(x)}, \label{def-multiplier-boundeness} \\
		&&  | M_h(\xi) ^{\alpha(x)/2} - |\xi|^{\alpha(x)}|  \leq ch^2    \label{def-multiplier-equivalence}
	\end{eqnarray}
	for $ \xi\in D_h$ and sufficiently small $h$.     Here and throughout the following, $c$ is a  constant independent of the step-size $h$ and the function $u$ but may change from line to line.

	\begin{proof}
		For 	$u \in  \mathcal{B}^{s+\alpha_{\max}} (\mathbb{R}^d)$ with $s \leq 2$, it is clear that $  u	\in C(\mathbb{R}^d) $ and the grid function $u_h(x)$ is well defined. 		We use the semi-Fourier analysis technique to estimate the convergence of the discrete operator.  
		For $x\in \Omega_h$, 
		\begin{eqnarray}
			&&	(-\Delta _h)^{\alpha(x)/2} u(x) -(-\Delta )^{\alpha(x)/2} u(x)  \notag\\
			& =& \frac{1}{(2\pi)^{d}} \int_{D_h} e^{\mathbf{i }\xi \cdot x }  M_h(\xi) ^{ \alpha(x)/2}  \mathcal{F}_h [u_h](\xi) \text{d}\xi- \frac{1}{(2\pi)^d } \int_{\mathbb{R}^d} e^{ \mathbf{i} \xi\cdot x }  |\xi|^{\alpha(x)} \mathcal{F} [u] (\xi) \text{d}\xi  \notag\\
			&:=& I_1+ I_2 +I_3 \label{def-sum-of-four-iterms},
		\end{eqnarray}
		where each integral is defined below
		\begin{eqnarray*}
			&& 	I_1 := 	\frac{1}{(2\pi)^{d}} \int_{D_h} e^{\mathbf{i }\xi \cdot x }  M_h(\xi) ^{\alpha(x)/2}  \big( \mathcal{F}_h [u_h](\xi) - \mathcal{F} [u] (\xi) \big ) \text{d}\xi , \\
			&& I_2 := \frac{1}{(2\pi)^{d}} \int_{D_h} e^{\mathbf{i }\xi \cdot x }  \big( M_h(\xi) ^{ \alpha(x)/2}  - |\xi|^{\alpha(x)}\big) \mathcal{F} [u](\xi) \text{d}\xi,
			\\
			&& I_3 := -  \frac{1}{(2\pi)^d } \int_{D_h^c} e^{ \mathbf{i} \xi\cdot x }  |\xi|^{\alpha(x)} \mathcal{F} [u] (\xi) \text{d}\xi .
		\end{eqnarray*}
		We estimate each term in \eqref{def-sum-of-four-iterms}.
		For the first item $I_1$, we need to estimate the difference  $\mathcal{F}_{h} [u_h] (\xi) -F[u] (\xi) $ in the integrand.  Noting the assumption $u\in \mathcal{B}^{s+\alpha_{\max}} (\mathbb{R}^d)$ and  by the Poisson summation formula, we have 
		\begin{eqnarray*}
			\abs{\mathcal{F}_h [u_h] (\xi) - 	\mathcal{F} [u] (\xi) }&=& \abs{\sum_{\eta \in \frac{2\pi}{h}\mathbb{Z}^d/ \{0\}} \mathcal{F}[u](\xi +  \eta)} \notag \\
			&\leq & \max_{\eta \in \frac{2\pi}{h}\mathbb{Z}^d/ \{0\}} |\xi+\eta|^{-s}  \cdot  \sum_{\eta \in \frac{2\pi}{h}\mathbb{Z}^d/ \{0\}} |\xi+\eta|^{s} \abs{\mathcal{F}[u](\xi +  \eta)} \notag \\
			&\leq & \frac{h^{s}}{\pi^{s}}    \sum_{\eta \in \frac{2\pi}{h}\mathbb{Z}^d/ \{0\}} |\xi+\eta|^{s} |\mathcal{F}[u](\xi +  \eta)|.  
		\end{eqnarray*}
		Plugging in $I_1$ and using the equivalence \eqref{def-multiplier-equivalence},  we have 
		\begin{eqnarray*}
			\abs{I_1} & \leq & c h^{s} \int_{D_h}  |\xi| ^{\alpha (x) }  \bigg(\sum_{\eta \in \frac{2\pi}{h}\mathbb{Z}^d/ \{0\}} |\xi+\eta|^{s} |\mathcal{F}[u](\xi +  \eta)| \bigg ) \text{d}\xi  \notag \\
			&\leq &  c h^{s} \int_{\mathbb{R}^d}  |\xi| ^{\alpha (x) +s}  |\mathcal{F}[u](\xi)| \text{d}\xi  \notag\\
			&\leq &  c h^{s}  \int_{|\xi|\leq 1}  |\xi| ^{\alpha (x) +s}  |\mathcal{F}[u](\xi) | d\xi +c h^{s}\int_{|\xi|>1 }  |\xi| ^{\alpha (x) +s}  |\mathcal{F}[u](\xi)| \text{d}\xi 	
			\notag\\
			&\leq &  c h^{s} \int_{|\xi|\leq 1}  |\mathcal{F}[u](\xi) |d\xi +c h^{s}
   \int_{|\xi|>1 }  |\xi| ^{\alpha_{\max}  +s}  |\mathcal{F}[u](\xi)| \text{d}\xi 
			\notag\\
			&\leq &  c h^{s} \|u\|_{ \mathcal{B}^{\alpha_{\max} +s}}	.
		\end{eqnarray*}
		For the second term similarly we have 
		\begin{eqnarray*}
			\abs{I_2} := \frac{1}{(2\pi)^{d}} \abs{\int_{D_h} e^{\mathbf{i }\xi \cdot x }  \big( M_h(\xi) ^{ \frac{\alpha (x) }{2}}  - |\xi|^{\alpha(x)}\big) \mathcal{F} [u](\xi) \text{d}\xi} \leq c h^2   \int_{\mathbb{R}^d}  |\xi| ^{\alpha (x) }  |\mathcal{F}[u](\xi)| \text{d}\xi  \leq  c h^{s} \|u\|_{ \mathcal{B}^{\alpha_{\max} +s}}.
		\end{eqnarray*} 
		For the third term, we have 
		\begin{eqnarray*}
			\abs{I_3}  &=&  \abs{- \frac{1}{(2\pi)^d } \int_{D_h^c} e^{ \mathbf{i} \xi\cdot x }  |\xi|^{\alpha(x) +s } |\xi|^{-s} \mathcal{F} [u] (\xi) \text{d}\xi}   \\
			&\leq& \frac{h^{s}}{{(2\pi)}^{d} {\pi^s}} \int_{D_h^c}  |\xi|^{\alpha(x) +s }  |\mathcal{F} [u] (\xi)| \text{d}\xi   \\
   &\leq& c h^{s}   \int_{\mathbb{R}^d}  |\xi| ^{\alpha (x) +s} | \mathcal{F}[u](\xi) |\text{d}\xi  
    \leq   c h^{s} \|u\|_{ \mathcal{B}^{\alpha_{\max} +s}}.
		\end{eqnarray*}
		Combining the estimates of $I_i$'s together leads to the desired result.  
	\end{proof}

 \iffalse
	\begin{rem}
		The regularity assumption can be relaxed to {\color{red}weak???}
 Sobolev spaces, e.g.,  $ u\in  H^{s +\alpha_{\max}/2}$ and in this case  the estimate \eqref{thm-convergence-order} has to be understood in the sense of distribution.  However, in order to avoid the distraction of the main ideas due to the complicated technical discussions,  we will not pursue along this direction  at the current work and leave the rigorous analysis of the general class of functions for our future work.   We will numerically illustrate the accuracy of the proposed approximation for the general class of functions; see Section \ref{sec-num-experm}.
	\end{rem}
%\subsection{Proof the stability and convergence in 1D}
\fi

%%%%%%%%%%%%%%%%%%%%%%%%%%%%%%%%%%%%%%%%%%%%%%%%%%%%%%%%%%%%%%%%%%%%%%%%%%%%%%%%%%%%%%%%%%%%%%%%
  \subsection{Proof of Theorem \ref{thm-stability-convergence}}
		Here we  analyze the  stability and convergence  of the  finite difference scheme  for the fractional elliptic equation  on the unit interval $\Omega=(0,1)$.  For  simplicity, we assume that the variable coefficient $b>0$.
		% has a positive lower bound, that is 
		%\begin{eqnarray}
		%	b(x) \geq b_{\min}>0.
		%\end{eqnarray}

		The stiffness matrix associated with the discrete fractional Laplacian operator with the variable-order is	
		\[
		S^{(\mathbf{\alpha})} :=
		\left[ {\begin{array}{cccc}
				a^{(\alpha_1)}_{0}/h^{\alpha_1} & a^{(\alpha_1)}_{1}/h^{\alpha_1} & \cdots &a^{(\alpha_1)}_{N-2}/h^{\alpha_1}\\
				a^{(\alpha_2)}_{-1}/h^{\alpha_2} & a^{(\alpha_2)}_{0}/h^{\alpha_2} & \cdots &a^{(\alpha_2)}_{N-3}/h^{\alpha_2}\\
				\vdots & \vdots & \ddots & \vdots\\
				a^{(\alpha_{N-1})}_{2-N}/h^{\alpha_{N-1}} & a^{(\alpha_{N-1})}_{3-N}/h^{\alpha_{N-1}} & \cdots &a^{(\alpha_{N-1})}_{0}/h^{\alpha_{N-1}}\\
		\end{array} } \right]_{(N-1)\times (N-1)}.
		\]
		
		%Next we prove that $A$ is a monotone matrix.
		%{Due to technique difficulty, we only provide the analysis for the constant case and leave the variable coefficient $\mu$ for future exploration.} 

		%	\subsection{The properties of the coefficients}
		
		We use the maximum  value principle  to analyze the stability of the scheme. To this end, we need to examine the properties of the coefficients in the discrete fractional Laplacian defined in \eqref{def-quasi-convolution-approximation}. 
		
		The properties of the coefficients are stated in the following lemma. 
		\begin{lem}\label{lem-properties-coefficients}
			For the coefficients defined in \eqref{def-a_k_alpha}, we have the following properties. 
			\begin{enumerate}
				\item For fixed  $\alpha_j \in (0,2)$,  $\alpha_0^{(\alpha_j)}>0$ and $a_n^{(\alpha_j)}= a_{-n}^{(\alpha_j)}<0$ for $n\neq 0,\ j=1,\cdots, N-1$. 
				
				\item The summation of the coefficients equal to zero, that is  $\sum_{n}a_n^{(\alpha_j)} =0$. 
				
				\item  We have the estimates 
				\begin{equation}
					\frac{c_1}{ |n|^{\alpha_j+1}}\leq 	|a_{n}^{(\alpha_j)}|\leq   \frac{c_2}{ |n|^{\alpha_j+1}},\quad \sum_{n\geq N} |a_{n}^{(\alpha_j)}| \geq c_3 \frac{1}{N^{\alpha_j} }.
				\end{equation}
			\end{enumerate}
		\end{lem}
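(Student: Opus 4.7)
\textbf{Proof proposal for Lemma \ref{lem-properties-coefficients}.}

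The plan is to work directly from the explicit 1D formula derived in Subsection \ref{subsection-impletation-coefficients},
\begin{equation*}
a_n^{(\alpha_j)} = \frac{(-1)^n \Gamma(\alpha_j+1)}{\Gamma(\alpha_j/2 + n + 1)\,\Gamma(\alpha_j/2 - n + 1)},
\end{equation*}
together with the Fourier representation
\begin{equation*}
(2|\sin(\eta/2)|)^{\alpha_j} \;=\; \sum_{n\in\mathbb{Z}} a_n^{(\alpha_j)} e^{\mathbf{i} n\eta}.
\end{equation*}
Symmetry $a_n^{(\alpha_j)} = a_{-n}^{(\alpha_j)}$ is immediate from the formula since the denominator is invariant under $n\mapsto -n$ and $(-1)^{-n}=(-1)^n$.

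For part (1), the case $n=0$ gives $a_0^{(\alpha_j)}=\Gamma(\alpha_j+1)/\Gamma(\alpha_j/2+1)^2>0$. For $n=1$, the argument $\alpha_j/2$ lies in $(0,1)$, so both Gamma factors are positive and $(-1)^1$ yields $a_1^{(\alpha_j)}<0$. For $n\ge 2$ the argument $\alpha_j/2 - n + 1$ is negative, so I would use Euler's reflection formula
\begin{equation*}
\Gamma(\alpha_j/2 - n + 1) \;=\; \frac{\pi}{\sin\!\bigl(\pi(n-\alpha_j/2)\bigr)\,\Gamma(n-\alpha_j/2)} \;=\; \frac{(-1)^{n-1}\pi}{\sin(\pi\alpha_j/2)\,\Gamma(n-\alpha_j/2)},
\end{equation*}
using $\sin(\pi n - \pi\alpha_j/2)=(-1)^{n-1}\sin(\pi\alpha_j/2)$. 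Substituting back, the two factors of $(-1)^n$ combine to give
\begin{equation}\label{eq:clean-sign-form}
a_n^{(\alpha_j)} \;=\; -\,\frac{\Gamma(\alpha_j+1)\sin(\pi\alpha_j/2)}{\pi}\cdot\frac{\Gamma(n-\alpha_j/2)}{\Gamma(n+\alpha_j/2+1)},
\end{equation}
which is manifestly negative since every factor after the minus sign is positive for $\alpha_j\in(0,2)$ and $n\ge 1$. This is the main subtlety of part (1): the alternating sign $(-1)^n$ and the reflection formula conspire to produce a uniform sign.

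Part (2) follows by evaluating the Fourier identity above at $\eta=0$: the left-hand side is $0$, while the right-hand side formally equals $\sum_n a_n^{(\alpha_j)}$. Since the tail decay from part (3) gives absolute convergence (the series behaves like $\sum n^{-\alpha_j-1}$ for $\alpha_j>0$), evaluation at $\eta=0$ is justified.

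For part (3), I would apply the standard asymptotic for Gamma ratios,
\begin{equation*}
\frac{\Gamma(n-\alpha_j/2)}{\Gamma(n+\alpha_j/2+1)} \;=\; n^{-\alpha_j-1}\bigl(1 + O(n^{-1})\bigr),
\end{equation*}
to \eqref{eq:clean-sign-form}, which yields $|a_n^{(\alpha_j)}| \sim C(\alpha_j)\, n^{-\alpha_j-1}$ with $C(\alpha_j):=\Gamma(\alpha_j+1)\sin(\pi\alpha_j/2)/\pi>0$. Since $\alpha_j$ ranges over a compact sub-interval of $(0,2)$ (by assumption \eqref{assumption-alpha-function}), the constants $c_1,c_2$ can be chosen uniformly, giving the two-sided bound. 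The tail estimate follows by comparison with the integral:
\begin{equation*}
\sum_{n\ge N}|a_n^{(\alpha_j)}| \;\ge\; c_1\sum_{n\ge N} n^{-\alpha_j-1} \;\ge\; c_1\int_N^\infty x^{-\alpha_j-1}\,dx \;=\; \frac{c_1}{\alpha_j}\,N^{-\alpha_j},
\end{equation*}
and again the uniformity in $\alpha_j$ from \eqref{assumption-alpha-function} provides a single $c_3$. The main obstacle is really only the sign analysis in (1) for $n\ge 2$; once \eqref{eq:clean-sign-form} is established, parts (2) and (3) are routine consequences of the Fourier representation and Stirling's asymptotics.
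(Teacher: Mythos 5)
Your proof is correct. Note, however, that the paper does not actually prove this lemma: its ``proof'' consists of a single sentence deferring to the reference \cite{Sun-Gao-2020}, so there is no in-paper argument to compare against. What you supply is the standard self-contained derivation for these fractional centered-difference weights, and it holds up under checking: the sign identity \eqref{eq:clean-sign-form} is correct (the reflection formula with $z=n-\alpha_j/2$ gives $\sin(\pi n-\pi\alpha_j/2)=(-1)^{n-1}\sin(\pi\alpha_j/2)$, and the resulting expression is consistent with the direct $n=1$ computation), the evaluation of the Fourier series of $(2|\sin(\eta/2)|)^{\alpha_j}$ at $\eta=0$ legitimately yields part (2) once absolute convergence is secured by part (3) (and you correctly flag that (3) does not depend on (2), so there is no circularity), and the Gamma-ratio asymptotics plus the integral comparison give part (3). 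Two points you handle that are easy to overlook and that a bare citation hides: first, the constant $C(\alpha_j)=\Gamma(\alpha_j+1)\sin(\pi\alpha_j/2)/\pi$ degenerates as $\alpha_j\to 0$ or $\alpha_j\to 2$, so the uniformity of $c_1,c_2,c_3$ genuinely requires the standing assumption \eqref{assumption-alpha-function} that $\alpha(x)$ stays in a compact subinterval of $(0,2)$ — this matters because the stability proof of Theorem \ref{thm-stability-convergence} uses $c_3$ uniformly over all grid points $j$; second, the two-sided bound for small $n$ follows from positivity and finiteness of the finitely many remaining terms, which is implicit in your ``$\sim$'' but worth a sentence. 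In short, your argument is a valid replacement for the missing proof rather than an alternative to an existing one.
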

		\begin{proof}
			For proof, the interested readers may refer to \cite{Sun-Gao-2020}.
		\end{proof}

		\begin{lem}%[The Barrier lemma]
  \label{lem:barrier}
			Let $A$ be a monotone matrix  of order $n$ and $v$ is a normalized vector, $\|v\|_{l^\infty}:= \max_{j}\{v_j\}=1$ such that $\min_{j} (Av)_j \geq \beta$ for some positive scalar $\beta$. Then we have 
			$$ \|A^{-1}\|_{l^\infty} \leq 1/\beta .$$
		\end{lem}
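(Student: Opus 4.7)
The plan is to exploit the fact that monotonicity of $A$ is equivalent to $A^{-1}$ having nonnegative entries, which lets us turn the componentwise inequality $Av \geq \beta\mathbf{1}$ into a componentwise bound on $A^{-1}\mathbf{1}$, where $\mathbf{1}$ denotes the all-ones vector. Since $\|A^{-1}\|_{l^\infty}$ equals the maximum absolute row sum and all entries of $A^{-1}$ are nonnegative, this quantity coincides with $\|A^{-1}\mathbf{1}\|_{l^\infty}$, so an $L^\infty$ bound on $A^{-1}\mathbf{1}$ is exactly what we want.

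First, I would record the equivalence $\|A^{-1}\|_{l^\infty} = \|A^{-1}\mathbf{1}\|_{l^\infty}$ coming from monotonicity. Next, the hypothesis $\min_j (Av)_j \geq \beta$ reads $Av \geq \beta\mathbf{1}$ componentwise. Applying the nonnegative matrix $A^{-1}$ to both sides preserves the inequality, yielding $v \geq \beta A^{-1}\mathbf{1}$, equivalently $A^{-1}\mathbf{1} \leq v/\beta$. Taking $l^\infty$ norms and using $\|v\|_{l^\infty}=1$ gives $\|A^{-1}\mathbf{1}\|_{l^\infty} \leq 1/\beta$, which combined with the first step yields the conclusion.

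There is no serious obstacle here; the only subtle point is to remember that the ordering of vectors is preserved only under multiplication by a matrix with nonnegative entries, which is precisely the content of monotonicity. Everything else is a one-line manipulation, so the proof amounts to three short displayed inequalities and a concluding sentence.
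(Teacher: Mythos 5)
Your proof is correct and is the standard barrier-function argument: monotonicity gives $A^{-1}\ge 0$ entrywise, so $\|A^{-1}\|_{l^\infty}=\|A^{-1}\mathbf{1}\|_{l^\infty}$ and applying $A^{-1}$ to $Av\ge\beta\mathbf{1}$ yields $A^{-1}\mathbf{1}\le v/\beta\le \mathbf{1}/\beta$; the paper states this lemma without proof, so your argument supplies exactly the missing justification. Note also that the paper's (one-sided) normalization $\|v\|_{l^\infty}=\max_j\{v_j\}=1$ is precisely the bound $v_j\le 1$ that your last step needs, so no absolute values are required.
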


		With the above lemma,	we are ready to present  the stability and convergence  of the difference scheme \eqref{v-order-scheme}-\eqref{v-order-scheme-bc}.

		\begin{proof}
			Denote $U=(u_1,u_2,\cdots, u_{N-1})^T$ and $F=(f_1,f_2,\cdots, f_{N-1})^T$. Then the difference scheme can be rewritten as the matrix form $S^{(\mathbf{\alpha})} U=F $. By Lemma \eqref{lem-properties-coefficients}, we know that the matrix $S^{(\mathbf{\alpha})}$ is diagonally dominant and monotone matrix. Thus $S^{(\mathbf{\alpha})}$ is not singular matrix and we can write $ U= (S^{(\mathbf{\alpha})})^{-1} F $.  Thus $\|U\|_{l^\infty} \leq  \|(S^{( \mathbf{\alpha})})^{-1} \|_{l^\infty}  \|F\|_{l^\infty}$. 
			Take the constant vector $v=(1,1,\cdots,1)^T$. Then 
			\[ (S^{(\mathbf{\alpha})} v)_j= \frac{1}{h^{\alpha_j}} \sum_{ n=j-1}^{j-1+N} a_n^{(\alpha_j)} 
			\geq  -  \frac{1}{h^{\alpha_j}} \sum_{ n\geq N} a_n^{(\alpha_j)}  \geq  c_3   \frac{1}{h^{\alpha_j}} \cdot \frac{1}{N^{\alpha_j}} = c_3.
			\]
			By Lemma  \ref{lem:barrier}, we have  $ 	\|u\|_{L_h^\infty}=\|U\|_{l^\infty} \leq  c \|F\|_{l^\infty}= c\|f\|_{L_h^\infty}$. This completes proof.  
		\end{proof}

		Combining with the consistent error described in Theorem 	\ref{thm-approximation-property}, we can readily obtain the convergence.

  \iffalse
			\begin{rem}
			 The argument here in 1D may be extended to multi-dimensional case.  We leave the analysis for our future research.   
		\end{rem}
	\fi

%%%%%%%%%%%%%%%%%%%%%%%%%%%%%%%%%%%%%%%%%%%%%%%%%%%%%%%%%%%%%%%%%%%%%%%%%%%%%%%%%%%%%%%%%%%%%%%%

	%%%%%%%%%%%%%%%%%%%%%%%%%%%%%%%%%%%%%%%%%%%%%%%%%%%%%%%%%%%%%%%%%%%%%%%%%%%%%%%%%%%%%%%%%%%%%%%%
		\section{Conclusion}\label{sec-conclusion}
		In this work, we considered the numerical evaluation of the variable-order fractional Laplacian particularly in 1D, 2D and 3D. 
		Based on the generating function theory on the  central finite difference schemes, we  derived an efficient finite difference method for the fractional Laplacian of the variable-order. %, which is often used in many applications such as in the spatial statistics to generate random Gaussian fields.   
  We developed a fast solver with the quasi-optimal complexity for the computation of discrete operators and numerical solution for the relevant nonlocal PDEs with variable-order fractional Laplacian. 
  We  discussed the implementation of the proposed scheme and reported numerical results to illustrate the accuracy and efficiency of our method and verify our theoretical predictions.   
 In this work,  we only provide the stability and convergence analysis in 1D and we hope to address the analysis in 2D and 3D in our future work.

%%================================
%%================================
	
		\section*{Acknowledgement}
		Z. Hao would like to thank Professor Yanzhi Zhang from Missouri University of Science and Technology for stimulating discussions for this work, and the support of a start-up grant from Southeast University in China.  R. Du would like to thank the support by the Natural Science Foundation of Jiangsu Province (Grant No. BK20221450).
%%================================
		\bibliographystyle{siam}
		\bibliography{v_frac_lap_ref} 

\def\cprime{$'$} \def\cprime{$'$}
\begin{thebibliography}{10}

\bibitem{AnisworthG17}
{\sc M.~Ainsworth and C.~Glusa}, {\em Aspects of an adaptive finite element
  method for the fractional {L}aplacian: a priori and a posteriori error
  estimates, efficient implementation and multigrid solver}, Comput. Methods
  Appl. Mech. Engrg., 327 (2017), pp.~4--35.

\bibitem{Antil-Sinc-2021}
{\sc H.~Antil, P.~Dondl, and L.~Striet}, {\em Approximation of integral
  fractional {L}aplacian and fractional {PDE}s via sinc-basis}, SIAM J. Sci.
  Comput., 43 (2021), pp.~A2897--A2922.

\bibitem{Antil-2023-Sinc}
{\sc H.~Antil, P.~W. Dondl, and L.~Striet}, {\em Analysis of a sinc-galerkin
  method for the fractional laplacian}, SIAM J. Numer. Anal., 61 (2023),
  pp.~2967--2993.

\bibitem{Antil-Rautenberg-2019}
{\sc H.~Antil and C.~N. Rautenberg}, {\em Sobolev spaces with non-{M}uckenhoupt
  weights, fractional elliptic operators, and applications}, SIAM J. Math.
  Anal., 51 (2019), pp.~2479--2503.

\bibitem{bucur2016nonlocal}
{\sc C.~Bucur and E.~Valdinoci}, {\em Nonlocal diffusion and applications},
  Springer, 2016.

\bibitem{Burkard-Wu-Zhang-2021}
{\sc J.~Burkardt, Y.~Wu, and Y.~Zhang}, {\em A unified meshfree pseudospectral
  method for solving both classical and fractional {PDE}s}, SIAM J. Sci.
  Comput., 43 (2021), pp.~A1389--A1411.

\bibitem{Darve-DElia-Garrappa-2022}
{\sc E.~Darve, M.~D'Elia, R.~Garrappa, A.~Giusti, and N.~L. Rubio}, {\em On the
  fractional {L}aplacian of variable order}, Fract. Calc. Appl. Anal., 25
  (2022), pp.~15--28.

\bibitem{Marta-Glusa-2022}
{\sc M.~D'Elia and C.~Glusa}, {\em A fractional model for anomalous diffusion
  with increased variability: Analysis, algorithms and applications to
  interface problems}, Numer. Methods Partial Differential Equations, 38
  (2022), pp.~2084--2103.

\bibitem{Duo-Zhang-2019}
{\sc S.~Duo and Y.~Zhang}, {\em Accurate numerical methods for two and three
  dimensional integral fractional {L}aplacian with applications}, Comput.
  Methods Appl. Mech. Engrg., 355 (2019), pp.~639--662.

\bibitem{EMW2022}
{\sc W.~E, C.~Ma, and L.~Wu}, {\em The {B}arron space and the flow-induced
  function spaces for neural network models}, Constr. Approx., 55 (2022),
  pp.~369--406.

\bibitem{farquhar2018computational}
{\sc M.~E. Farquhar, T.~J. Moroney, Q.~Yang, I.~W. Turner, and K.~Burrage},
  {\em Computational modelling of cardiac ischaemia using a variable-order
  fractional {L}aplacian}, arXiv preprint arXiv:1809.07936,  (2018).

\bibitem{Feist-Bebendorf-2022}
{\sc B.~Feist and M.~Bebendorf}, {\em Quadrature rules for singular double
  integrals in {3D}}, arXiv:2208.05714,  (2022).

\bibitem{Felsinger-KV-2015}
{\sc M.~Felsinger, M.~Kassmann, and P.~Voigt}, {\em The {D}irichlet problem for
  nonlocal operators}, Math. Z., 279 (2015), pp.~779--809.

\bibitem{Fukushima-Uemura-2012}
{\sc M.~Fukushima and T.~Uemura}, {\em Jump-type {H}unt processes generated by
  lower bounded semi-{D}irichlet forms}, Ann. Probab., 40 (2012), pp.~858--889.

\bibitem{Gatto-Hesthaven2015}
{\sc P.~Gatto and J.~S. Hesthaven}, {\em Numerical approximation of the
  fractional {L}aplacian via {$hp$}-finite elements, with an application to
  image denoising}, J. Sci. Comput., 65 (2015), pp.~249--270.

\bibitem{Giusti-2020}
{\sc A.~Giusti, R.~Garrappa, and G.~Vachon}, {\em On the {K}uzmin model in
  fractional {N}ewtonian gravity}, Eur. Phys. J. Plus, 135 (2020).

\bibitem{gradshteyn2014table}
{\sc I.~S. Gradshteyn and I.~M. Ryzhik}, {\em Table of integrals, series, and
  products}, Academic press, 2014.

\bibitem{Gunzburger-18}
{\sc M.~Gunzburger, N.~Jiang, and F.~Xu}, {\em Analysis and approximation of a
  fractional {L}aplacian-based closure model for turbulent flows and its
  connection to {R}ichardson pair dispersion}, Comput. Math. Appl., 75 (2018),
  pp.~1973--2001.

\bibitem{Hao-Li-Zhang-Zhang-2021}
{\sc Z.~Hao, H.~Li, Z.~Zhang, and Z.~Zhang}, {\em Sharp error estimates of a
  spectral {G}alerkin method for a diffusion-reaction equation with integral
  fractional {L}aplacian on a disk}, Math. Comp., 90 (2021), pp.~2107--2135.

\bibitem{Hao-Zhang-Du-2021}
{\sc Z.~Hao, Z.~Zhang, and R.~Du}, {\em Fractional centered difference scheme
  for high-dimensional integral fractional {L}aplacian}, J. Comput. Phys., 424
  (2021), p.~109851.

\bibitem{huang2016finite}
{\sc Y.~Huang and A.~Oberman}, {\em Finite difference methods for fractional
  laplacians}, arXiv preprint arXiv:1611.00164,  (2016).

\bibitem{KLOKOV198793}
{\sc Y.~Klokov and A.~Shkerstena}, {\em Error estimates for lagrange-chebyshev
  interpolation formulae}, USSR Computational Mathematics and Mathematical
  Physics, 27 (1987), pp.~93--95.

\bibitem{Laskin2000}
{\sc N.~Laskin}, {\em Fractional quantum mechanics and {L}\'{e}vy path
  integrals}, Phys. Lett. A, 268 (2000), pp.~298--305.

\bibitem{lenzi2016anomalous}
{\sc E.~Lenzi, H.~Ribeiro, A.~Tateishi, R.~Zola, and L.~Evangelista}, {\em
  Anomalous diffusion and transport in heterogeneous systems separated by a
  membrane}, Proceedings of the Royal Society A: Mathematical, Physical and
  Engineering Sciences, 472 (2016), p.~20160502.

\bibitem{Liu-Sun-Zhang-2019}
{\sc X.~Liu, H.~Sun, Y.~Zhang, C.~Zheng, and Z.~Yu}, {\em Simulating
  multi-dimensional anomalous diffusion in nonstationary media using
  variable-order vector fractional-derivative models with kansa solver},
  Advances in Water Resources, 133 (2019), p.~103423.

\bibitem{MengM2022}
{\sc Y.~Meng and P.~Ming}, {\em A new function space from {B}arron class and
  application to neural network approximation}, Commun. Comput. Phys., 32
  (2022), pp.~1361--1400.

\bibitem{Minden-Ying-2020}
{\sc V.~Minden and L.~Ying}, {\em A simple solver for the fractional
  {L}aplacian in multiple dimensions}, SIAM J. Sci. Comput., 42 (2020),
  pp.~A878--A900.

\bibitem{MuHWZ2021}
{\sc X.~Mu, J.~Huang, L.~Wen, and S.~Zhuang}, {\em Modeling viscoacoustic wave
  propagation using a new spatial variable-order fractional laplacian wave
  equation}, Geophysics, 86 (2021), p.~T487–T507.

\bibitem{Ok2023}
{\sc J.~Ok}, {\em Local {H}\"{o}lder regularity for nonlocal equations with
  variable powers}, Calc. Var. Partial Differential Equations, 62 (2023),
  pp.~1--31.

\bibitem{Pang-Sun-2021}
{\sc H.-K. Pang and H.-W. Sun}, {\em A fast algorithm for the variable-order
  spatial fractional advection-diffusion equation}, J. Sci. Comput., 87 (2021),
  pp.~15--28.

\bibitem{Pinsky-2002}
{\sc M.~A. Pinsky}, {\em Introduction to {F}ourier analysis and wavelets},
  vol.~102 of Graduate Studies in Mathematics, American Mathematical Society,
  2009.

\bibitem{Ruiz-MedinaAA2004}
{\sc M.~D. Ruiz-Medina, V.~V. Anh, and J.~M. Angulo}, {\em Fractional
  generalized random fields of variable order}, Stochastic Analysis and
  Applications, 22 (2004), pp.~775--799.

\bibitem{Sheng-2020-SINUM}
{\sc C.~Sheng, J.~Shen, T.~Tang, L.-L. Wang, and H.~Yuan}, {\em Fast
  {F}ourier-like mapped {C}hebyshev spectral-{G}alerkin methods for {PDE}s with
  integral fractional {L}aplacian in unbounded domains}, SIAM J. Numer. Anal.,
  58 (2020), pp.~2435--2464.

\bibitem{Sheng-2023}
{\sc C.~Sheng, L.~Wang, H.~C. Chen, and H.~Li}, {\em Fast implementation of
  {FEM} for integral fractional {L}aplacian on rectangular meshes}, Commun.
  Comput. Phys.,  (2023).

\bibitem{Silvestre-2005}
{\sc L.~Silvestre}, {\em H\"{o}lder estimates for solutions of
  integro-differential equations like the fractional {L}aplace}, Indiana Univ.
  Math. J., 55 (2006), pp.~1155--1174.

\bibitem{Sun-Gao-2020}
{\sc Z.-z. Sun and G.-h. Gao}, {\em Fractional differential equations--finite
  difference methods}, De Gruyter, Berlin $\&$ Science Press, Beijing, 2020.

\bibitem{Tang-Wang-Yuan-Zhou-2020}
{\sc T.~Tang, L.-L. Wang, H.~Yuan, and T.~Zhou}, {\em Rational spectral methods
  for {PDE}s involving fractional {L}aplacian in unbounded domains}, SIAM J.
  Sci. Comput., 42 (2020), pp.~A585--A611.

\bibitem{tang2018hermite}
{\sc T.~Tang, H.~Yuan, and T.~Zhou}, {\em Hermite spectral collocation methods
  for fractional {PDE}s in unbounded domains}, Commun. Comput. Phys., 24
  (2018), pp.~1143--1168.

\bibitem{VdV92}
{\sc H.~A. van~der Vorst}, {\em {Bi-CGSTAB}: A fast and smoothly converging
  variant of bi-cg for the solution of nonsymmetric linear systems}, SIAM J.
  Sci. Stat. Comp., 13 (1992), pp.~631--644.

\bibitem{WangSLL2023}
{\sc Q.-Y. Wang, Z.-H. She, C.-X. Lao, and F.-R. Lin}, {\em Fractional centered
  difference schemes and banded preconditioners for nonlinear riesz space
  variable-order fractional diffusion equations}, Numer. Algorithms,  (2023).

\bibitem{Wang-Hao-Du-2022}
{\sc Y.~Wang, Z.~Hao, and R.~Du}, {\em A linear finite difference scheme for
  the two-dimensional nonlinear {S}chr\"{o}dinger equation with fractional
  {L}aplacian}, J. Sci. Comput., 90 (2022), pp.~Paper No. 24, 27.

\bibitem{Woyczynski2001}
{\sc W.~A. Woyczy{\'{n}}ski}, {\em L{\'e}vy Processes in the Physical
  Sciences}, Birkh{\"a}user Boston, Boston, MA, 2001, pp.~241--266.

\bibitem{Xu-Darve-2020}
{\sc K.~Xu and E.~Darve}, {\em Isogeometric collocation method for the
  fractional {L}aplacian in the 2{D} bounded domain}, Comput. Methods Appl.
  Mech. Engrg., 364 (2020), p.~112936.

\bibitem{YuZZZ-2022}
{\sc B.~Yu, X.~Zheng, P.~Zhang, and L.~Zhang}, {\em Computing solution
  landscape of nonlinear space-fractional problems via fast approximation
  algorithm}, J. Comput. Phys., 468 (2022), p.~111513.

\bibitem{Zheng-Wang-2020}
{\sc X.~Zheng and H.~Wang}, {\em An optimal-order numerical approximation to
  variable-order space-fractional diffusion equations on uniform or graded
  meshes}, SIAM J. Numer. Anal., 58 (2020), pp.~330--352.

\bibitem{ZhuangLAT2009}
{\sc P.~Zhuang, F.~Liu, V.~Anh, and I.~Turner}, {\em Numerical methods for the
  variable-order fractional advection-diffusion equation with a nonlinear
  source term}, SIAM J. Numer. Anal., 47 (2009), pp.~1760--1781.

\end{thebibliography}
%%================================

  \appendix

%%%%%%%%%%%%%%%%%%%%%%%%%%%%%%%%%%%%%%%%%%%%%%%%%%%%%%%%%%%%%%%%%%%%%%%%%%%%%%%%%%%%%%%%%%%%%%%%%%%%%%%%%%%%%%%%%%%%%%%%%%

		\section{
			Calculation of variable-order fractional Laplacian of the Gaussian function }
	We follow the similar argument for the constant case  as in   \cite{Sheng-2020-SINUM} to calculate the variable-order fractional Laplacian of the Gaussian function. Note that
		$$
		\begin{aligned}
			\mathcal{F}\left\{e^{-|x|^2}\right\}(\xi) & =\frac{1}{(2 \pi)^{d / 2}} \int_{\mathbb{R}^d} e^{-|x|^2} e^{-\mathbf{i} x \cdot \xi} \text{d} x %\\
			%& =\frac{1}{(2 \pi)^{d / 2}} \int_{\mathbb{R}} e^{-x_1^2} e^{-\mathbf{i} x_1 \xi_1} \text{~d} x_1 \cdots \int_{\mathbb{R}} e^{-x_d^2} e^{-\mathbf{i} x_d \xi_d} \text{~d} x_d
   =\frac{1}{2^{d / 2}} e^{-\frac{|\xi|^2}{4}},
		\end{aligned}
		$$
		where we used the identity (cf. \cite{gradshteyn2014table}, P. 339]):
		$$
		\int_{\mathbb{R}} e^{-x^2} e^{-\mathrm{i} x \xi} \mathrm{d} x=\sqrt{\pi} e^{-\frac{\xi^2}{4}}.
		$$	
		Thus from the equivalence definition \eqref{def-v-frac-lap}, we obtain
		\begin{equation}
			\begin{aligned}
				(-\Delta)^{\alpha(x)/2}\left\{e^{-|x|^2}\right\}(x) & =\mathcal{F}^{-1}\left\{|\xi|^{\alpha(x)}\mathcal{F}\{e^{-|x|^2}\}(\xi)\right\} = \frac{1}{2^{d/2}(2\pi)^{d/2}}\int_{\mathbb{R}}|\xi|^{\alpha(x)}e^{-\frac{|\xi|^2}{4}}e^{\mathbf{i}x\xi}\text{d}\xi\\
				& =\frac{2^d}{2^{d / 2}(2 \pi)^{d / 2}} \int_{\mathbb{R}_{+}^d}|\xi|^{ \alpha(x)} e^{-\frac{|\xi|^2}{4}} \cos \left(x_1 \xi_1\right) \cos \left(x_2 \xi_2\right) \cdots \cos \left(x_d \xi_d\right) \mathrm{d} \xi,
			\end{aligned}
		\end{equation}
		We proceed with the calculation by using the $d$-dimensional spherical coordinates:
		\begin{eqnarray}
			\begin{aligned}
				& \xi_1=r \cos \theta_1 ; \xi_2=r \sin \theta_1 \cos \theta_2 ; \cdots \cdots ; \xi_{d-1}=r \sin \theta_1 \cdots \sin \theta_{d-2} \cos \theta_{d-1}; \\
				& \xi_d=r \sin \theta_1 \cdots \sin \theta_{d-2} \sin \theta_{d-1}, \quad r=|\xi|,
			\end{aligned}
		\end{eqnarray}
	Then we can write
		\begin{eqnarray}
			(-\Delta)^{\alpha(x)/2}\left\{e^{-|x|^2}\right\}(x)=\frac{1}{\pi^{d / 2}} \int_0^{\infty} r^{\alpha(x)+d-1} e^{-\frac{r^2}{4}} \mathcal{I}(r ; x) \mathrm{d} r,
		\end{eqnarray}
		where
		$$
		\begin{gathered}
			\mathcal{I}(r ; x)=\int_{\left[0, \frac{\pi}{2}\right]^{d-1}} \cos \left(r x_1 \cos \theta_1\right) \cos \left(r x_2 \sin \theta_1 \cos \theta_2\right) \cdots \cos \left(r x_{d-1} \sin \theta_1 \cdots \sin \theta_{d-2} \cos \theta_{d-1}\right) \\
			\cos \left(r x_d \sin \theta_1 \cdots \sin \theta_{d-2} \sin \theta_{d-1}\right)\left(\sin \theta_1\right)^{d-2}\left(\sin \theta_2\right)^{d-3} \cdots\left(\sin \theta_{d-2}\right) \mathrm{d} \theta_1 \mathrm{~d} \theta_2 \cdots \mathrm{d} \theta_{d-1} .
		\end{gathered}
		$$	
		We first integrate $\mathcal{I}(r ; x)$ with respect to $\theta_{d-1}$. To do this, we recall the integral formula involving the Bessel functions (cf. \cite{gradshteyn2014table}, P. 732]): for real $\mu, \nu>-1$ and $a, b>0$,
		\begin{eqnarray}
			\int_0^{\frac{\pi}{2}} J_\nu(a \sin \theta) J_\mu(b \cos \theta) \sin ^{\nu+1} \theta \cos ^{\mu+1} \theta \mathrm{d} \theta=\frac{a^\nu b^\mu J_{\nu+\mu+1}\left(\sqrt{a^2+b^2}\right)}{\left(a^2+b^2\right)^{(\nu+\mu+1) / 2}}.
		\end{eqnarray}	
		Then using the identity $\cos z=\sqrt{\pi z / 2} J_{-1 / 2}(z)$ and (A.4) (with $a=r x_{d-1} \sin \theta_1 \cdots \sin \theta_{d-2}, b=$ $r x_d \sin \theta_1 \cdots \sin \theta_{d-2}$ and $\left.\mu=\nu=-1 / 2\right)$, we derive
		$$
		\begin{gathered}
			\int_0^{\frac{\pi}{2}} \cos \left(r x_{d-1} \sin \theta_1 \cdots \sin \theta_{d-2} \cos \theta_{d-1}\right) \cos \left(r x_d \sin \theta_1 \cdots \sin \theta_{d-2} \sin \theta_{d-1}\right) \mathrm{d} \theta_{d-1} \\
			=\frac{\pi}{2} J_0\left(r \sin \theta_1 \cdots \sin \theta_{d-2} \sqrt{x_{d-1}^2+x_d^2}\right) .
		\end{gathered}
		$$
		Substituting the above into $\mathcal{I}(r, x)$, and applying the same argument to $\theta_{d-2}, \theta_{d-3}, \cdots, \theta_1$ iteratively $d-2$ times, we obtain
		\begin{eqnarray}
			\mathcal{I}(r ; x)=\left(\frac{\pi}{2}\right)^{\frac{d}{2}}(r|x|)^{1-\frac{d}{2}} J_{\frac{d}{2}-1}(r|x|) .
		\end{eqnarray}	
		We proceed with the integral identity (cf. \cite{gradshteyn2014table}, P. 713]): for real $\mu+\nu>-1$ and $p>0$,
		\begin{eqnarray}
			\int_{\mathbb{R}^{+}} J_\mu(b t) e^{-p^2 t^2} t^{\nu-1} \mathrm{~d} t=\frac{b^\mu \Gamma((\mu+\nu) / 2)}{2^{\mu+1} p^{\nu+\mu} \Gamma(\mu+1)}{ }_1 F_1\left(\frac{\mu+\nu}{2} ; \mu+1 ;-\frac{b^2}{4 p^2}\right) .
		\end{eqnarray}	
		Then, substituting (A.5) into (A.3) and using (A.6) (with $\mu=d / 2-1$ and $\nu=\alpha(x)+d / 2+1$ ), we derive
		$$
		\begin{aligned}
			(-\Delta)^{\alpha(x)/2}\left\{e^{-|x|^2}\right\}&=\frac{|x|^{1-\frac{d}{2}}}{2^{d / 2}} \int_0^{\infty} r^{\alpha(x)+\frac{d}{2}} e^{-\frac{r^2}{4}} J_{\frac{d}{2}-1}(r|x|) \mathrm{d} r\\
			&=\frac{2^{\alpha(x)} \Gamma((\alpha(x)+d) / 2)}{\Gamma(d / 2)}{ }_1 F_1\left(\frac{\alpha(x)+d}{2} ; \frac{d}{2} ;-|x|^2\right) . 
		\end{aligned}
		$$	
		Then \eqref{exact_solution} follows. This completes the proof.

	\end{document}